\def\Xint#1{\mathchoice {\XXint\displaystyle\textstyle{#1}}%
  {\XXint\textstyle\scriptstyle{#1}}%
  {\XXint\scriptstyle\scriptscriptstyle{#1}}%
  {\XXint\scriptscriptstyle\scriptscriptstyle{#1}}%
  \!\int} \def\XXint#1#2#3{{\setbox0=\hbox{$#1{#2#3}{\int}$}
    \vcenter{\hbox{$#2#3$}}\kern-.5\wd0}} \def\dashint{\Xint-}
\newcommand{\R}{\mathbb{R}} \newcommand{\Z}{\mathbb{Z}}
\newcommand{\T}{\mathbb{T}} \newcommand{\C}{\mathbb{C}}
\newcommand{\N}{\mathbb{N}}
\theoremstyle{plain} \newtheorem{theorem}{Theorem}[section]
\newtheorem{lemma}[theorem]{Lemma}
\newtheorem{corollary}[theorem]{Corollary}
\newtheorem{prop}[theorem]{Proposition}
\theoremstyle{definition} \newtheorem{definition}[theorem]{Definition}
\theoremstyle{remark} \newtheorem{remark}{Remark}
\DeclareMathOperator{\supp}{supp} \DeclareMathOperator{\sign}{sign}
 \DeclareMathOperator{\Imag}{Im}
\newcommand{\test}{\mathcal{S}(\R \times \T)}
\newcommand{\eps}{\varepsilon} \newcommand{\lb}{\langle}
\newcommand{\rb}{\rangle}
\newcommand{\ls}{\lesssim}\newcommand{\gs}{\gtrsim}
\begin{document}

\author[A.~Gr{\"u}nrock]{Axel~Gr{\"u}nrock}
\author[S.~Herr]{Sebastian~Herr} \title[Low regularity LWP of the DNLS
with periodic initial data] {Low regularity local well-posedness of
  the Derivative Nonlinear Schr\"odinger Equation with periodic
  initial data}

\subjclass[2000]{35Q55}

\address{Axel~Gr{\"u}nrock: Bergische Universit\"at Wuppertal,
  Fachbereich C: Mathematik / Naturwissenschaften, Gau{\ss}stra{\ss}e
  20, 42097 Wuppertal, Germany.}
\email{axel.gruenrock@math.uni-wuppertal.de} \address{Sebastian~Herr:
  Rheinische Friedrich-Wilhelms-Universit\"at Bonn, Mathematisches
  Institut, Beringstra{\ss}e 1, 53115 Bonn, Germany.}

\email{herr@math.uni-bonn.de}

\begin{abstract}
  The Cauchy problem for the derivative nonlinear Schr\"odinger
  equation with periodic boundary condition is considered. Local
  well-posedness for data $u_0$ in the space
  $\widehat{H}^{s}_{r}(\T)$, defined by the norms
  $$
  \|u_0\|_{\widehat{H}^{s}_{r}(\T)} =
  \|\lb\xi\rb^s\widehat{u}_0\|_{\ell^{r'}_{\xi}} \, ,
  $$
  is shown in the parameter range $s\ge \frac{1}{2}$, $2>r>\frac{4}{3}$.
  The proof is based on an adaptation of the gauge
  transform to the periodic setting and an appropriate variant of the
  Fourier restriction norm method.
\end{abstract}
\keywords{local well-posedness -- derivative nonlinear Schr\"odinger
  equation -- periodic functions -- sharp multilinear estimates --
  gauge transformation -- generalized Fourier restriction norm method}
\maketitle
\section{Introduction and main result}\label{sect:intro_main}
\noindent
The Cauchy problem for the derivative nonlinear Schr\"odinger equation
\begin{equation}\label{eq:dnls}
  \tag{DNLS}
  \begin{split}
    i\partial_t u + \partial_x^2 u&=i\partial_x(|u|^2u) \\
    u(0,x)&=u_0 (x)
  \end{split}
\end{equation}
with data $u_0$ in the classical Sobolev spaces $H^s(\R)$ of functions
defined on the real line is known to be locally well-posed for $s \geq
\frac{1}{2}$. This was shown by Takaoka in \cite{Tak99}, where he
improved the earlier $H^1(\R)$-result of Hayashi and Ozawa
\cite{HO92,Hay93,HO94}. His method of proof combines the gauge
transform already used by Hayashi and Ozawa with Bourgain's Fourier
restriction norm method. A counterexample of Biagioni and Linares
\cite{BL01} shows the optimality of Takaoka's result on the
$H^s(\R)$-scale of data spaces: For $s<\frac{1}{2}$ the Cauchy problem
\eqref{eq:dnls} is ill-posed in the $C^0$-uniform sense, although the
standard scaling argument suggests local well-posedness for $s>0$.
This gap of $1/2$ derivative between the scaling prediction and
Takaoka's result can be closed by leaving the $H^s(\R)$-scale and
considering data in the spaces $\widehat{H}^{s}_{r}(\R)$ defined by
the norms
\begin{equation*}
  \|u_0\|_{\widehat{H}^{s}_{r}(\R)} = \|\lb\xi\rb^s\widehat{u}_0\|_{L^{r'}_{\xi}},
  \qquad
  \lb\xi\rb=(1+\xi^2)^{\frac{1}{2}},
  \qquad \frac{1}{r}+\frac{1}{r'}=1.
\end{equation*}
We remark that these spaces coincide with $B_{r',k}$ (with weight
$k(\xi)=\lb\xi\rb^s$) introduced by H\"ormander, cf. \cite{Hoe83},
Section 10.1. The idea to consider them as data spaces for nonlinear
Schr\"odinger equations goes back to the work of Cazenave, Vega, and
Vilela \cite{CVV01}, where corresponding weak norms are
used. Yet another alternative class of data spaces has been considered
by Vargas and Vega in \cite{VV01}.

\medskip

Concerning the \eqref{eq:dnls} equation on the real line, it was shown
by the first author in \cite{G05}, that local well-posedness holds for
data in $\widehat{H}^{s}_{r}(\R)$, provided $s \geq \frac{1}{2}$ and $2
\geq r > 1$. This generalization of Takaoka's result almost reaches the
critical case, which is $(s,r)=(\frac{1}{2},1)$ in this setting. The
proof uses the gauge transform again and an appropriate variant of the
Fourier restriction norm method, which was developed in \cite{G03}.
Furthermore, it relies heavily on certain smoothing properties of the
Schr\"odinger group, expressed in terms of bi- and trilinear estimates
for free solutions.

\medskip

On the other hand it could be shown by the second author in
\cite{H06}, that Takaoka's result concerning the real line can be
carried over to the periodic case \emph{with the same lower bound} $s
\geq \frac{1}{2}$ on the Sobolev regularity. This is
remarkable, since there is a number of nonlinear Schr\"odinger and
Korteweg - de Vries type equations, which are -- due to a lack of smoothing properties
-- strictly worse behaved in the periodic setting than in the
continuous case. To prove the result concerning the one-dimensional
torus, the gauge transform had to be adjusted to the periodic case,
see Section 2 of \cite{H06}. The transformed equation is then treated
by the Fourier restriction norm method. Here, the $L^4$ Strichartz
estimate \cite{Zy74, Bo93a} turned out to be a central tool in
the derivation of the nonlinear estimates.

\medskip

Now it is natural to ask for a synthesis of the two last-mentioned
results, i. e., to consider the Cauchy problem \eqref{eq:dnls} with
$u_0$ in the following two parameter scale of data spaces.

\begin{definition}\label{def:sob}
  Let $s\in \R$, $1 \leq r \leq \infty$ and
  $\frac{1}{r}+\frac{1}{r'}=1$. Define $\widehat{H}^{s}_{r}(\T)$ as
  the completion of all trigonometric polynomials with respect to the
  norm
  \begin{equation}\label{eq:sob}
    \|f\|_{\widehat{H}^{s}_{r}}:=\|\widehat{J^sf}\|_{\ell^{r'}_{\xi}},
  \end{equation}
  where $J^s$ is the Bessel potential operator of order $-s$ given by
  $\widehat{J^s f}(\xi)=\lb\xi\rb^s \widehat{f}(\xi)$.
\end{definition}
\begin{remark}\label{rem:prop_sob}
  In contrast to the non-periodic case the continuous embedding
  $\widehat{H}^{s}_{q}(\T)\subset \widehat{H}^{s}_{r}(\T)$ holds true
  for any $1\leq r \leq q \leq \infty$. Moreover, we have
  $H^s_2(\T)=\widehat{H}^{s}_{2}(\T)$ and more generally
  $H^{s}_r(\T)\subset \widehat{H}^{s}_r(\T)$ for $1\leq r\leq 2$ by
  Hausdorff-Young, where $H^{s}_r(\T)$ denotes the Bessel potential
  space of all $u$ such that $J^s u \in L^{r}(\T)$. If $r=2$ we will
  usually omit the index $r$.
\end{remark}
The main result of this paper is local well-posedness of
\eqref{eq:dnls} in these data spaces in the parameter range $s\geq
\frac{1}{2}$ and $2>r>\frac{4}{3}$. More precisely, the following
theorem will be shown.
\begin{theorem}\label{thm:main}
  Let $\frac{4}{3}<q \leq r\leq 2$. For every
  \begin{equation*}
    u_0\in B_R:=\{ u_0 \in \widehat{H}^{\frac{1}{2}}_{r}(\T)\mid
    \|u_0\|_{\widehat{H}^{\frac{1}{2}}_{q}}< R\}
  \end{equation*}
  and $T \ls R^{-2q'-}$ there exists a solution $u \in
  C([-T,T],\widehat{H}^{\frac{1}{2}}_{r}(\T))$ of the Cauchy problem
  \eqref{eq:dnls}. This solution is the unique limit of smooth
  solutions and the map
  \begin{equation*}
    \left(B_R, \|\cdot\|_{\widehat{H}^{\frac{1}{2}}_{r}}\right)
    \longrightarrow C([-T,T],\widehat{H}^{\frac{1}{2}}_{r}(\T)):
    \quad u_0 \mapsto u
  \end{equation*}
  is continuous but not locally uniformly continuous. However, on
  subsets of $B_R$ with fixed $L^2$ norm it is locally Lipschitz
  continuous.
\end{theorem}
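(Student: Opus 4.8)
The plan is to follow the gauge-transform strategy of \cite{H06}, combined with the generalized Fourier restriction norm method of \cite{G03, G05} adapted to the $\widehat{H}^{s}_{r}$-scale. First I would introduce the periodic gauge transform $\mathcal{G}$, which for a function $u$ with mean value $\mu$ of $|u(t,\cdot)|^2$ is given schematically by
$$
\mathcal{G}(u)(t,x) = \exp\Big(-i\int_0^x \big(|u(t,y)|^2 - \mu\big)\,dy\Big)\,u(t,x),
$$
and then derive the equation satisfied by $v = \mathcal{G}(u)$. The purpose of this substitution is to eliminate the worst nonlinear interaction $|u|^2\partial_x u$, which is responsible for the loss of one derivative; the gauged equation then carries a cubic term of the form $v^2\partial_x\bar v$, in which the derivative now falls on the \emph{conjugated} factor, together with a quintic term of type $|v|^4 v$ and lower-order corrections depending on $\mu$. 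Since DNLS conserves the $L^2$ norm, $\mu$ is constant in time, so these corrections are benign.

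Next I would set up the resolution spaces. As $\|u_0\|_{\widehat{H}^{1/2}_{r}}$ measures $\lb\xi\rb^{1/2}\widehat{u_0}$ in $\ell^{r'}_\xi$, the natural space is a Bourgain-type space $X^{1/2,b}_r$ with norm $\|\lb\xi\rb^{1/2}\lb\tau+\xi^2\rb^{b}\,\widetilde{u}\|_{\ell^{r'}_\xi L^{r'}_\tau}$ for a suitable $b>\tfrac{1}{r'}$, ensuring embedding into $C([-T,T],\widehat{H}^{1/2}_r(\T))$ and that the time-localized Duhamel operator contracts. The heart of the matter is to prove multilinear estimates bounding the gauged nonlinearity in $X^{1/2,b-1}_r$ by products of $X^{1/2,b}_r$-norms, gaining a power of $T$ on short intervals. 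Here the periodic $L^4$ Strichartz estimate \cite{Zy74, Bo93a}, transferred and interpolated into the $\widehat{H}^{s}_{r}$-setting, is the central tool: it furnishes the bilinear smoothing needed to absorb the derivative in the cubic term and to control the $\ell^{r'}$-sums. The two-parameter structure $q\le r$ is exploited through the embedding $\widehat{H}^{1/2}_{q}\subset\widehat{H}^{1/2}_{r}$ of Remark~\ref{rem:prop_sob}: the stronger $\widehat{H}^{1/2}_q$-bound on $B_R$ yields the quantitative control producing the lifespan $T \ls R^{-2q'-}$, while the contraction itself is run in the larger space indexed by $r$.

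The \textbf{main obstacle} I expect is the multilinear estimate for the cubic term $v^2\partial_x\bar v$ at the endpoint $s=\tfrac{1}{2}$ as $r\to\tfrac{4}{3}$. On the torus there is no gain from genuine transversality as on $\R$, so the missing derivative must be recovered purely from the modulation weight $\lb\tau+\xi^2\rb$ via the resonance relation, which expresses the sum of the four modulations as a product of frequency differences, in combination with the $L^4$ bound; balancing the frequency-localized $\ell^{r'}$-sums against this gain is precisely where the restriction $r>\tfrac{4}{3}$ enters and where the near-resonant configurations must be treated separately. Once the contraction is closed and a gauged solution $v$ is obtained, I would transfer back through the inverse gauge, verify persistence of regularity and approximation by smooth solutions, and establish continuity of the data-to-solution map. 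The failure of \emph{uniform} continuity originates in the phase factor of $\mathcal{G}$, which is not uniformly continuous in the data; conversely, on the slices of $B_R$ with fixed $L^2$-norm the parameter $\mu$ and the gauge phase depend Lipschitz-continuously on $u_0$, upgrading the map to locally Lipschitz there.
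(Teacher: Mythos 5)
Your skeleton (gauge transform, restriction-norm spaces adapted to the $\widehat{H}^s_r$ scale, contraction in an $r$-based space with lifespan read off from the $q$-norm) matches the paper's, but two of your central working assumptions fail precisely at the point where the difficulty of the theorem sits. The first and most serious gap: the correction terms produced by the gauge transform are \emph{not} benign lower-order terms depending on the conserved mean $\mu$. The transformed cubic nonlinearity is $\mathcal{T}(v)=v^2\partial_x\bar v-2i\dashint_0^{2\pi}\Imag v\,\partial_x\bar v\,dx\;v$, and the subtracted nonlocal term is exactly what deletes the resonant frequency interactions $\xi_1=\xi$ and $\xi_2=\xi$ from the Fourier-side convolution, leaving the restricted sum in \eqref{eq:mod_tri}. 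For $r<2$ the trilinear estimate for the \emph{uncorrected} term $v^2\partial_x\bar v$ is false: the argument breaks down completely without the restrictions $\xi_1\neq\xi$, $\xi_2\neq\xi$ (this is the ``necessity of cancellations'' discussed in Section \ref{sect:intro_main} and Remark \ref{rem:cancellation}, where it is also noted that the correction term by itself is not even well defined in $\widehat{H}^{1/2}_r(\T)$ for $1<r<2$, so it cannot be absorbed perturbatively). A proof that estimates $v^2\partial_x\bar v$ as a whole, as you propose, cannot close for any $r<2$. Relatedly, your gauge omits the composition with the translation $\tau^-$ by $2t\dashint_0^{2\pi}|u|^2$; and it is this translation, not the phase factor (which is locally bi-Lipschitz by Lemma \ref{lem:lip}), that causes the failure of uniform continuity (Lemma \ref{lemma:trans}) and becomes harmless on fixed-$L^2$-norm slices.

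The second gap concerns the spaces and the key tool. You work in $X^{1/2,b}_r$ with $\ell^{r'}_\xi L^{r'}_\tau$ integration and a generic $b>\frac{1}{r'}$; the paper explains (``Modification of the norms'', Section \ref{sect:intro_main}) why this cannot reach $r>\frac43$. Since there is no smoothing on the torus, the whole derivative on $\bar v$ must be recovered from the resonance identity \eqref{eq:res_id} alone, which forces the modulation exponents to be exactly $-\frac12$ on the left and $+\frac12$ on the right; running the $[G03]$-type scheme with $L^{r'}_\tau$ then leaves an effective exponent of at most $\frac23$ and the non-optimal restriction $r>\frac32$. Reaching $\frac43$ is exactly what the fourth parameter in Definition \ref{def:x} buys: one takes $\ell^{r'}_\xi L^{2}_\tau$ (i.e.\ $X^{1/2,\pm 1/2}_{r,2}$), intersected with $X^{1/2,0}_{r,\infty}$ for the embedding into $C([-T,T],\widehat{H}^{1/2}_r)$ and the Duhamel estimate. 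Moreover, the periodic $L^4$ Strichartz estimate was the central tool only in the $r=2$ case of \cite{H06}; for $r<2$ the paper replaces it by number-of-divisor estimates, including the refined geometric one of Lemma \ref{lem:divisors}, used throughout the case analysis of Theorem \ref{thm:tri_x1}, and the quintic term requires a trilinear refinement of the $L^6$ Strichartz estimate (Lemma \ref{lem:stref}) combined with complex interpolation (Lemma \ref{lem:interpolation}). Finally, a small but confusing slip: with $q\leq r$ the embedding of Remark \ref{rem:prop_sob} reads $\widehat{H}^{1/2}_r(\T)\subset\widehat{H}^{1/2}_q(\T)$, so the $\widehat{H}^{1/2}_q$-norm is the \emph{weaker} norm; the content of the lifespan statement $T\ls R^{-2q'-}$ is precisely that $T$ depends only on this weaker norm of the data.
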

\begin{remark}\label{rem:main}
  \begin{enumerate}
  \item The uniqueness statement in the theorem above can be
    sharpened, see Remark \ref{rem:sharp_uni}.
  \item Our methods rely on the $L^2$ conservation law, but not on the complete integrability of
    \eqref{eq:dnls}, see \cite{KN78}, and also apply to nonlinearities with (say)
    additional polynomial terms of type $|u|^k u$
  \item Solution always means solution of the corresponding integral
    equation
    \begin{equation*}
      u(t)=e^{it\partial_x^2}u_0+
      i\int_0^t e^{i(t-t')\partial_x^2}\partial_x(|u|^2u)(t')dt'
      \; , \; t \in (-T,T).
    \end{equation*}
  \item In view of the counterexamples in \cite{H06}, Theorem 5.3, and
    \cite{H06diss}, Theorem 3.1.5, which are essentially of the same
    kind as the one already given in \cite{Tak99}, Proposition 3.3, we
    cannot expect any positive result for $s<\frac{1}{2}$.  Observe
    that the examples concerning the periodic case are monochromatic
    waves and so do not distinguish between an $\ell^2_{\xi}$- and an
    $\ell^{r'}_{\xi}$-norm. Concerning the second parameter $r$, we
    must leave open the question, whether or not there is local
    well-posedness for $r\leq \frac{4}{3}$. Nonetheless, we will show
    below that our result is optimal within the framework we use.
  \end{enumerate}
\end{remark}
Before we turn to details, let us point out, that in the periodic case
almost nothing is known about Cauchy problems with data in the
$\widehat{H}^{s}_r(\T)$ spaces. The only result we are aware of is due
to Christ \cite{C05a, C05b}, who considers the following modification
of the cubic nonlinear Schr\"odinger equation on the one-dimensional torus
\begin{equation}\label{eq:nls_christ}
  \tag{NLS*} i\partial_t u + \partial_x^2 u = \left(|u|^2- 2 \dashint_0^{2\pi} |u|^2 dx\right)u,
\end{equation}
with initial condition $u(0)=u_0 \in \widehat{H}^s_r(\T)$.  He shows
that for $s \geq 0$ and $r>1$ the solution map
\begin{equation*}
  S: H^{\sigma}(\T) \longrightarrow C([0,\infty),H^{\sigma}(\T))\cap C^1([0,\infty),H^{\sigma-2}(\T))
\end{equation*}
($\sigma$ sufficiently large) ``extends by continuity to a uniformly
continuous mapping from the ball centered at $0$ of [arbitrary] radius
$R$ in $\widehat{H}^{s}_r(\T)$ to
$C([0,\tau],\widehat{H}^{s}_r(\T))$'', where $\tau$ depends on $R$, see Theorem 1.1
in \cite{C05a}.\footnote{In \cite{C05a} the data spaces are denoted as
$\mathcal{H}^{s,p}(\T)$, which corresponds
to $\widehat{H}^{s}_{p'}(\T)$ in our terms.} This result is shown by a
\emph{new method of solution}, which is developed in \cite{C05a}, a
summary of this method is given in Section 1.5 of that paper. The
positive result in \cite{C05a} is supplemented in \cite{C05b} by a
statement of non-uniqueness: For $2>r>1$ there exists a non-vanishing
weak solution $u \in C([0,1],\widehat{H}^{0}_r(\T))$ of
\eqref{eq:nls_christ} with initial value $u_0\equiv 0$, see Theorem
2.3 in \cite{C05b}.\footnote{A precise definition of a weak solution is given in \cite{C05b}, Section 2.1.}

\medskip

Using the function spaces $X^{s,b}_r$, defined by the norms
\begin{equation*}
  \|u\|_{X^{s,b}_r} =
  \|
  \langle\xi\rangle^s \langle\tau+\xi ^2\rangle^b\widehat{u}
  \|_{\ell^{r'}_{\xi}L^{r'}_{\tau}},
\end{equation*}
where $\frac{1}{r}+\frac{1}{r'}=1$ (cf. \cite{G03}, Section 2), we can
actually show local \emph{well-posedness} of the initial value problem
associated to \eqref{eq:nls_christ} with data $u_0 \in
\widehat{H}^0_r(\T)$, $2>r>1$, thus giving an alternative proof (based
on the contraction mapping principle) of Christ's result from
\cite{C05a}. The argument also provides uniqueness of the solution in
the restriction norm space based on $X^{0,b}_r$, see (2.37) and (2.38) in
\cite{G03}.
It was the starting point for our investigations
concerning the \eqref{eq:dnls} equation and exhibits already some of
the main arguments, so let us sketch this proof:

\medskip

We define the trilinear Operator $C_1$ by its partial Fourier
transform (in the space variable only)
\begin{equation*}
  \widehat{C_1(u_1,u_2,u_3)}(\xi)=(2\pi)^{-1}
  \sum_{\genfrac{}{}{0pt}{}{\xi=\xi_1+\xi_2+\xi_3}{\xi_1 \not= \xi, \xi_2\not=\xi}}
  \widehat{u}_1(\xi_1)\widehat{u}_2(\xi_2)
  \widehat{\overline{u}}_3(\xi_3),
\end{equation*}
so that the (partial) Fourier transform of the nonlinearity in
\eqref{eq:nls_christ} becomes
\begin{equation*}
  \widehat{C_1(u,u,u)}(\xi) - (2\pi)^{-1}\widehat{u}^2(\xi)\widehat{\overline{u}}(-\xi).
\end{equation*}
By Theorem 2.3 from \cite{G03} it is sufficient to estimate the latter
appropriately in $X^{0,b}_r$-norms. Here, the second contribution turns
out to be harmless, cf. the end of the proof of Theorem
\ref{thm:tri_x1} below. So matters essentially reduce to show the
following estimate:

\begin{prop}\label{prop:nls_christ}
  Let $r>1$, $\varepsilon>0$ and $b>\frac{1}{r}$. Then
  \begin{equation*}
    \|C_1(u_1,u_2,u_3)\|_{X^{0,-\varepsilon}_r} \ls \prod_{i=1}^3 \|u_i\|_{X^{0,b}_r}.
  \end{equation*}
\end{prop}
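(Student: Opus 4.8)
The key algebraic observation is the factorisation of the resonance function. Writing $\xi_3=\xi-\xi_1-\xi_2$ and denoting by
$\sigma_0=\tau+\xi^2$, $\sigma_1=\tau_1+\xi_1^2$, $\sigma_2=\tau_2+\xi_2^2$, $\sigma_3=-\tau_3+\xi_3^2$
the modulations of the output and of the three input factors (the last sign reflecting the complex conjugation of $u_3$), a direct computation using $\tau=\tau_1+\tau_2+\tau_3$ gives
\[
  \sigma_0-\sigma_1-\sigma_2+\sigma_3=\xi^2-\xi_1^2-\xi_2^2+\xi_3^2=2(\xi-\xi_1)(\xi-\xi_2).
\]
On the support of $C_1$ we have $\xi_1\neq\xi$ and $\xi_2\neq\xi$, so the integers $\xi-\xi_1$ and $\xi-\xi_2$ are nonzero, and the triangle inequality yields
\[
  \max\left(\lb\sigma_0\rb,\lb\sigma_1\rb,\lb\sigma_2\rb,\lb\sigma_3\rb\right)\gs|\xi-\xi_1|\,|\xi-\xi_2|\ge1.
\]
This is the \emph{only} place where the frequency restriction defining $C_1$ enters, and it is precisely what removes the resonant self-interactions responsible for the failure of a naive estimate.

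First I would dualise. Since the dual exponent of $r'$ is $r$, the dual of $X^{0,-\eps}_r$ is $X^{0,\eps}_{r'}$, so it suffices to bound the quadrilinear form obtained by pairing $C_1(u_1,u_2,u_3)$ against $v$ with $\|v\|_{X^{0,\eps}_{r'}}\le1$. Passing to absolute values and substituting $f_i=\lb\sigma_i\rb^b|\widehat{u_i}|$ (the third reflected) and $g=\lb\sigma_0\rb^{\eps}|\widehat{v}|$, with $\|f_i\|_{\ell^{r'}_\xi L^{r'}_\tau}=\|u_i\|_{X^{0,b}_r}$ and $\|g\|_{\ell^r_\xi L^r_\tau}\le1$, the claim becomes
\[
  \sum_{\xi=\xi_1+\xi_2+\xi_3}\ \int_{\tau=\tau_1+\tau_2+\tau_3}\frac{g\,f_1\,f_2\,f_3}{\lb\sigma_0\rb^{\eps}\lb\sigma_1\rb^{b}\lb\sigma_2\rb^{b}\lb\sigma_3\rb^{b}}\ \ls\ \|g\|_{\ell^r_\xi L^r_\tau}\prod_{i=1}^{3}\|f_i\|_{\ell^{r'}_\xi L^{r'}_\tau},
\]
with the restriction $\xi_1,\xi_2\neq\xi$ understood throughout.

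Next I would split the region of summation according to which of the four modulations is the largest and fix a small $\delta$ with $0<\delta\le\eps$ and $\delta<b-\tfrac1r$. Using the resonance bound, I replace $\lb\sigma_{\max}\rb^{-\delta}$ by the frequency gain $(|\xi-\xi_1|\,|\xi-\xi_2|)^{-\delta}$; in every case this leaves enough modulation weight on the three input factors (at least $b-\delta>\tfrac1r$ on each) to apply the periodic $L^4$-Strichartz estimate in its $X^{0,b}_r$-form, i.e.\ the generalised–restriction–norm analogue of Bourgain's $\|u\|_{L^4_{t,x}}\ls\|u\|_{X^{0,b}_2}$ (cf.\ \cite{G03}, \cite{Bo93a}). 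Grouping the four factors into two pairs and applying the bilinear version of this estimate, the spacetime Fourier analysis produces a divisor-function factor attached to the level set $\{2(\xi-\xi_1)(\xi-\xi_2)=H\}$; by the classical divisor bound this is $\ls|H|^{0+}=(|\xi-\xi_1|\,|\xi-\xi_2|)^{0+}$. The fixed gain $(|\xi-\xi_1|\,|\xi-\xi_2|)^{-\delta}$ beats this loss, and the surviving $\tau$-integrals converge since $b>\tfrac1r$ means $\lb\cdot\rb^{-b}\in L^r_\tau$, while the $\xi$-sums converge because $r>1$.

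The main obstacle is genuinely the regime $r\to1^+$ (equivalently $r'\to\infty$). The frequency restriction is what makes the proposition hold with an \emph{arbitrarily small} loss $\eps$ on the output: without it, three applications of $L^4$ via H\"older give at best $X^{0,-3/8}_2$ at $r=2$, and the dual bound $\|v\|_{L^4_{t,x}}\ls\|v\|_{X^{0,\eps}_{r'}}$ fails for small $\eps$ once $r$ is close to $1$, so the resonance factorisation is indispensable. The delicate point is therefore to verify that the $L^4$-Strichartz estimate survives the passage from $\ell^2_\xi$ to $\ell^{r'}_\xi$ with a loss no worse than $|H|^{0+}$ in the resonance variable, so that the fixed $\delta$-gain absorbs it uniformly for every $r>1$. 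Organising the four modulation cases — in particular those in which an input modulation is dominant, where one must still run the bilinear estimate on the two remaining inputs — is the bulk of the bookkeeping.
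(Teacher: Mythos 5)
Your opening moves are all correct and standard: the resonance identity $\sigma_0-\sigma_1-\sigma_2+\sigma_3=2(\xi-\xi_1)(\xi-\xi_2)$ (with your sign convention for $\sigma_3$), the observation that the restrictions $\xi_1\neq\xi$, $\xi_2\neq\xi$ make both factors nonzero integers, the dualization against $g$ with $\|g\|_{\ell^r_\xi L^r_\tau}\le 1$, and the case distinction according to the maximal modulation with a borrowed exponent $\delta\le\eps$, $\delta<b-\tfrac1r$. The gap is the step that is supposed to close the argument. You invoke ``the periodic $L^4$-Strichartz estimate in its $X^{0,b}_r$-form'' together with a bilinear version of it, and then you yourself defer exactly this point (``the delicate point is therefore to verify that the $L^4$-Strichartz estimate survives the passage from $\ell^2_\xi$ to $\ell^{r'}_\xi$ with a loss no worse than $|H|^{0+}$''). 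That verification is never supplied, and in the form stated the tool does not exist: for $r<2$ one has $r'>2$, so $\|\widehat u\|_{\ell^{r'}_\xi}\le\|\widehat u\|_{\ell^{2}_\xi}$ and an estimate $\|u\|_{L^4_{t,x}}\ls\|u\|_{X^{0,b}_{r}}$ would be strictly stronger than Bourgain's. Testing on a time-localized free solution with $\widehat{u_0}=\mathbf 1_{\{1,\dots,N\}}$ gives $\|u\|_{X^{0,b}_r}\sim N^{1/r'}$ while $\|u\|_{L^4_{t,x}}\gs N^{1/2}$ (by Cauchy--Schwarz, since $\int|u(t,x)|^2dx\sim N$ for each $t$), i.e.\ a \emph{polynomial} loss $N^{\frac12-\frac1{r'}}$ rather than a divisor-type $0+$ loss. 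A gain $(|\xi-\xi_1||\xi-\xi_2|)^{-\delta}$ with $\delta\le\eps$, $\delta<b-\tfrac1r$ cannot absorb such a loss, because the proposition must hold for every $\eps>0$ and every $b>\tfrac1r$, so $\delta$ is forced to be arbitrarily small while the loss is a fixed positive power. The central analytic step of your proof is therefore missing, and its most natural reading is false.

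What replaces it needs no Strichartz input at all, and this is how the paper argues. Writing the claim as the weighted convolution estimate \eqref{eq:nls_tri} for $f_i=\lb\sigma_i\rb^b|\mathcal F u_i|$, one applies H\"older's inequality with exponents $(r,r')$, for each fixed $(\xi,\tau)$, to the sum over $(\xi_1,\xi_2)$ and the integral over $(\tau_1,\tau_2)$, and then Fubini's theorem to recombine the $f_i^{r'}$ factors; this reduces the whole proposition to the single kernel bound
\begin{equation*}
\sup_{\xi,\tau}\;\lb\sigma_0\rb^{-r\eps}
\sum_{\genfrac{}{}{0pt}{}{\xi=\xi_1+\xi_2+\xi_3}{\xi_1\neq\xi,\;\xi_2\neq\xi}}
\int\limits_{\tau=\tau_1+\tau_2+\tau_3}\prod_{i=1}^3\lb\sigma_i\rb^{-rb}\,d\tau_1\,d\tau_2\;<\;\infty ,
\end{equation*}
which is \eqref{eq:sup_nls_christ}; note that this is precisely where the hypothesis $b>\tfrac1r$ (i.e.\ $rb>1$) enters. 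From there the endgame is the one you anticipated: the resonance relation distributes a factor $\lb\xi-\xi_1\rb^{0-}\lb\xi-\xi_2\rb^{0-}$, Lemma \ref{lem:fam2.12} performs the two $\tau$-integrations leaving $\lb\tau+\xi^2-2(\xi-\xi_1)(\xi-\xi_2)\rb^{-1-}$, and the divisor bound $d(n)\ls_\eps n^\eps$ makes the remaining sum uniformly bounded. So your intuition about where the frequency restrictions and the divisor count enter is right, but the bridge from the multilinear form to that counting argument must be the elementary H\"older--Fubini reduction (valid for every fixed $r>1$, with constants depending on $r,b,\eps$), not an $\ell^{r'}$-based $L^4$ estimate.
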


\begin{proof}
  Choosing $f_i \in \ell ^{r'}_{\xi}L^{r'}_{\tau}$ such that
  $\|f_i\|_{\ell ^{r'}_{\xi}L^{r'}_{\tau}}=\|u_i\|_{X^{0,b}_r}$ the
  above estimate can be rewritten as
  \begin{equation}\label{eq:nls_tri}
    \left
      \|\lb\sigma_0\rb^{-\varepsilon}
      \sum_{\genfrac{}{}{0pt}{}{\xi=\xi_1+\xi_2+\xi_3}{\xi_1 \not= \xi, \xi_2\not=\xi}}
      \int\limits_{\tau=\tau_1+\tau_2+\tau_3} \prod_{i=1}^3 \frac{f_i(\xi_i,\tau_i)}{\lb\sigma_i\rb^b}
      \, d\tau_1d\tau_2
    \right\|_{\ell ^{r'}_{\xi}L^{r'}_{\tau}}
    \ls \prod_{i=1}^3\|f_i\|_{\ell ^{r'}_{\xi}L^{r'}_{\tau}},
  \end{equation}
  where $\sigma_0=\tau+\xi^2$, $\sigma_{i}=\tau_{i}+\xi_{i}^2$
  ($i=1,2$) and $\sigma_3=\tau_3-\xi_3^2$.  By H\"older's inequality
  and Fubini's theorem \eqref{eq:nls_tri} can be deduced from
  \begin{equation}\label{eq:sup_nls_christ}
    \sup_{\xi,\tau}\, \lb\sigma_0\rb^{-r\varepsilon}
    \sum_{\genfrac{}{}{0pt}{}{\xi=\xi_1+\xi_2+\xi_3}{\xi_1 \not= \xi, \xi_2\not=\xi}}
    \int\limits_{\tau=\tau_1+\tau_2+\tau_3} \prod_{i=1}^3\lb\sigma_i\rb^{-rb} \, d\tau_1d\tau_2 < \infty.
  \end{equation}
  Using the resonance relation
  \begin{equation}\label{eq:rr}
    2|\xi_1\xi_2+\xi\xi_3|=2|\xi-\xi_1||\xi-\xi_2|\le \sum_{i=0}^3\lb\sigma_i\rb\le \prod_{i=0}^3\lb\sigma_i\rb
  \end{equation}
  the left hand side of \eqref{eq:sup_nls_christ} is bounded by
  \begin{align*}
    & \sum_{\genfrac{}{}{0pt}{}{\xi=\xi_1+\xi_2+\xi_3}{\xi_1 \not=
        \xi, \xi_2\not=\xi}}
    \lb\xi-\xi_1\rb^{0-}\lb\xi-\xi_2\rb^{0-}\int\limits_{\tau=\tau_1+\tau_2+\tau_3}
    d\tau_1d\tau_2\prod_{i=1}^3\lb\sigma_i\rb^{-1-}\\
    \ls & \sum_{\genfrac{}{}{0pt}{}{\xi=\xi_1+\xi_2+\xi_3}{\xi_1 \not=
        \xi, \xi_2\not=\xi}} \lb\xi-\xi_1\rb^{0-}\lb\xi-\xi_2\rb^{0-}
    \lb \tau + \xi^2 -2(\xi-\xi_1)(\xi-\xi_2)\rb^{-1-},
  \end{align*}
  where in the last step we have used Lemma \ref{lem:fam2.12} twice.
  Setting $\Z^\ast=\Z\setminus\{0\}$, $n_{i}=\xi-\xi_{i}$ for $i=1,2$ and $r=n_1 n_2$ the last sum
  can be rewritten as
  \begin{equation*}
    \sum_{r\in \Z^{\ast}}\lb \tau + \xi^2 -2r\rb^{-1-}\lb r \rb^{0-}
    \sum_{\genfrac{}{}{0pt}{}{n_1,n_2 \in \Z^{\ast} }{r=n_1 n_2}} 1
  \end{equation*}
  which is bounded by a constant independent of $\xi$ and $\tau$,
  since the number of divisors of $r\in \N$ can be estimated by
  $c_{\varepsilon}r^{\varepsilon}$ for any positive ${\varepsilon}$.
\end{proof}

Three aspects of the preceding are worth to be emphasized in view of
our investigations
here.
\subsection*{Necessity of cancellations and correction terms:}
For $r<2$ the above argument breaks down completely without the
restrictions $\xi \neq \xi_1$ and $\xi \neq \xi_2$ in the sum over the
Fourier coefficients. As was pointed out already by Christ, this
cancellation comes from the correction term
$2\dashint_0^{2\pi}|u|^2dx u$ subtracted in the nonlinearity, and for any other
coefficient in front of this term one cannot obtain continuous
dependence (see \cite{C05a}, last sentence of Section 1.3 and the
remark before (2.6)).  A very similar cancellation turns out to be
fundamental in our analysis of the \eqref{eq:dnls} equation, but here
\emph{the corresponding correction term comes from the gauge
transform} in its periodic variant, which is discussed in Section
\ref{sect:gauge}, Remark \ref{rem:cancellation} below. In fact, the main contribution to the cubic
part of the transformed equation is given by $T^*(u,u,u)$, where
\begin{equation*}
  \widehat{T^*(u_1,u_2,u_3)}(\xi)=(2\pi)^{-1}
  \sum_{\genfrac{}{}{0pt}{}{\xi=\xi_1+\xi_2+\xi_3}{\xi_1 \not= \xi, \xi_2\not=\xi}}
  \widehat{u}_1(\xi_1)\widehat{u}_2(\xi_2)
  i \xi_3 \widehat{\overline{u}}_3(\xi_3).
\end{equation*}
Again, our argument would not work without the
restrictions $\xi\neq \xi_1$, $\xi\neq \xi_2$.

\subsection*{Modification of the norms:}
If we try to estimate the term $T^*(u_1,u_2,u_3)$ in an
$X^{s,b}_r$-norm in a similar manner as in the proof of Proposition
\ref{prop:nls_christ}, we have to get control over a whole derivative,
that is, on Fourier side, over the factor $\xi_3$. The complete
absence of smoothing effects (gaining derivatives) in the periodic
case forces us to get this control from the resonance relation
\eqref{eq:rr} only, which is the same for \eqref{eq:dnls} as
for \eqref{eq:nls_christ}. This means, that we have to choose the
$b$-parameters equal to $-\frac{1}{2}$ on the left and to
$+\frac{1}{2}$ on the right hand side of the estimate. Now, the
necessity to cancel the $\xi_3$-factor and the resonance relation lead
to the consideration of eight cases -- some of them being symmetric --
depending on which of the $\sigma$'s is maximal and on whether or not
$|\xi \xi_3| \ls |\xi_1 \xi_2|$, see the table in the proof of Theorem
\ref{thm:tri_x1} below. Picking out the (relatively harmless) subcase,
where $|\xi \xi_3| \ls |\xi_1 \xi_2|$ and $\sigma_0$ is maximal, so
that $\prod_{i=1}^3\lb\sigma_i\rb^{\frac{1}{6}} \le
\lb\sigma_0\rb^{\frac{1}{2}}$, we are in the situation of the above
proof, with a half derivative on each factor (as desired) but with a
$b$-parameter on the right of at most $\frac{2}{3}$, which means that
we end up with the non-optimal restriction $r>\frac{3}{2}$. This leads
us to \emph{introduce a fourth parameter} $p$ in the
$X^{s,b}_r$-norms, which is the H\"older exponent concerning the
$\tau$-integration and may differ from $r$, see Definition \ref{def:x}
below. In our application here we choose $p=2$, thus going back to
some extent to the meanwhile classical $X^{s,b}$-spaces.

\subsection*{Number of divisor estimate:}
The number of divisor argument at the end of the proof of Proposition
\ref{prop:nls_christ} has been used already in Christ's work and can
be seen as a substitute for Bourgain's $L^6$ Strichartz estimate for
the periodic case, which itself was shown by the aid of this argument,
see  Proposition 2.36 in \cite{Bo93a}. We will need a refined version
thereof, which is shown by elementary geometric considerations in
Section \ref{sect:nodc}. Here, we use arguments similar to those
of De Silva, Pavlovic, Staffilani, and Tzirakis \cite{DPST06},
Section 4.

\medskip

Concerning the organization of the paper the following should be
added: In Section \ref{sect:spaces} we introduce the relevant function
spaces and state all the nonlinear estimates needed as well as a
sharpness result. The crucial trilinear estimates and a counterexample
are derived in Section \ref{sect:tri}, which is very much in the spirit
of \cite{KPV96}. Section \ref{sect:quinti} deals with the quintilinear
estimate.  In both cases we have made some effort to extract the correct
lifespan from the nonlinear estimates and to obtain persistence of
higher regularity. By this we mean that the lifespan of a solution
with $\widehat{H}^{\frac{1}{2}}_r(\T)$-data only depends on the
smaller $\widehat{H}^{\frac{1}{2}}_q(\T)$-norm of the initial value,
where $2 \ge r > q > \frac{4}{3}.$\footnote{We refer to \cite{Se01},
  Theorem 2, part V, for the corresponding notion, if data in the
  $H^s$-scale are considered. In the $H^s$-case this property usually
  is a simple consequence of the convolution constraint - see again
  \cite{Se01}, Remark 2 below Theorem 3. We cannot see that a similar
  argument should work in our setting.} Finally, in Section
\ref{sect:proof_wp}, the contraction mapping principle is invoked to
prove local well-posedness for the transformed equation
\eqref{eq:gauge_dnls}, see Theorem \ref{thm:wp_gauge_dnls}. Our main
result, Theorem \ref{thm:main}, is then a consequence of Lemma
\ref{lemma:gauge_prop} on the gauge transform.

We close this section by fixing some notational conventions.
\begin{itemize}
\item The Fourier transform with respect to the space variable
  (periodic)
  \begin{align*}
    \mathcal{F}_x f(\xi) =\widehat{f}(\xi)
    =\frac{1}{\sqrt{2\pi}}\int_0^{2\pi} f(x) e^{-ix\xi} dx \quad (f
    \in L^1(\T))
  \end{align*}
\item The Fourier transform with respect to the time variable
  (nonperiodic)
  \begin{equation*}
    \mathcal{F}_t f(\tau)
    =\frac{1}{\sqrt{2\pi}}\int_\R f(t) e^{-it\tau} dt\quad (f \in L^1(\R))
  \end{equation*}
\item The Fourier transform with respect to time and space variables
  $\mathcal{F}=\mathcal{F}_t\mathcal{F}_x$
\item For the mean value integral we write
  $$\dashint_0^{2\pi} f(x)  dx=\frac{1}{2\pi} \int_0^{2\pi} f(x) dx
  \quad (f \in L^1(\T))$$
\item Let $a\in \R$. The expressions $a\pm$ denote numbers $a\pm\eps$
  for an arbitrarily small $\eps>0$
\item For a given set of parameters (typically a subset of
  $\eps,\delta,\nu,p,q,r,s$) the statement $A \ls B$ means that there
  exists a constant $C>0$ which depends only on these parameters such
  that $A\leq C B$. This is equivalent to $B\gs A$.  We may write
  $A\ll B$ if it is possible to choose $0<C<\frac{1}{4}$.
\item For all parameters $1\leq p\leq \infty$ the number $1\leq p'\leq
  \infty$ is defined to be the dual parameter satisfying
  $\frac{1}{p}+\frac{1}{p'}=1$.
\end{itemize}

\subsection*{Acknowledgement}
The second author gratefully acknowledges partial support from the
German Research Foundation (DFG), grant KO 1307/5-3.

\section{Function spaces and main estimates}\label{sect:spaces}
\noindent
Let $\test$ be the linear space of all $C^\infty$-functions $f:\R^2
\to \C$ such that
\begin{equation*}
  f(t,x)=f(t,x+2\pi)\; , \,
  \sup_{(t,x) \in \R^2} |t^\alpha \partial^\beta_t\partial^\gamma_x
  f(t,x)|<\infty \;, \alpha,\beta,\gamma \in \N_0
\end{equation*}
\begin{definition}\label{def:x} Let $s,b \in \R$, $1 \leq r,p
  \leq \infty$ and
  $\frac{1}{r}+\frac{1}{r'}=1=\frac{1}{p}+\frac{1}{p'}$.  Define the
  space $X^{s,b}_{r,p}$ as the completion of $\test$ with respect to
  the norm
  \begin{equation}\label{eq:x_norm}
    \|u\|_{X^{s,b}_{r,p}}=\|\lb\tau+\xi^2\rb^{b}\lb \xi \rb^{s}
    \mathcal{F}u\|_{\ell^{r'}_{\xi}L^{p'}_{\tau}}
  \end{equation}
\end{definition}
In the case where $r=p=2$ we write $X^{s,b}_{r,p}=X^{s,b}$ as usual.
\begin{lemma}\label{lem:cont_emb} Let $s,b_1,b_2\in \R$, $1\leq r\leq \infty$ and $b_1>b_2+\frac{1}{2}$. 
  The following embeddings are continuous:
  \begin{align}
    \label{eq:x_emb} X^{s,b_1}_{r,2} &\subset X^{s,b_2}_{r,\infty}\\
    \label{eq:pers} X^{s,0}_{r,\infty} &\subset
    C(\R,\widehat{H}^{s}_{r}(\T))
  \end{align}
\end{lemma}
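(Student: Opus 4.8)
The plan is to treat the two embeddings separately, each reducing to an elementary one-variable estimate at fixed frequency $\xi$ followed by the $\ell^{r'}_\xi$-summation. For \eqref{eq:x_emb}, observe that the source space $X^{s,b_1}_{r,2}$ carries $L^{p'}_\tau=L^2_\tau$ (since $p=2$), while the target $X^{s,b_2}_{r,\infty}$ carries $L^{p'}_\tau=L^1_\tau$ (since $p=\infty$); hence it suffices to prove, for each fixed $\xi$, the pointwise-in-$\xi$ bound
\[
\|\lb\tau+\xi^2\rb^{b_2}\lb\xi\rb^s\mathcal{F}u(\xi,\cdot)\|_{L^1_\tau}
\ls \|\lb\tau+\xi^2\rb^{b_1}\lb\xi\rb^s\mathcal{F}u(\xi,\cdot)\|_{L^2_\tau}
\]
with constant independent of $\xi$, and then apply the $\ell^{r'}_\xi$-norm to both sides. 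First I would factor out the weight $\lb\tau+\xi^2\rb^{b_2-b_1}$ and apply the Cauchy--Schwarz inequality in $\tau$; the factor $\|\lb\tau+\xi^2\rb^{b_2-b_1}\|_{L^2_\tau}$ is finite precisely because $2(b_1-b_2)>1$, i.e. $b_1>b_2+\tfrac12$, and by translation invariance of $d\tau$ it equals $\|\lb\cdot\rb^{b_2-b_1}\|_{L^2}$, independent of $\xi$. This part is entirely routine.

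For \eqref{eq:pers} I would first establish the uniform bound $\sup_{t}\|u(t)\|_{\widehat{H}^s_r}\ls\|u\|_{X^{s,0}_{r,\infty}}$. Recovering $\widehat{u(t)}$ by Fourier inversion in the time variable gives $\lb\xi\rb^s\widehat{u(t)}(\xi)=(2\pi)^{-\frac12}\int_\R\lb\xi\rb^s\mathcal{F}u(\xi,\tau)e^{it\tau}\,d\tau$, so taking absolute values and integrating yields $|\lb\xi\rb^s\widehat{u(t)}(\xi)|\ls\|\lb\xi\rb^s\mathcal{F}u(\xi,\cdot)\|_{L^1_\tau}$ uniformly in $t$; taking the $\ell^{r'}_\xi$-norm and noting that for $b_2=0$, $p=\infty$ the right-hand side is exactly $\|u\|_{X^{s,0}_{r,\infty}}$ gives the claim. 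This already shows $X^{s,0}_{r,\infty}\subset L^\infty(\R,\widehat{H}^s_r(\T))$.

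The only genuine point is upgrading this $L^\infty_t$ bound to continuity in $t$, and I would handle it by density. For $u\in\test$ the map $t\mapsto u(t)$ is smooth, hence continuous, as a $\widehat{H}^s_r(\T)$-valued function, so the evaluation map $u\mapsto(t\mapsto u(t))$ sends $\test$ into $C_b(\R,\widehat{H}^s_r(\T))$ and, by the bound just proved, is bounded from the $X^{s,0}_{r,\infty}$-norm to the sup-norm. Since $\test$ is dense in $X^{s,0}_{r,\infty}$ by definition and $C_b(\R,\widehat{H}^s_r(\T))$ is complete, the map extends continuously to all of $X^{s,0}_{r,\infty}$ with image in $C_b(\R,\widehat{H}^s_r(\T))\subset C(\R,\widehat{H}^s_r(\T))$: approximating $u$ by $u_n\in\test$, the bound shows $u_n(\cdot)\to u(\cdot)$ uniformly in $t$, and a uniform limit of continuous $\widehat{H}^s_r(\T)$-valued maps is continuous. (Alternatively one can argue continuity directly, fixing $\xi$ and using dominated convergence in $\tau$ as $t'\to t$, then passing the limit through the $\ell^{r'}_\xi$-norm; the density argument is cleaner.) The main obstacle is exactly this passage from boundedness to genuine continuity, which the density of $\test$ resolves.
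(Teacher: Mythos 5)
Your proof is correct and follows essentially the same route as the paper's: Cauchy--Schwarz in $\tau$ at fixed $\xi$ (using $2(b_1-b_2)>1$) for \eqref{eq:x_emb}, and the bound $\|f\|_{L^\infty_t}\ls\|\mathcal{F}_t f\|_{L^1_\tau}$ for \eqref{eq:pers}, which is exactly the paper's appeal to $L^\infty(\R)\subset\mathcal{F}^{-1}_t L^1(\R)$. Your density argument upgrading the $L^\infty_t$ bound to genuine continuity simply makes explicit what the paper's one-line proof leaves implicit, since $X^{s,0}_{r,\infty}$ is by definition the completion of $\test$.
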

\begin{proof}
  The first embedding is proved by the Cauchy-Schwarz
  inequality with respect to the $L^1_\tau$ norm. The second embedding follows from
  $L^\infty(\R) \subset \mathcal{F}^{-1}_t L^1(\R)$.
\end{proof}
\begin{definition}\label{def:res_space}
  Let $s\in \R$ and $1\leq r \leq \infty$. We define
  \begin{equation*}
    Z^s_r:=X^{s,\frac{1}{2}}_{r,2} \cap X^{s,0}_{r,\infty}
  \end{equation*}
  and for $0<T\leq 1$ the restriction space $Z^{s}_{r}(T)$ of all
  $v=w\mid_{[-T,T]}$ for some $w\in Z^s_r$ with norm
  \begin{equation*}
    \|v\|_{Z^{\frac{1}{2}}_{r}(T)}
    :=\inf\{\|w\|_{Z^{\frac{1}{2}}_r}|w \in Z^{\frac{1}{2}}_r: w\mid_{[-T,T]}=v\}
  \end{equation*}
\end{definition}
A main ingredient for the proof of Theorem \ref{thm:main} is an
estimate on the trilinear operator (suppressing the $t$ dependence)
\begin{equation}\label{eq:mod_tri}
  \begin{split}
    \widehat{T(u_1,u_2,u_3)}(\xi)=&(2\pi)^{-1}
    \sum_{\genfrac{}{}{0pt}{}{\xi=\xi_1+\xi_2+\xi_3} {\xi_1 \not= \xi,
        \xi_2\not=\xi}} \widehat{u}_1(\xi_1)\widehat{u}_2(\xi_2)i
    \xi_3
    \widehat{\overline{u}}_3(\xi_3)\\
    &+(2\pi)^{-1} \widehat{u}_1(\xi)\widehat{u}_2(\xi)i
    \xi\widehat{\overline{u}}_3(-\xi)
  \end{split}
\end{equation}
\begin{theorem}\label{thm:tri_x1}
  Let $\frac{4}{3}<q\leq r \leq 2$ and $0 \leq \delta<\frac{1}{q'}$.
  Then,
  \begin{equation}\label{eq:tri_x1}
    \|T(u_1,u_2,u_3)\|_{X^{\frac{1}{2},-\frac{1}{2}}_{r,2}} \ls T^\delta
    \|u_1\|_{X^{\frac{1}{2},\frac{1}{2}}_{q,2}}\|u_2\|_{X^{\frac{1}{2},\frac{1}{2}}_{q,2}}
    \|u_3\|_{X^{\frac{1}{2},\frac{1}{2}}_{r,2}}
  \end{equation}
  if $\supp(u_i) \subset \{(t,x) \mid |t| \leq T\}$, $0<T \leq 1$.
\end{theorem}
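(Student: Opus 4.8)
The plan is to transfer \eqref{eq:tri_x1} to the Fourier side and to pay for the lost derivative $\xi_3$ by means of the resonance relation \eqref{eq:rr}. First I would normalise the inputs: set $f_i=\lb\sigma_i\rb^{\frac12}\lb\xi_i\rb^{\frac12}|\mathcal{F}u_i|$, so that $\|f_1\|_{\ell^{q'}_\xi L^2_\tau}=\|u_1\|_{X^{1/2,1/2}_{q,2}}$ and likewise for $f_2$ (with exponent $q$) and $f_3$ (with exponent $r$), using $\widehat{\overline{u}}_3(\xi_3)=\overline{\widehat{u}_3(-\xi_3)}$ together with $\lb-\sigma_3\rb=\lb\sigma_3\rb$ to absorb the conjugation after the change of variables $(\xi_3,\tau_3)\mapsto(-\xi_3,-\tau_3)$. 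Dominating the correction term by its strictly diagonal analogue (handled separately at the very end, exactly as the second, harmless contribution in Proposition \ref{prop:nls_christ}), the main part of $\|T(u_1,u_2,u_3)\|_{X^{1/2,-1/2}_{r,2}}$ becomes the $\ell^{r'}_\xi L^2_\tau$-norm of
\begin{equation*}
  \lb\sigma_0\rb^{-\frac12}\lb\xi\rb^{\frac12}
  \sum_{\substack{\xi=\xi_1+\xi_2+\xi_3\\\xi_1\neq\xi,\ \xi_2\neq\xi}}
  |\xi_3|\int\limits_{\tau=\tau_1+\tau_2+\tau_3}
  \prod_{i=1}^3\frac{f_i(\xi_i,\tau_i)}{\lb\sigma_i\rb^{\frac12}\lb\xi_i\rb^{\frac12}}
  \,d\tau_1\,d\tau_2 .
\end{equation*}
Since each $u_i$, and hence $T(u_1,u_2,u_3)$, is supported in $\{|t|\le T\}$, the factor $T^\delta$ for $0\le\delta<\frac1{q'}$ is produced at the end by a time-localisation argument exploiting this support; this is precisely what cures the borderline (logarithmically divergent) behaviour at the endpoints $b=\pm\frac12$ with $p=2$, and it allows me to relax the output weight to $\lb\sigma_0\rb^{-\frac12+\delta}$, so that it suffices to prove the displayed multiplier bound with this small amount of room.

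The core is the purely spatial multiplier $m=\lb\xi\rb^{\frac12}|\xi_3|\prod_{i=1}^3\lb\xi_i\rb^{-\frac12}$, which carries one full derivative too many. I would control it through \eqref{eq:rr}, which gives $|\xi-\xi_1||\xi-\xi_2|\ls\max_i\lb\sigma_i\rb$, so that a fractional power of the largest modulation can be traded against the derivative. This forces the case analysis announced in the introduction: according to which of $\sigma_0,\dots,\sigma_3$ is maximal, and according to whether $|\xi\xi_3|\ls|\xi_1\xi_2|$ or not, one is led to eight (partly symmetric) cases. In each of them one checks, using $\xi-\xi_1=\xi_2+\xi_3$, $\xi-\xi_2=\xi_1+\xi_3$ and the size relations of the case, that
\begin{equation*}
  m\ls\Big(\max_i\lb\sigma_i\rb\Big)^{\frac12}\lb\xi-\xi_1\rb^{0-}\lb\xi-\xi_2\rb^{0-},
\end{equation*}
the restrictions $\xi_1\neq\xi$, $\xi_2\neq\xi$ being exactly what keeps the right-hand side $\gs 1$ in the resonant configurations. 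Absorbing this half power of the maximal modulation leaves $\prod_{i=0}^3\lb\sigma_i\rb^{-\frac12}$ with the maximal factor raised back to exponent $0$, i.e. three surviving factors $\lb\sigma_i\rb^{-\frac12}$, together with the small spatial gain $\lb\xi-\xi_1\rb^{0-}\lb\xi-\xi_2\rb^{0-}$.

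With $m$ disposed of, what remains is a trilinear sum–integral of exactly the shape treated in Proposition \ref{prop:nls_christ}. The $\tau$-integration is carried out by the convolution estimate of Lemma \ref{lem:fam2.12}; here the choice $p=2$ is essential, since it is what makes the $L^2_\tau$ bound close at the $b=\pm\frac12$ endpoints once the $\delta$-room has been inserted. The $\xi$-summation is then reduced, via $n_1=\xi-\xi_1$, $n_2=\xi-\xi_2$ and $\rho=n_1n_2=\xi_1\xi_2+\xi\xi_3$, to counting representations $\rho=n_1n_2$ with prescribed dyadic sizes of $n_1,n_2$; this is where the crude divisor bound $\rho^{0+}$ of Proposition \ref{prop:nls_christ} is too lossy and must be replaced by the refined, dyadically localised lattice-point count of Section \ref{sect:nodc}. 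Finally the three $\ell$-summations are closed by H\"older, where $q\le r$ gives $\ell^{r'}\subset\ell^{q'}$, so that $u_1$ and $u_2$ may legitimately be measured in the weaker $\ell^{q'}$-norm; this yields the persistence-of-regularity form of the estimate and, together with the time support, the admissible range $\delta<\frac1{q'}$.

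The main obstacle is precisely this last counting step in the binding case, where the extra derivative sits on a high frequency $\xi_3$ and the resonance $|\xi\xi_3+\xi_1\xi_2|$ is close to its smallest admissible size: there the crude divisor estimate only delivers $r>\frac32$, and recovering the sharp range $r>\frac43$ hinges entirely on the sharper count of Section \ref{sect:nodc}, in combination with the gain afforded by $p=2$ and the $T^\delta$ time-localisation. I also expect the bookkeeping of the diagonal correction term and the verification that the time support really produces the stated gain $\delta<\frac1{q'}$ to require some care, but these are secondary to the counting estimate.
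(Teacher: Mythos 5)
Your overall architecture --- reduction to a multiplier estimate, trading the derivative against the maximal modulation via the resonance relation \eqref{eq:rr}, the case analysis in the maximal $\sigma_i$ and in $|\xi\xi_3|$ versus $|\xi_1\xi_2|$, Lemma \ref{lem:fam2.12} for the $\tau$-integrations, the counting results of Section \ref{sect:nodc} for the $\xi$-sums, and a separate direct estimate for the diagonal term --- is indeed the paper's. But two of your quantitative mechanisms fail. First, the pointwise bound $m \ls (\max_i\lb\sigma_i\rb)^{\frac12}\lb\xi-\xi_1\rb^{0-}\lb\xi-\xi_2\rb^{0-}$ is false: take $\xi_1=\xi_2=0$, $\xi_3=\xi$, and modulations with $\lb\sigma_0\rb\approx\xi^2$ and $\sigma_1,\sigma_2,\sigma_3=O(1)$; then $m=|\xi|$ while your right-hand side is $\approx|\xi|\lb\xi\rb^{0-}$. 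More importantly, the correct case analysis yields --- and the paper keeps --- gains of \emph{half} powers, namely $\lb\xi_1\rb^{-\frac12}\lb\xi_2\rb^{-\frac12}$ when $\lb\xi_1\rb\lb\xi_2\rb\ll\lb\xi\rb\lb\xi_3\rb$ and $\lb\xi-\xi_1\rb^{-\frac12}\lb\xi-\xi_2\rb^{-\frac12}$ in the complementary region (the multipliers $m_{k,1}$, $m_{k,2}$). These half powers are not decoration: they are the only resource for closing the $\xi_1,\xi_2$-summation against the $\ell^{q'}$-norms of $f_1,f_2$ (H\"older with $\varrho=\frac{2q'}{q'+2}+$, Minkowski, Young, and in the cases where some $\sigma_j$, $j\geq 1$, is maximal, a duality argument you never mention), and their exhaustion is what produces the threshold $q>\frac43$. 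Once you discard them, the remaining object is \emph{not} of the shape of Proposition \ref{prop:nls_christ}: there the modulation weights carry exponents $b>\frac1r$ and all norms are $\ell^{r'}_\xi L^{r'}_\tau$, whereas here the exponents are $\frac12$ and the norms are mixed $\ell^{q'}_\xi L^2_\tau$, $\ell^{r'}_\xi L^2_\tau$; a sup-kernel/Cauchy--Schwarz argument can at best reach $q=r=2$, and no refinement of the divisor count supplies the missing frequency weights. Your final appeal to ``$q\le r$ gives $\ell^{r'}\subset\ell^{q'}$'' is also backwards: putting the weaker $\ell^{q'}$-norms on the right-hand side makes the estimate \emph{stronger}, so it cannot be deduced from an $\ell^{r'}$-based bound by an embedding; the $q$-dependence has to be built into the H\"older/Young steps, as in the paper.

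Second, your $T^\delta$ mechanism puts the room in the wrong place. You relax only the output weight, $\lb\sigma_0\rb^{-\frac12}\to\lb\sigma_0\rb^{-\frac12+\delta}$. But in the cases where $\sigma_0$ is the maximal modulation --- which are among the main cases --- that weight is exactly what is consumed in the trade against $\lb\xi\rb^{\frac12}\lb\xi_3\rb^{\frac12}$, so your relaxation leaves an unbounded factor $\lb\sigma_0\rb^{+\delta}$ in the numerator, which then has to be distributed onto $\sigma_1,\sigma_2,\sigma_3$ and the resonance factor; the resulting sums are no longer uniformly bounded, so the multiplier estimate you propose to prove (with full input weights $\lb\sigma_j\rb^{-\frac12}$ and no time localization) is not available. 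The paper's device is different: it proves the family \eqref{eq:tri_mult_x_nu}, in which \emph{all three non-maximal} modulation weights are relaxed to $\lb\sigma_j\rb^{-(\frac12-\nu)}$ with $\nu<\frac1{3q'}$, and converts each factor $\lb\sigma_j\rb^{-\nu}$ into $T^{\nu}$ via \eqref{eq:gain_T}, using the time support of each factor (including, for the weight on $\sigma_0$ when $k\neq0$, that of $T^\ast(u_1,u_2,u_3)$ itself); this is how one reaches $\delta=3\nu$ up to $\frac1{q'}$. Note also that your premise is off: the endpoint estimates at $b=\pm\frac12$, $p=2$ close \emph{without} any time localization --- the convergence room comes from $q>\frac43$ (through $\varrho'>2$ and the estimates of Corollary \ref{cor:sums}), not from $T$ --- so $T^\delta$ is not there to ``cure'' a logarithmic divergence; it is needed for the contraction and the lifespan bound, and it must be harvested from modulation weights that actually survive in each case.
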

Additionally, we will need the following estimate on $T(u_1,u_2,u_3)$.
\begin{theorem}\label{thm:tri_x2}
  Let $\frac{4}{3}<q\leq r \leq 2$ and $0 \leq \delta<\frac{1}{q'}$.
  Then,
  \begin{equation}\label{eq:tri_x2}
    \|T(u_1,u_2,u_3)\|_{X^{\frac{1}{2},-1}_{r,\infty}} \ls T^\delta
    \|u_1\|_{X^{\frac{1}{2},\frac{1}{2}}_{q,2}}\|u_2\|_{X^{\frac{1}{2},\frac{1}{2}}_{q,2}}
    \|u_3\|_{X^{\frac{1}{2},\frac{1}{2}}_{r,2}}
  \end{equation}
  if $\supp(u_i) \subset \{(t,x) \mid |t| \leq T\}$, $0<T \leq 1$.
\end{theorem}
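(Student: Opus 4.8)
The plan is to establish \eqref{eq:tri_x2} by the same dualization and frequency--modulation case analysis that underlies Proposition \ref{prop:nls_christ} and Theorem \ref{thm:tri_x1}. Indeed \eqref{eq:tri_x2} is the companion of \eqref{eq:tri_x1} tailored to the second component $X^{\frac12,0}_{r,\infty}$ of the resolution space $Z^{\frac12}_r$: its target carries the stronger modulation weight $\lb\sigma_0\rb^{-1}$ but the weaker time integrability $L^1_\tau$ (here $p=\infty$, so $p'=1$). One cannot simply invoke the embedding \eqref{eq:x_emb} to deduce \eqref{eq:tri_x2} from \eqref{eq:tri_x1}, since that would require $-\frac12>-1+\frac12$ with \emph{strict} inequality; the estimate sits exactly at the endpoint of \eqref{eq:x_emb}, so I would re-run the argument.

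I would begin with two reductions. First, since $\supp(u_i)\subset\{|t|\le T\}$ and the operator $T$ is local in $t$, the output $T(u_1,u_2,u_3)$ is supported in $\{|t|\le T\}$ as well; a standard time-localization estimate then produces the factor $T^\delta$ for any $\delta<\frac1{q'}$, and it suffices to prove the $T$-free estimate with the output modulation index raised from $-1$ to $-1+\delta$. Second, I split $T$ into its main term and the diagonal correction term of \eqref{eq:mod_tri}; the latter has $\xi_1=\xi_2=\xi$, $\xi_3=-\xi$, collapses the frequency sum, and is harmless (as at the end of the proof of Theorem \ref{thm:tri_x1}). For the main term I normalize $f_i=\lb\sigma_i\rb^{\frac12}\lb\xi_i\rb^{\frac12}\mathcal{F}u_i$, so that $\|f_i\|_{\ell^{q'}_\xi L^2_\tau}=\|u_i\|_{X^{\frac12,\frac12}_{q,2}}$ (with $r'$ in place of $q'$ for the third factor), and reduce \eqref{eq:tri_x2} to an $\ell^{r'}_\xi L^1_\tau$ bound for
\[
  \lb\xi\rb^{\frac12}\lb\sigma_0\rb^{-1+\delta}
  \sum_{\substack{\xi=\xi_1+\xi_2+\xi_3\\ \xi_1\neq\xi,\ \xi_2\neq\xi}}
  \frac{|\xi_3|}{\lb\xi_1\rb^{\frac12}\lb\xi_2\rb^{\frac12}\lb\xi_3\rb^{\frac12}}
  \int_{\tau=\tau_1+\tau_2+\tau_3}\prod_{i=1}^3\frac{f_i(\xi_i,\tau_i)}{\lb\sigma_i\rb^{\frac12}}\,d\tau_1\,d\tau_2 ,
\]
with $\sigma_0=\tau+\xi^2$, $\sigma_i=\tau_i+\xi_i^2$ $(i=1,2)$ and $\sigma_3=\tau_3-\xi_3^2$.

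The core of the argument concerns the factor $|\xi_3|$: on the torus there is no smoothing, so this whole derivative must be absorbed using only the resonance identity \eqref{eq:rr}, $|\xi-\xi_1||\xi-\xi_2|\ls\max_i\lb\sigma_i\rb$. I would decompose according to which $\lb\sigma_i\rb$ is maximal and to whether $|\xi\xi_3|\ls|\xi_1\xi_2|$ (the eight cases of Theorem \ref{thm:tri_x1}). In the subcase $|\xi\xi_3|\ls|\xi_1\xi_2|$ the frequency weight $\lb\xi\rb^{\frac12}|\xi_3|/\prod\lb\xi_i\rb^{\frac12}$ is already $\ls 1$ and no resonance is needed; in the complementary subcase one spends a power of $\lb\sigma_{\max}\rb$ via \eqref{eq:rr} to dominate it. Here the two proofs genuinely part. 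When $\sigma_0$ is \emph{not} maximal, the full weight $\lb\sigma_0\rb^{-1+\delta}$ is free: writing $\lb\sigma_0\rb^{-1+\delta}=\lb\sigma_0\rb^{-\frac12-\eps}\lb\sigma_0\rb^{-\frac12+\delta+\eps}$ and applying Cauchy--Schwarz in $\tau$ (using $\lb\sigma_0\rb^{-\frac12-\eps}\in L^2_\tau$) reduces the $L^1_\tau$ bound to the $L^2_\tau$ estimate at modulation $-\frac12+\delta+\eps$, i.e.\ to the computation underlying Theorem \ref{thm:tri_x1}. The new case is $\sigma_0$ maximal: there $\lb\sigma_0\rb$ is needed for the resonance and cannot be split off. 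I would instead apply Cauchy--Schwarz directly in the $\tau_1,\tau_2,\tau_3$-convolution to peel off $\prod_i\|f_i(\xi_i,\cdot)\|_{L^2_\tau}$, leaving the convolution of the modulation weights $\prod_i\lb\sigma_i\rb^{-1}$, which Lemma \ref{lem:fam2.12} evaluates to an integrable-in-$\tau$ profile; the resonance relation \eqref{eq:rr} and, finally, the refined divisor estimate of Section \ref{sect:nodc} then close the frequency sum.

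The step I expect to be the main obstacle is exactly this $\sigma_0$-dominant, $|\xi\xi_3|\gs|\xi_1\xi_2|$ case, where the single weight $\lb\sigma_0\rb^{-1}$ must simultaneously supply the resonance gain that controls the whole derivative $\xi_3$ \emph{and} the decay needed for $L^1_\tau$-integrability. This is the same borderline that fixes the range: the three half-powers of modulation on the right have to be distributed as $\prod_i\lb\sigma_i\rb^{1/6}$, pushing the admissible exponent in the divisor sum of Section \ref{sect:nodc} to its limit and forcing $r>\frac43$ (and, as noted after Proposition \ref{prop:nls_christ}, the choice of the fourth parameter $p=2$). Verifying that the divisor count still closes with the weight $\lb\sigma_0\rb^{-1}$ split correctly between ``resonance duty'' and ``integrability duty'' is where the real care lies; the remaining cases are routine variants of the Theorem \ref{thm:tri_x1} computation.
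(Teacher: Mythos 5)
Your architecture coincides with the paper's: split $T=T^\ast+T^{\ast\ast}$ (the diagonal piece being covered by the stronger estimate \eqref{eq:add_est}), pass to a multiplier estimate, organize by the maximal modulation, and, in every case where $\sigma_0$ is \emph{not} maximal, spend $\lb\sigma_0\rb^{-\frac12-\eps}$ by Cauchy--Schwarz in $\tau$ so as to fall back on the internal estimates \eqref{eq:tri_mult_x_nu} from the proof of Theorem \ref{thm:tri_x1}; the paper words this as the embedding \eqref{eq:x_emb}, which applies precisely because those estimates carry a slack $\nu>0$ on $\lb\sigma_0\rb$. The genuine divergence is the new case, $\sigma_0$ maximal. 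The paper replaces Cauchy--Schwarz by H\"older at exponent $p=2-$, substitutes $g_j=\lb\sigma_j\rb^{0-}f_j$ with $\|g_j\|_{\ell^{q'}_\xi L^p_\tau}\ls\|f_j\|_{\ell^{q'}_\xi L^2_\tau}$, and reruns the whole Case 0.1/0.2 machinery in $L^p_\tau$; you instead keep the exponent $2$ and apply Cauchy--Schwarz jointly in $(\tau,\tau_1,\tau_2)$, so that Fubini peels off $\prod_i\|f_i(\xi_i,\cdot)\|_{L^2_\tau}$ exactly, and Lemma \ref{lem:fam2.12} evaluates the remaining weight $\int\lb\sigma_0\rb^{-1}\prod_{i=1}^3\lb\sigma_i\rb^{-1}$ to $\lb 2(\xi-\xi_1)(\xi-\xi_2)\rb^{-1+}$, i.e. to the frequency weight $\lb(\xi-\xi_1)(\xi-\xi_2)\rb^{-\frac12+}$ after the square root. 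I checked that this closes: with $\varrho'=\frac{2q'}{q'-2}\,->2$ the resulting exponent in the divisor sum exceeds $1$, so \eqref{eq:sum1} resp.\ \eqref{eq:sum01} apply, and the concluding H\"older/Young/Fubini steps are verbatim those of the paper. Your mechanism for the endpoint case is therefore a legitimate, and arguably more economical, alternative to the paper's $L^{2-}$ detour.

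Three points need attention. (i) Your opening reduction --- gaining $T^\delta$ by raising the \emph{output} modulation of a time-localized function from $-1$ to $-1+\delta$ --- is not a standard estimate at these indices: for $v(t,x)=\chi_{[-T,T]}(t)e^{i\xi_0x}$ one computes $\|v\|_{X^{\frac12,-1}_{r,\infty}}\sim \lb\xi_0\rb^{\frac12}\,T\log(1/T)$ while $T^\delta\|v\|_{X^{\frac12,-1+\delta}_{r,\infty}}\sim\lb\xi_0\rb^{\frac12}\,T$, so the claimed inequality fails by a logarithm. It can be repaired (prove the estimate at modulation $-1+\delta'$ with $\delta'>\delta$), but the clean route, and the paper's, is to harvest $T^{3\nu}$, $\nu<\frac1{3q'}$, from the modulation weights via \eqref{eq:gain_T}; your case analysis is unaffected by this change. (ii) As literally stated, ``the convolution of $\prod_{i=1}^3\lb\sigma_i\rb^{-1}$ is an integrable-in-$\tau$ profile'' is false: Lemma \ref{lem:fam2.12} gives $\lb\sigma^{(0)}_{res}\rb^{-1+}$, whose square root is not in $L^1_\tau$; integrability is obtained only after pairing with the leftover $\lb\sigma_0\rb^{-\frac12}$, i.e.\ $\tau$ must itself be one of the Cauchy--Schwarz variables. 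Your phrase ``peel off $\prod_i\|f_i(\xi_i,\cdot)\|_{L^2_\tau}$'' presupposes exactly this, so the idea is right, but the bookkeeping must include $\tau$. (iii) Your closing diagnosis misplaces the threshold: the $\sigma_0$-maximal case is \emph{not} where $q>\frac43$ is forced --- in the paper's table the Cases 0.1/0.2 need only $q>1$, and your own divisor sum has room for all $q'>2$. The restriction $q>\frac43$ enters through the inherited cases where an input modulation is maximal (Cases 1--3), hence it is decided in Theorem \ref{thm:tri_x1} and not in the new endpoint case treated here.
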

The above estimates are sharp with respect to the lower threshold on
$r$ within the full scale of spaces
$X^{\frac{1}{2},\frac{1}{2}}_{r,p}$.  Note that in particular the
estimates fail to hold in the endpoint case $r=\frac{4}{3}$.
\begin{remark}\label{rem:counter}
  For all $b\leq 0$, $1 \leq r \leq \frac{4}{3}$ and $1\leq p,q \leq
  \infty$ the estimate
  \begin{equation}\label{eq:counter}
    \|T(u_1,u_2,u_3)\|_{X^{\frac{1}{2},b}_{r,p}} \ls
    \prod_{i=1}^3 \|u_i\|_{X^{\frac{1}{2},\frac{1}{2}}_{r,q} }
  \end{equation}
  is false.
\end{remark}

We also consider the quintilinear expression defined as
\begin{equation}\label{eq:mod_quinti}
  \widehat{Q(u_1,\ldots,u_5)}(\xi)
  =(2\pi)^{-2}
  \sum_{\ast\ast}
  \widehat{u_1}(\xi_1) \widehat{\overline{u_2}}(\xi_2)
  \widehat{u_3}(\xi_3) \widehat{\overline{u_4}}(\xi_4) \widehat{u_5}(\xi_5)
\end{equation}
where we suppressed the $t$ dependence and $\ast\ast$ is shorthand for
summation over the subset of $\Z^5$ given by the restrictions
\begin{equation*}
  \xi=\xi_1+\ldots+\xi_5 \; ;\; \xi_1+\ldots+\xi_4\not=0\; ; \;
  \xi_1+\xi_2\not=0 \; ;\; \xi_3+\xi_4 \not=0
\end{equation*}
\begin{theorem}\label{thm:quinti}
  Let $\frac{4}{3}<q\leq r \leq 2$ and $b>\frac{1}{6}+\frac{1}{3q}$.
  Then,
  \begin{equation}\label{eq:quinti_gen}
    \|u_1\overline{u}_2u_3\overline{u}_4u_5\|_{X^{\frac{1}{2},-b}_{r,2}} \ls \sum_{k=1}^5 
    \|u_k\|_{X^{\frac{1}{2},b}_{r,2}} 
    \prod_{\genfrac{}{}{0pt}{}{1 \leq i \leq 5}{i \not= k}}
    \|u_i\|_{X^{\frac{1}{2},b}_{q,2}}
  \end{equation}
  Additionally assume that for $0<T \leq 1$ we have $\supp(u_i)
  \subset \{(t,x) \mid |t| \leq T\}$ and $0 \leq \delta<\frac{2}{q'}$.
  Then,
  \begin{equation}\label{eq:quinti}
    \|u_1\overline{u}_2u_3\overline{u}_4u_5\|_{X^{\frac{1}{2},-\frac{1}{2}}_{r,2}
      \cap 
      X^{\frac{1}{2},-1}_{r,\infty}} \ls T^\delta \sum_{k=1}^5 
    \|u_k\|_{X^{\frac{1}{2},\frac{1}{2}}_{r,2}} 
    \prod_{\genfrac{}{}{0pt}{}{1 \leq i \leq 5}{i \not= k}}
    \|u_i\|_{X^{\frac{1}{2},\frac{1}{2}}_{q,2}}
  \end{equation}
  and
  \begin{equation}\label{eq:quinti_a}
    \|Q(u_1,u_2,u_3,u_4,u_5)\|_{X^{\frac{1}{2},-\frac{1}{2}}_{r,2}
      \cap 
      X^{\frac{1}{2},-1}_{r,\infty}} \ls T^\delta \sum_{k=1}^5 
    \|u_k\|_{X^{\frac{1}{2},\frac{1}{2}}_{r,2}} 
    \prod_{\genfrac{}{}{0pt}{}{1 \leq i \leq 5}{i \not= k}}
    \|u_i\|_{X^{\frac{1}{2},\frac{1}{2}}_{q,2}}
  \end{equation}
\end{theorem}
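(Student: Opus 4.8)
The plan is to prove the three inequalities in the order \eqref{eq:quinti_gen}, \eqref{eq:quinti}, \eqref{eq:quinti_a}, treating \eqref{eq:quinti_gen} as the analytic heart and obtaining the other two by soft arguments. For \eqref{eq:quinti_gen} I would first dualize, expressing the $X^{\frac12,-b}_{r,2}$-norm through the duality between $\ell^{r'}_\xi L^2_\tau$ and $\ell^{r}_\xi L^2_\tau$ and applying Plancherel in $\tau$, so that the claim becomes a bound for the $6$-linear form obtained by coupling the space-time Fourier transform of $u_1\overline u_2u_3\overline u_4u_5$ against a dual function. Writing $\mathcal Fu_i=\lb\xi_i\rb^{-\frac12}\lb\sigma_i\rb^{-b}f_i$ (with the obvious sign modification for the conjugated factors), so that $\|f_i\|_{\ell^{q'}_\xi L^2_\tau}=\|u_i\|_{X^{\frac12,b}_{q,2}}$ and similarly with $r$, the output derivative is distributed by $\lb\xi\rb^{\frac12}\ls\sum_i\lb\xi_i\rb^{\frac12}$; in the summand where $\xi_k$ is largest the weight $\lb\xi_k\rb^{\frac12}$ cancels $\lb\xi_k\rb^{-\frac12}$ and leaves the four genuine smoothing factors $\prod_{i\neq k}\lb\xi_i\rb^{-\frac12}$. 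This is exactly the mechanism producing the sum $\sum_{k=1}^5$ on the right, with $u_k$ carrying the $\widehat{H}^{\frac12}_r$-norm and the others only the weaker $\widehat{H}^{\frac12}_q$-norm.

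With the derivative disposed of, I would carry out the $\tau$-integrations by repeated use of the one-dimensional convolution bound of Lemma \ref{lem:fam2.12}, using a resonance identity (the quintic analogue of \eqref{eq:rr}) to convert surplus modulation decay into frequency decay wherever the four smoothing factors alone are insufficient. This reduces matters to a purely spatial five-fold sum over $\xi_1+\dots+\xi_5=\xi$ weighted by $\prod_{i\neq k}\lb\xi_i\rb^{-\frac12}$, which I would estimate in $\ell^{r'}_\xi$ by Hölder in the frequency variables together with the refined number-of-divisors estimate of Section \ref{sect:nodc}, playing the role of the periodic $L^6$ Strichartz inequality. The balance of Hölder exponents in the $\tau$- and $\xi$-convolutions is what fixes the threshold $b>\frac16+\frac1{3q}$, and it is precisely the counting bound that forces $q>\frac43$.

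To pass from \eqref{eq:quinti_gen} to \eqref{eq:quinti} I would invoke \eqref{eq:quinti_gen} with $b$ just above $\frac16+\frac1{3q}$ and exploit the support hypothesis through the standard time-localization estimate $\|u_i\|_{X^{\frac12,b}_{\cdot,2}}\ls T^{\frac12-b}\|u_i\|_{X^{\frac12,\frac12}_{\cdot,2}}$. Since the five inputs are supported in $\{|t|\le T\}$ the product is as well, so this estimate applies to the five factors \emph{and} to the output modulation; the resulting six gains of size $\approx\frac1{3q'}$ combine to $\frac2{q'}$, giving the factor $T^\delta$ in the full range $0\le\delta<\frac2{q'}$. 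The target space $X^{\frac12,-\frac12}_{r,2}$ is controlled for free, since $-\frac12\le -b$, while the companion component $X^{\frac12,-1}_{r,\infty}$ is recovered from the embedding \eqref{eq:x_emb}. Finally, \eqref{eq:quinti_a} is deduced from \eqref{eq:quinti} by writing $Q(u_1,\dots,u_5)$ as the unrestricted product $u_1\overline u_2u_3\overline u_4u_5$ minus the degenerate contributions excluded by $\ast\ast$, namely those with $\xi_1+\xi_2=0$, $\xi_3+\xi_4=0$, or $\xi_1+\dots+\xi_4=0$. On each such locus one of the pairings factors out a scalar in $t$ controlled by the $L^2$-norms, hence by the $\widehat{H}^{\frac12}_q$-norms via the embedding $\widehat{H}^{\frac12}_q\hookrightarrow L^2$ (valid for $q>1$), leaving a derivative-free trilinear expression that is handled by the techniques of Section \ref{sect:tri}, a fortiori simpler than Theorems \ref{thm:tri_x1} and \ref{thm:tri_x2}.

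I expect the main obstacle to be the spatial five-fold sum in \eqref{eq:quinti_gen}: for $r<2$ the $\ell^{r'}_\xi$-norm (with $r'>2$) of a convolution cannot be closed by Cauchy--Schwarz, and only the four available half-derivatives of smoothing together with the sharp divisor count of Section \ref{sect:nodc} keep it finite; this is where the endpoint $r=\frac43$ is genuinely lost. A secondary but delicate point is the bookkeeping that yields the \emph{sharp} lifespan exponent $\delta<\frac2{q'}$ rather than a smaller power, which relies on distributing the time gain optimally over the five inputs and the output modulation.
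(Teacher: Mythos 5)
Your outer reductions are exactly the paper's: distributing the half-derivative via $\lb\xi\rb^{\frac12}\ls\sum_{k=1}^5\lb\xi_k\rb^{\frac12}$ to generate the sum over $k$, passing from \eqref{eq:quinti_gen} to \eqref{eq:quinti} by six applications of the localization gain \eqref{eq:gain_T} with $\nu=\tfrac1{3q'}-$ (note $\tfrac12-(\tfrac16+\tfrac1{3q})=\tfrac1{3q'}$, so five inputs plus the output modulation give $T^{\frac2{q'}-}$), recovering the $X^{\frac12,-1}_{r,\infty}$ component from \eqref{eq:x_emb}, and treating $Q$ through the mean-value corrections. The problem lies in the analytic core, the proof of \eqref{eq:quinti_gen}: your plan hinges on ``a resonance identity (the quintic analogue of \eqref{eq:rr})'' followed by the divisor counting of Section \ref{sect:nodc}, and \emph{no such identity exists}. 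For cubic interactions the resonance function factors, $\sigma_0-\sigma_1-\sigma_2-\sigma_3=2(\xi-\xi_1)(\xi-\xi_2)$, and this product structure is precisely what makes Corollary \ref{cor:sums} applicable: the relevant level sets are divisor sets $\{n_1n_2=r\}$. For the quintic interaction the corresponding quantity is
\begin{equation*}
  \sigma_0-\sum_{i=1}^5\sigma_i=\xi^2-\xi_1^2+\xi_2^2-\xi_3^2+\xi_4^2-\xi_5^2,
  \qquad \xi=\sum_{i=1}^5\xi_i ,
\end{equation*}
a quadratic form in five free variables whose rank exceeds two (it is at least four); hence it is not a product of two linear forms, its level sets are not divisor sets, and the counting lemmas give no purchase on the five-fold frequency sum. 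There is also a secondary obstruction: for $b$ just above $\tfrac16+\tfrac1{3q}$ one has $2b<1$, so already the first pairwise $\tau$-convolution $\int\lb s-a\rb^{-b}\lb s-b\rb^{-b}\,ds$ diverges and Lemma \ref{lem:fam2.12} cannot even be initiated; your scheme of ``repeated use'' of that lemma breaks at step one in the claimed range of $b$.

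The paper circumvents both problems by never estimating the quintic form directly. It first proves the trilinear refined $L^6$-Strichartz estimate of Lemma \ref{lem:stref} -- where the cubic resonance does factor and divisor counting applies, and where, crucially, one factor suffers no derivative loss -- then dualizes it to \eqref{eq:q3} and composes two trilinear bounds to obtain the $L^2$-based quintilinear estimate \eqref{eq:q4} of Corollary \ref{cor:qui}, in which the distinguished factor carries no derivative. Finally it interpolates (multilinear complex interpolation together with Lemma \ref{lem:interpolation}) between \eqref{eq:q4} and the elementary Sobolev--Young bound \eqref{eq:q5}, with parameter $\theta=\tfrac12$: the threshold $b>\tfrac16+\tfrac1{3q}$ is exactly the midpoint of $b_0=\tfrac2{3q}+\eps$ and $b_1=\tfrac13+\eps$, and the $\ell^{r'}_\xi$-structure enters only through the trivial endpoint \eqref{eq:q5} and the interpolation -- not through any Cauchy--Schwarz or counting argument at the quintic level. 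If you wish to rescue your outline, you must reorganize the quintic product as an iteration of cubic ones (which is what the duality/composition step accomplishes); as written, the central step cannot be executed.
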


\section{Number of divisor estimates and
  consequences}\label{sect:nodc}
\noindent
The next lemma contains estimates on the number of divisors of a given
natural number $r$.  Part \ref{it:num_div} is well-known (see
Hardy-Wright \cite{HW79}, Theorem 315).  The approach used to prove Part \ref{it:num_div_ref} of Lemma \ref{lem:divisors} is
motivated by \cite{DPST06}, Lemma 4.4.
\begin{lemma}\label{lem:divisors}
  \begin{enumerate}
  \item\label{it:num_div}  Let $\eps>0$. There exists $c_\eps>0$, such that for all $r \in \N$
    \begin{equation}\label{eq:num_div}
      \# \left\{ (n_1,n_2) \in \N^2 \mid n_1n_2=r\right\} \leq c_\eps r^\eps
    \end{equation}
  \item\label{it:num_div_ref} For all $r \in \N$
    \begin{equation}\label{eq:num_div_ref}
      \# \left\{ (n_1,n_2) \in \N^2 \mid n_1n_2=r, \quad 3|n_1-n_2|\leq
        r^{\frac{1}{6}}\right\} \leq 2
    \end{equation}
  \end{enumerate}
\end{lemma}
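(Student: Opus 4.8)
For Part \ref{it:num_div} I would not reprove anything but simply invoke the classical divisor bound: the function $d(r)=\#\{(n_1,n_2)\in\N^2\mid n_1n_2=r\}$ is multiplicative with $d(p^k)=k+1$ on prime powers, and one has $d(r)\le c_\eps r^\eps$ for every $\eps>0$. This is precisely Theorem 315 in \cite{HW79}, so a citation suffices and the real work lies in Part \ref{it:num_div_ref}.

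For Part \ref{it:num_div_ref} the plan is to convert the multiplicative constraint $n_1n_2=r$ together with the additive smallness hypothesis $3|n_1-n_2|\le r^{\frac16}$ into a statement about the \emph{sum} $s:=n_1+n_2$. The key observation is that prescribing both the product $r$ and the sum $s$ of a pair $(n_1,n_2)$ determines the unordered pair $\{n_1,n_2\}$ uniquely, since $n_1$ and $n_2$ are exactly the two roots of $t^2-st+r$. Consequently at most two ordered pairs can share a given value of $s$, and it is enough to show that under the hypotheses only one integer value of $s$ can occur.

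To pin down $s$ I would use the identity $s^2=(n_1+n_2)^2=(n_1-n_2)^2+4r$, so that $s=\sqrt{(n_1-n_2)^2+4r}$. The lower bound $s\ge 2\sqrt r$ is immediate, while the hypothesis gives $(n_1-n_2)^2\le r^{\frac13}/9$, and the elementary rationalization
\begin{equation*}
  \sqrt{4r+r^{\frac13}/9}-2\sqrt r
  =\frac{r^{\frac13}/9}{\sqrt{4r+r^{\frac13}/9}+2\sqrt r}
  \le \frac{r^{\frac13}/9}{4\sqrt r}=\frac{r^{-\frac16}}{36}
\end{equation*}
yields $2\sqrt r\le s\le 2\sqrt r+r^{-\frac16}/36$. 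Thus every admissible sum lies in a closed interval of length $r^{-\frac16}/36\le \frac1{36}<1$, which can contain at most one integer. Together with the previous paragraph this produces at most two ordered pairs, as claimed.

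The argument is entirely elementary; the only point requiring genuine care is checking that the interval constraining $s$ has length strictly below $1$ for \emph{every} $r\in\N$, which here holds with a comfortable margin. Once that is secured, the reduction \emph{the sum is forced to be unique, hence the product determines the pair up to order} closes the proof at once. I expect the underlying lattice-point picture — the factorizations of $r$ are the integer points on the hyperbola $xy=r$, and the smallness of $|n_1-n_2|$ confines them to a short arc about the diagonal — to be the geometric heart of the analogous argument in \cite{DPST06}.
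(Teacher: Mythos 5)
Your proposal is correct, and for Part \ref{it:num_div_ref} it takes a genuinely different route from the paper. The paper argues geometrically: if three lattice points lay in the set, they would form a triangle of area at least $\tfrac12$ (they cannot be collinear, as they lie on the strictly convex hyperbola $x_1x_2=r$); this triangle is trapped in the region inside the strip $|x_1-\sqrt r|\le\delta$, $\delta=\tfrac13 r^{1/6}$, below the secant line through the endpoints of the hyperbola arc and above the hyperbola itself, and an explicit computation bounds that region's area by $\tfrac13$, a contradiction. You instead exploit the algebraic identity $s^2=(n_1-n_2)^2+4r$ for $s=n_1+n_2$: the hypothesis forces
\begin{equation*}
  2\sqrt r\le s\le 2\sqrt r+\tfrac{1}{36}r^{-1/6},
\end{equation*}
an interval of length less than $1$, so the integer $s$ is unique, and since sum and product determine the unordered pair $\{n_1,n_2\}$ (as roots of $t^2-st+r$), at most two ordered pairs survive. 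Your argument is shorter, avoids the lattice-triangle area fact and the integral estimate entirely, and is actually stronger: the same computation shows the conclusion holds whenever $|n_1-n_2|<2r^{1/4}$, whereas the paper's area bound $\sim\sqrt r\,\delta^3/r$ degenerates precisely when $\delta$ exceeds the order $r^{1/6}$, so the geometric method is tied to that exponent. For the purposes of the paper either version suffices, since only the $r^{1/6}$ threshold is used in Corollary \ref{cor:sums}.
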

\begin{proof}[Proof of Part \ref{it:num_div_ref}]
  Let $r \in \N$.  Assume that there are three lattice points
  contained in the above set. Then, these points form a triangle of
  area $\mu \geq \frac{1}{2}$. This triangle is located
  \begin{enumerate}
  \item in the strip
    $$S=\left\{ (x_1,x_2) \in \R^2 \mid \sqrt{r}-\delta \leq x_1 \leq \sqrt{r}
      +\delta\right\}$$ where $\delta=\frac{1}{3}r^{\frac{1}{6}}$,
    because $|n_1-\sqrt{r}|\leq |n_1-n_2| \leq \delta$.
  \item below the line $L=\overline{P_1P_2}$ which connects the points
    $$
    P_1=\left(\sqrt{r}-\delta,\frac{r}{\sqrt{r}-\delta}\right) \quad,
    \quad P_2=\left(\sqrt{r}+\delta,\frac{r}{\sqrt{r}+\delta}\right)
    $$
  \item above the hyperbola
    $$
    H=\left\{(x_1,x_2) \in (0,\infty)^2 \mid x_1x_2=r \right\}
    $$
    because the function $x \mapsto \frac{r}{x}$ is convex for $x>0$.
  \end{enumerate}
  \begin{figure}[t]
    \begin{picture}(0,0)%
      \includegraphics{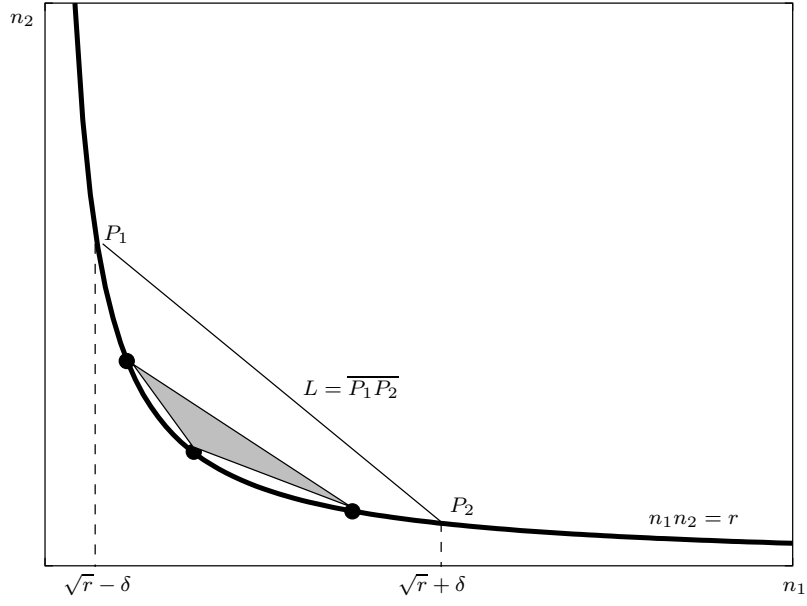}%
    \end{picture}%
    \setlength{\unitlength}{4144sp}%
    \begin{picture}(4761,3595)(976,-6772)
      \put(4600,-6500){\makebox(0,0)[lb]{\smash{\footnotesize$n_1n_2=r$}}}
      \put(5401,-6900){\makebox(0,0)[lb]{\smash{\footnotesize$n_1$}}}
      \put(3410,-6430){\makebox(0,0)[lb]{\smash{\footnotesize$P_2$}}}
      \put(2530,-5731){\makebox(0,0)[lb]{\smash{\footnotesize$L=\overline{P_1P_2}$}}}
      \put(1340,-4800){\makebox(0,0)[lb]{\smash{\footnotesize$P_1$}}}
      \put(780,-3500){\makebox(0,0)[lb]{\smash{\footnotesize$n_2$}}}
      \put(1100,-6900){\makebox(0,0)[lb]{\smash{\footnotesize$\sqrt{r}-\delta$}}}
      \put(3100,-6900){\makebox(0,0)[lb]{\smash{\footnotesize$\sqrt{r}+\delta$}}}
    \end{picture}
    \caption{\label{fig:tri} The notional triangle from the proof of
      Lemma \ref{lem:divisors}}
  \end{figure}
  Now, the area $\mu$ of the triangle is bounded from above by the
  area of the region in the strip $S$ below $L$ and above $H$, hence
  \begin{align*}
    \mu \leq &
    r\delta\left(\frac{1}{\sqrt{r}-\delta}+\frac{1}{\sqrt{r}+\delta}\right)-\int_{\sqrt{r}
      -\delta}^{\sqrt{r}+\delta} \frac{r}{x} \, dx\\
    =&
    r\left(\frac{2\sqrt{r}\delta}{r-\delta^2}-\ln\left(1+\frac{2\delta}{\sqrt{r}-\delta}
      \right)\right)\\
    \leq & r\left(\frac{2\sqrt{r}\delta}{r-\delta^2}-
      \frac{2\delta}{\sqrt{r}-\delta}+\frac{2\delta^2}{(\sqrt{r}-\delta)^2}\right)
    = \frac{4r\delta^3}{(\sqrt{r}-\delta)^2(\sqrt{r}+\delta)}\\
    \leq & \frac{4\sqrt{r}\delta^3}{(\sqrt{r}-\delta)^2} \leq
    \frac{4}{27} \frac{r}{(\sqrt{r}-\delta)^2} \leq \frac{1}{3}
  \end{align*}
  which contradicts $\mu\geq \frac{1}{2}$.
\end{proof}
Now, we use Lemma \ref{lem:divisors} to prove
\begin{corollary}\label{cor:sums}
  Fix $\eps>0$.
  \begin{enumerate}
  \item There exists $C_{\eps}>0$ such that for all $\xi \in \Z$ and
    $a \in \R$
    \begin{equation}\label{eq:sum01}
      \sum_{\genfrac{}{}{0pt}{}{\xi_1,\xi_2 \in \Z}{\xi_1,\xi_2 \not=
          \xi}}\lb \xi-\xi_1\rb^{-\eps}
      \lb \xi-\xi_2\rb^{-\eps}\lb a+2(\xi-\xi_1)(\xi-\xi_2)\rb^{-1-\eps}
      \leq C_{\eps}
    \end{equation}
  \item There exists $C_{\eps}>0$ such that for all $\xi_1 \in \Z$ and
    $a \in \R$
    \begin{equation}\label{eq:sum02}
      \sum_{\genfrac{}{}{0pt}{}{\xi,\xi_2 \in \Z}{\xi_1,\xi_2 \not=
          \xi}}\lb \xi-\xi_1\rb^{-\eps}
      \lb \xi-\xi_2\rb^{-\eps}\lb a+2(\xi-\xi_1)(\xi-\xi_2)\rb^{-1-\eps}
      \leq C_{\eps}
    \end{equation}
  \item There exists $C_{\eps}>0$ such that for all $\xi \in \Z$ and
    $a \in \R$
    \begin{equation}\label{eq:sum1}
      \sum_{\genfrac{}{}{0pt}{}{\xi_1,\xi_2 \in \Z}{\xi_1,\xi_2 \not= \xi}} \lb
      \xi_1\rb^{-\eps}\lb \xi_2\rb^{-\eps}\lb a+2(\xi-\xi_1)(\xi-\xi_2)\rb^{-1-\eps}
      \leq C_{\eps}
    \end{equation}
  \item There exists $C_{\eps}>0$ such that for all $\xi_1 \in \Z$ and
    $a \in \R$
    \begin{equation}\label{eq:sum2}
      \lb \xi_1\rb^{-\eps}\sum_{\genfrac{}{}{0pt}{}{\xi,\xi_2 \in \Z}{\xi_1,\xi_2 \not=\xi}}
      \lb \xi_2\rb^{-\eps}\lb a+2(\xi-\xi_1)(\xi-\xi_2)\rb^{-1-\eps}
      \leq C_{\eps}
    \end{equation}
  \end{enumerate}
\end{corollary}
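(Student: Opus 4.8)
The plan is to reduce all four sums to essentially one model expression by the substitution $n_i=\xi-\xi_i$ ($i=1,2$). In each case the constraints $\xi_i\neq\xi$ force $n_1,n_2\in\Z\setminus\{0\}$, and $n_1,n_2$ range bijectively over $(\Z\setminus\{0\})^2$ as the free variables run over the summation set; moreover the resonance factor becomes $\lb a+2n_1n_2\rb^{-1-\eps}$. Under this substitution the sums in \eqref{eq:sum01} and \eqref{eq:sum02} are literally identical, both equal to $\sum_{n_1,n_2\in\Z\setminus\{0\}}\lb n_1\rb^{-\eps}\lb n_2\rb^{-\eps}\lb a+2n_1n_2\rb^{-1-\eps}$, while in \eqref{eq:sum1} and \eqref{eq:sum2} the effective weight is $\lb\xi_1\rb^{-\eps}\lb\xi_2\rb^{-\eps}$ in both cases (the fixed prefactor $\lb\xi_1\rb^{-\eps}$ in \eqref{eq:sum2} simply joining $\lb\xi_2\rb^{-\eps}$), with $\xi_1-\xi_2=n_2-n_1$ throughout. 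Hence it suffices to bound two model sums, and both will ultimately be compared against
\[
  \sum_{m\in\Z}\lb a+2m\rb^{-1-\eps}\ls_\eps 1,
\]
which is uniform in $a$ because the numbers $a+2m$ form an arithmetic progression of step $2$, so only $O(N)$ of them lie in any interval of length $N$.

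For \eqref{eq:sum01}/\eqref{eq:sum02} I would use the elementary inequality $\lb n_1\rb\lb n_2\rb\geq\lb n_1n_2\rb$ to bound the weight by $\lb n_1n_2\rb^{-\eps}$, and then organize the sum according to the value of the product $m=n_1n_2$. For each fixed $m$ the number of factorizations $n_1n_2=m$ with $n_1,n_2\in\Z\setminus\{0\}$ is $\ls_\eps\lb m\rb^{\eps/2}$ by Lemma~\ref{lem:divisors}, part~\ref{it:num_div} (applied to $|m|$, up to the two admissible sign patterns). Since $\lb m\rb^{-\eps}\lb m\rb^{\eps/2}=\lb m\rb^{-\eps/2}\leq1$, the double sum collapses to $\sum_m\lb a+2m\rb^{-1-\eps}$, and the displayed bound finishes the argument. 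Only the crude divisor estimate is needed here.

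The delicate case is \eqref{eq:sum1}/\eqref{eq:sum2}, where the weight $\lb\xi_1\rb^{-\eps}\lb\xi_2\rb^{-\eps}$ sits on $\xi_1,\xi_2$ rather than on the differences, and $\xi_1\xi_2$ is unrelated to $m=n_1n_2$. The key observation I would exploit is $\lb\xi_1\rb\lb\xi_2\rb\gs\lb\xi_1-\xi_2\rb=\lb n_1-n_2\rb$ (from $\max(|\xi_1|,|\xi_2|)\geq\tfrac12|\xi_1-\xi_2|$), so the weight is $\ls\lb n_1-n_2\rb^{-\eps}$ and the sum becomes independent of $\xi$ (resp.\ $\xi_1$), which already yields the required uniformity. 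It then remains to prove the uniform-in-$m$ bound $\sum_{n_1n_2=m}\lb n_1-n_2\rb^{-\eps}\ls_\eps 1$. This is exactly where Lemma~\ref{lem:divisors}, part~\ref{it:num_div_ref} enters: for $m>0$ I would split the factorizations into those with $3|n_1-n_2|\leq|m|^{1/6}$, of which there are at most two (each contributing at most $1$), and those with $|n_1-n_2|\gs|m|^{1/6}$, for which $\lb n_1-n_2\rb^{-\eps}\ls|m|^{-\eps/6}$ while their number is $\ls_\eps|m|^{\eps/12}$ by part~\ref{it:num_div}, so that their total contribution is $\ls|m|^{-\eps/12}\leq1$; for $m<0$ the two factors have opposite signs, whence $|n_1-n_2|\gs\sqrt{|m|}$ by the arithmetic–geometric mean inequality and the estimate is even easier. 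Summing over $m$ against $\lb a+2m\rb^{-1-\eps}$ completes the proof.

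The main obstacle is precisely this last step. Without the refined count in part~\ref{it:num_div_ref}, merely discarding the weights via $\lb\xi_i\rb^{-\eps}\leq1$ and invoking only the crude divisor bound would produce $\sum_m|m|^{\eps}\lb a+2m\rb^{-1-\eps}$, i.e.\ a divergent $\sum_m|m|^{-1}$-type tail; it is the scarcity of near-square factorizations that rescues the estimate, and it is no accident that this is the sharpness-determining input.
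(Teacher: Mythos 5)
Your proof is correct and follows essentially the same route as the paper's: the same change of variables $n_i=\xi-\xi_i$, the crude divisor bound of Lemma \ref{lem:divisors}, part \ref{it:num_div}, for \eqref{eq:sum01}--\eqref{eq:sum02}, and for \eqref{eq:sum1}--\eqref{eq:sum2} the same dichotomy between near-square factorizations (controlled by part \ref{it:num_div_ref}) and far-from-square ones (controlled by part \ref{it:num_div} played against the decaying weights), everything finally summed against $\sum_{m}\lb a+2m\rb^{-1-\eps}\ls_\eps 1$. The only organizational difference is that you bound the weights by $\lb n_1-n_2\rb^{-\eps}$ at the outset via $\max(|\xi_1|,|\xi_2|)\geq\tfrac12|\xi_1-\xi_2|$, which unifies \eqref{eq:sum1} and \eqref{eq:sum2} into a single $\xi$-independent model sum (and neatly isolates the $m<0$ case by AM--GM), whereas the paper keeps the weights $\lb\xi-n_1\rb^{-\eps}\lb\xi-n_2\rb^{-\eps}$ and splits the factorizations of $r$ according to whether $6|\xi-n_1|$ or $6|\xi-n_2|$ exceeds $|r|^{1/6}$; note only the trivial adjustment that near-square pairs in $(\Z\setminus\{0\})^2$ number at most four (both-positive and both-negative sign patterns), not two, exactly as in the paper's bound $\#M_2(r)\leq 4$.
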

\begin{proof}
  The first and second part follow from the standard number of
  divisors estimate \eqref{eq:num_div} as follows: By the change of variables
  $n_1=\xi-\xi_1$, $n_2=\xi-\xi_2$ the sums in \eqref{eq:sum01} and
  \eqref{eq:sum02} are equal to
  \begin{equation*}
    \sum_{n_1,n_2 \in \Z^\ast}\lb n_1\rb^{-\eps}
    \lb n_2\rb^{-\eps}\lb a+2n_1n_2\rb^{-1-\eps}
  \end{equation*}
  This can be written as
  \begin{align*}
    &\sum_{r\in \Z^\ast} \sum_{\genfrac{}{}{0pt}{}{n_1,n_2 \in
        \Z^\ast}{n_1n_2=r}}
    \lb n_1\rb^{-\eps}\lb n_2\rb^{-\eps}\lb a+2n_1n_2\rb^{-1-\eps}\\
    \leq &\sum_{r\in \Z^\ast}\lb a+2 r\rb^{-1-\eps}
    \lb r\rb^{-2\eps} \# \left\{ (n_1,n_2) \in (\Z^\ast)^2 \mid n_1n_2=r\right\}\\
    \leq & c_\eps \sum_{r\in \Z^\ast}\lb a+2 r\rb^{-1-\eps}
  \end{align*}
  for some $c_\eps>1$. We write $a=2b+\delta$, $b \in \Z$, $\delta \in
  [0,2)$ and
  \begin{equation*}\sum_{r \in \Z^\ast} \lb a+2r\rb^{-1-\eps} \leq
    \sum_{r \in \Z}\lb
    r+\delta\rb^{-1-\eps} \leq 
    3+2\sum_{r \in \N}\lb r\rb^{-1-\eps}=:s_\eps
  \end{equation*}
  Now, the estimates \eqref{eq:sum01} and \eqref{eq:sum02} hold with
  $C_\eps:=s_\eps c_\eps$.  In order to show formula \eqref{eq:sum1}
  of the third part we use the same change of variables as above and
  obtain
  \begin{equation*}
    \sum_{r \in \Z^\ast} \lb a+2r\rb^{-1-\eps}
    \sum_{\genfrac{}{}{0pt}{}{n_1,n_2 \in \Z^\ast}{n_1 n_2 =r}} \lb
    \xi-n_1\rb^{-\eps} \lb \xi-n_2\rb^{-\eps}
  \end{equation*}
  Let $M(r)=\{(n_1,n_2) \in (\Z^\ast)^2 \mid n_1n_2=r\}$. Now, we
  split the inner sum into two parts. Let
  \begin{equation*}
    M_1(r)= \left\{(n_1,n_2) \in M(r)\mid  6|\xi-n_1| \geq |r|^{\frac{1}{6}}
      \text{ or }
      6|\xi-n_2| \geq |r|^{\frac{1}{6}} \right\}
  \end{equation*}
  and
  \begin{equation*}
    M_2(r) =\left\{(n_1,n_2) \in M(r)
      \mid 6|\xi-n_1| \leq |r|^{\frac{1}{6}}
      \text{ and } 6|\xi-n_2| \leq |r|^{\frac{1}{6}} \right\}
  \end{equation*}
  Obviously we have $M(r)=M_1(r)\cup M_2(r)$.  By Part
  \ref{it:num_div} of Lemma \ref{lem:divisors} there exists $c_\eps>1$
  such that
  $$
  \# M_1(r) \leq 2 \, \#\left\{(n_1,n_2) \in \N^2 \mid n_1n_2=r
  \right\} \leq c_\eps |r|^{\frac{\eps}{6}}
  $$
  and it follows
  $$
  \sum_{(n_1,n_2) \in M_1(r)} \lb \xi-n_1\rb^{-\eps} \lb
  \xi-n_2\rb^{-\eps} \leq 6^\eps|r|^{-\frac{\eps}{6}} \# M_1(r) \leq
  6^\eps c_\eps
  $$
  For $(n_1,n_2) \in M_2(r)$ it holds that $3|n_1-n_2|\leq
  |r|^{\frac{1}{6}}$. An application of Part \ref{it:num_div_ref} of
  Lemma \ref{lem:divisors} shows
  $$
  \sum_{(n_1,n_2) \in M_2(r)} \lb \xi-n_1\rb^{-\eps} \lb
  \xi-n_2\rb^{-\eps} \leq \#M_2(r)\leq 4
  $$
  Therefore, we see that
  $$
  \sum_{\genfrac{}{}{0pt}{}{\xi_1,\xi_2 \in \Z}{\xi_1,\xi_2 \not=
      \xi}} \lb \xi_1\rb^{-\eps}\lb \xi_2\rb^{-\eps}\lb
  a+2(\xi-\xi_1)(\xi-\xi_2)\rb^{-1-\eps} \leq (6^\eps c_\eps+4)
  \sum_{r \in \Z^\ast} \lb a+2r\rb^{-1-\eps}
  $$
  and the third part is proved with constant $C_\eps=s_\eps(6^\eps
  c_\eps+4)$.  Concerning the fourth part we proceed similarly. After
  changing variables
  \begin{align*}
    &\lb \xi_1\rb^{-\eps}\sum_{\genfrac{}{}{0pt}{}{\xi,\xi_2 \in
        \Z}{\xi_1,\xi_2 \not=\xi}}
    \lb \xi_2\rb^{-\eps}\lb a+2(\xi-\xi_1)(\xi-\xi_2)\rb^{-1-\eps}\\
    =& \lb \xi_1\rb^{-\eps}\sum_{r \in \Z^\ast} \lb a+2r\rb^{-1-\eps}
    \sum_{\genfrac{}{}{0pt}{}{n_1,n_2 \in \Z^\ast}{n_1 n_2 =r}}\lb
    \xi_1+n_1-n_2\rb^{-\eps}
  \end{align*}
  we consider two subregions of summation. In the case where
  $|r|^{\frac{1}{6}} \leq 6|\xi_1|$ or $|r|^{\frac{1}{6}}\leq
  6|\xi_1+n_1-n_2|$ we apply estimate \eqref{eq:num_div} from the
  first part of Lemma \ref{lem:divisors}, while in the remaining case
  it holds $3|n_1-n_2|\leq |r|^{\frac{1}{6}}$ and we utilize estimate
  \eqref{eq:num_div_ref} from the second part of Lemma
  \ref{lem:divisors}.
\end{proof}
\section{The proof of the trilinear estimates}\label{sect:tri}
\noindent
In this section we prove Theorem \ref{thm:tri_x1} and Theorem
\ref{thm:tri_x2}.  We will frequently use the following well-known
(see e.g.  \cite{GTV97}, Lemma 4.2) tool:
\begin{lemma}\label{lem:fam2.12}
  Let $0\leq \alpha \leq \beta$ such that $\alpha+\beta>1$ and
  $\eps>0$.  Then,
  \begin{equation*}
    \int_\R \lb s-a\rb^{-\alpha}\lb s-b\rb^{-\beta} ds \ls \lb
    a-b\rb^{-\gamma}\; , \; \gamma=
    \begin{cases}
      \alpha+\beta-1&, \beta <1\\
      \alpha-\eps &,\beta=1\\
      \alpha &,\beta >1
    \end{cases}
  \end{equation*}
\end{lemma}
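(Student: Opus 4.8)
The plan is to prove this standard convolution estimate by reducing to a single parameter and then splitting the line of integration near and far from the two ``peaks'' at $s=a$ and $s=b$. First I would translate $s\mapsto s+b$ so that, writing $c:=a-b$, the integral becomes $\int_\R \lb s-c\rb^{-\alpha}\lb s\rb^{-\beta}\,ds$ and the claim is that this is $\ls \lb c\rb^{-\gamma}$. Since the integrand is bounded and decays like $|s|^{-(\alpha+\beta)}$ at infinity, the hypothesis $\alpha+\beta>1$ guarantees convergence; in particular, for $|c|\le 1$ (so $\lb c\rb\approx 1$) the integral is bounded by an absolute constant, which already gives the estimate. Hence I may assume $|c|\ge 1$, so that $\lb c\rb\approx |c|$.

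For $|c|\ge 1$ I would decompose $\R$ into three pieces: the region $A=\{|s|\le |c|/2\}$ around the peak at the origin, the region $B=\{|s-c|\le |c|/2\}$ around the peak at $c$, and the far region $C$, i.e. the complement. On $A$ one has $|s-c|\ge |c|/2$, so $\lb s-c\rb\gtrsim |c|$ and the contribution is $\ls |c|^{-\alpha}\int_{|s|\le |c|/2}\lb s\rb^{-\beta}\,ds$. Evaluating the remaining one-dimensional integral produces exactly the three regimes: it is $\ls 1$ if $\beta>1$, $\ls \log|c|$ if $\beta=1$, and $\ls |c|^{1-\beta}$ if $\beta<1$, giving respectively $|c|^{-\alpha}$, $|c|^{-\alpha}\log|c|\ls |c|^{-(\alpha-\eps)}$, and $|c|^{-(\alpha+\beta-1)}$ --- precisely the stated $\gamma$. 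Symmetrically, on $B$ one has $\lb s\rb\gtrsim |c|$, and after the substitution $u=s-c$ the contribution is $\ls |c|^{-\beta}\int_{|u|\le |c|/2}\lb u\rb^{-\alpha}\,du$; since $\alpha\le\beta$, the resulting power $|c|^{-\beta}$, $|c|^{-\beta}\log|c|$, or $|c|^{-(\alpha+\beta-1)}$ is always dominated by $\lb c\rb^{-\gamma}$ (the logarithm in the borderline subcase $\alpha=1<\beta$ being absorbed by the strictly positive gain $|c|^{\beta-1}$).

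Finally, on the far region $C$ both weights are $\gtrsim |c|$; splitting further at $|s|\approx 2|c|$ one checks that for $|s|\gtrsim |c|$ the two brackets are comparable, $\lb s-c\rb\approx \lb s\rb\approx |s|$, so the integrand is $\approx |s|^{-(\alpha+\beta)}$ and the integral over $C$ is $\ls |c|^{1-\alpha-\beta}=|c|^{-(\alpha+\beta-1)}$. Since $\alpha+\beta-1\ge \gamma$ in all three cases, this is the strongest of the three decays and causes no loss, so collecting the contributions of $A$, $B$, $C$ finishes the proof. I expect no genuine obstacle here --- the estimate is elementary --- but the one point requiring care is the critical exponent $\beta=1$, where the integral over $A$ is only logarithmic; this is exactly the source of the $\eps$-loss in $\gamma=\alpha-\eps$, and one must keep in mind that the implied constant is then allowed to depend on $\eps$ (since $\log|c|\ls_\eps |c|^{\eps}$).
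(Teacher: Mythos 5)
Your proof is correct and complete, but note that the paper itself contains no proof of Lemma \ref{lem:fam2.12}: it is stated as a well-known tool with a citation to \cite{GTV97}, Lemma 4.2, so there is no in-paper argument to compare yours against. Your elementary reduction to $c=a-b$ with $|c|\ge 1$, followed by the three-region decomposition $\{|s|\le |c|/2\}$, $\{|s-c|\le |c|/2\}$, and the complement, is precisely the standard way such convolution estimates are proved in the literature, and all the delicate points are handled: the logarithmic loss at $\beta=1$ absorbed into $\lb c\rb^{\eps}$ with an $\eps$-dependent constant; the region-$B$ borderline $\alpha=1\le\beta$, where the logarithm is absorbed either by the $\eps$-loss (if $\beta=1$) or by the strict gain $|c|^{-(\beta-1)}$ (if $\beta>1$); and the far region, where the two brackets are genuinely comparable (indeed $|s|\le 3|s-c|$ and $|s-c|\le 3|s|$ there), so that $\alpha+\beta>1$ gives the contribution $|c|^{1-\alpha-\beta}$, which is bounded by $\lb c\rb^{-\gamma}$ since $\alpha+\beta-1\ge\gamma$ in all three cases. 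One could make explicit that the case $\alpha=\beta=1$ in region $B$ is covered by the same $\eps$-absorption as in region $A$, but this is implicit in your argument and costs nothing.
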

We write $T=T^\ast+T^{\ast\ast}$, where
\begin{align*}
  \widehat{T^{\ast}(u_1,u_2,u_3)}(\xi)&=(2\pi)^{-1}
  \sum_{\genfrac{}{}{0pt}{}{\xi=\xi_1+\xi_2+\xi_3} {\xi_1 \not= \xi,
      \xi_2\not=\xi}} \widehat{u}_1(\xi_1)\widehat{u}_2(\xi_2)i \xi_3
  \widehat{\overline{u}}_3(\xi_3)\\
  \widehat{T^{\ast\ast}(u_1,u_2,u_3)}(\xi)&=(2\pi)^{-1}
  \widehat{u}_1(\xi)\widehat{u}_2(\xi)i
  \xi\widehat{\overline{u}}_3(-\xi)
\end{align*}
\begin{proof}[Proof of Theorem \ref{thm:tri_x1}]
  To fix notation, let $\sigma_0=\tau+\xi^2$,
  $\sigma_j=\tau_j+\xi_j^2$, $j=1,2$ and $\sigma_3=\tau_3-\xi_3^2$.
  Throughout the proof the quantities $\xi_3, \tau_3$ are defined as
  $\xi_3=\xi-\xi_1-\xi_2$ and $\tau_3=\tau-\tau_1-\tau_2$,
  respectively. Let us denote $\mu=(\tau,\xi),\mu_i=(\tau_i,\xi_i)$,
  $i=1,2,3$ for brevity. By the definition of the norms we may assume
  that $\widehat{u}_j\geq 0$. Then,
  \begin{equation*}
    \|T(u_1,u_2,u_3)\|_{X^{\frac{1}{2},-\frac{1}{2}}_{r,2}} \leq
    \|T^\ast(u_1,u_2,u_3)\|_{X^{\frac{1}{2},-\frac{1}{2}}_{r,2}}+
    \|T^{\ast\ast}(u_1,u_2,u_3)\|_{X^{\frac{1}{2},-\frac{1}{2}}_{r,2}}
  \end{equation*}
  and we consider the contribution from $T^{\ast}$ first: Let $m$ be
  given by
  \begin{equation*}
    m(\mu,\mu_1,\mu_2)
    =\frac{\lb\xi\rb^{\frac{1}{2}} i\xi_3}{
      \prod_{j=1}^3\lb\xi_j\rb^{\frac{1}{2}}
      \prod_{j=0}^3\lb\sigma_j\rb^{\frac{1}{2}}}
  \end{equation*}
  Estimate \eqref{eq:tri_x1} for the $T^\ast$ contribution is
  equivalent to
  \begin{equation}\label{eq:tri_mult_x}
    \begin{split}
      &\left\|\sum_{\genfrac{}{}{0pt}{}{\xi_1,\xi_2 \in \Z} {\xi_1
            \not= \xi, \xi_2\not=\xi}}\int
        m(\mu,\mu_1,\mu_2)f_1(\mu_1)f_2(\mu_2)
        f_3(\mu_3) d\tau_1d\tau_2 \right\|_{{\ell}^{r'}_{\xi}L^2_{\tau}}\\
      \ls & T^\delta
      \|f_1\|_{{\ell}^{q'}_{\xi}L^2_{\tau}}\|f_2\|_{{\ell}^{q'}_{\xi}L^2_{\tau}}
      \|f_3\|_{{\ell}^{r'}_{\xi}L^2_{\tau}}
    \end{split}
  \end{equation}
  where we may assume $f_3(\tau_3,0)=0$.  The resonance relation
  \begin{equation}\label{eq:res_id}
    \sigma_0-\sigma_1-\sigma_2-\sigma_3=2(\xi-\xi_1)(\xi-\xi_2)=2(\xi_1\xi_2+\xi\xi_3)
  \end{equation}
  holds true, cp. \cite{Tak99,G05,H06}.  Let us first consider the
  subregion where $\lb\xi_1\rb \lb \xi_2\rb \ll \lb\xi\rb\lb\xi_3\rb$.
  Then,
  \begin{equation}\label{eq:res1}
    \lb \xi\rb^\frac{1}{2}\lb \xi_3\rb^\frac{1}{2}\ls \sum_{k=0}^3\lb
    \sigma_k \rb^{\frac{1}{2}}
  \end{equation}
  and in this subregion we control $|m|$ by the sum of all
  \begin{equation*}
    m_{k,1}(\mu,\mu_1,\mu_2)
    =\frac{1}{
      \lb\xi_1\rb^{\frac{1}{2}}\lb\xi_2\rb^{\frac{1}{2}}
      \prod_{j=0,j\not=k}^3\lb\sigma_j\rb^{\frac{1}{2}}}
  \end{equation*}
  for $k=0,\ldots,3$.  Secondly, in the subregion where
  $\lb\xi\rb\lb\xi_3\rb \ls \lb\xi_1\rb \lb \xi_2\rb$ (note that
  $\xi_1\not=\xi$, $\xi_2\not=\xi$ within the domain of summation) it
  holds
  \begin{equation}\label{eq:res2}
    \lb \xi-\xi_1\rb^\frac{1}{2}\lb \xi-\xi_2\rb^\frac{1}{2}
    \ls \sum_{k=0}^3\lb \sigma_k \rb^{\frac{1}{2}}
  \end{equation}
  and in this subregion we control $|m|$ by the sum of all
  \begin{equation*}
    m_{k,2}(\mu,\mu_1,\mu_2)
    =\frac{1}{
      \lb\xi-\xi_1\rb^{\frac{1}{2}}\lb\xi-\xi_2\rb^{\frac{1}{2}}
      \prod_{j=0,j\not=k}^3\lb\sigma_j\rb^{\frac{1}{2}}}
  \end{equation*}
  for $k=0,\ldots,3$.  According to these multipliers we subdivide the
  proof into the following cases (with a preview of the lower bound on
  $q$ for each subcase obtained by our arguments below):
  \begin{center}
    \begin{tabular}{|c|c|c|c|c|}\hline
      & $\sigma_0=\max$ & $\sigma_1=\max$&$\sigma_2=\max$ &$\sigma_3=\max$ \\\hline
      $\lb\xi_1\rb \lb \xi_2\rb \ll \lb\xi\rb\lb\xi_3\rb$& Case 0.1:
      &Case 1.1:
      &Case 2.1: & Case 3.1:\\
      &$q>1$&$q>4/3$&$q>4/3$&$q>4/3$
      \\\hline
      $\lb\xi\rb\lb\xi_3\rb \ls \lb\xi_1\rb \lb \xi_2\rb$  & Case 0.2:
      &Case 1.2:
      &Case 2.2: & Case 3.2:\\
      &$q>1$&$q>4/3$&$q>4/3$&$q>4/3$
      \\\hline
    \end{tabular}
  \end{center}
  For technical reasons, we will prove the slightly stronger estimates
  \begin{equation}\label{eq:tri_mult_x_nu}
    \begin{split}
      & \left\|\sum_{\genfrac{}{}{0pt}{}{\xi_1,\xi_2 \in \Z}{\xi_1
            \not= \xi, \xi_2\not=\xi}} \int
        m_{k,j,\nu}(\mu,\mu_1,\mu_2)f_1(\mu_1)f_2(\mu_2)
        f_3(\mu_3) d\tau_1d\tau_2 \right\|_{{\ell}^{r'}_{\xi}L^2_{\tau}}\\
      \ls &
      \|f_1\|_{{\ell}^{q'}_{\xi}L^2_{\tau}}\|f_2\|_{{\ell}^{q'}_{\xi}L^2_{\tau}}
      \|f_3\|_{{\ell}^{r'}_{\xi}L^2_{\tau}}
    \end{split}
  \end{equation}
  for any $0 \leq \nu<\frac{1}{3q'}$, $k=0,\ldots,3$ and $j=1,2$,
  where
  \begin{align*}
    m_{k,1,\nu}(\mu,\mu_1,\mu_2)&= \frac{1}{
      \lb\xi_1\rb^{\frac{1}{2}}\lb\xi_2\rb^{\frac{1}{2}}
      \prod_{j=0,j\not=k}^3\lb\sigma_j\rb^{{\frac{1}{2}}-\nu}}\\
    m_{k,2,\nu}(\mu,\mu_1,\mu_2)& =\frac{1}{
      \lb\xi-\xi_1\rb^{\frac{1}{2}}\lb\xi-\xi_2\rb^{\frac{1}{2}}
      \prod_{j=0,j\not=k}^3\lb\sigma_j\rb^{{\frac{1}{2}}-\nu}}
  \end{align*}
  for $k=0,\ldots,3$.  Clearly, \eqref{eq:tri_mult_x_nu} implies
  \eqref{eq:tri_mult_x} with $\delta=3\nu$ because
  \begin{equation}\label{eq:gain_T}
    \left\|\lb \sigma_j \rb^{-\nu} f_j\right\|_{\ell^{p'}_\xi L^2_\tau} \ls
    T^{\nu}\|f_j\|_{\ell^{p'}_\xi L^2_\tau} \quad,  1 \leq p \leq \infty
  \end{equation}
  \noindent \emph{Case 0.1:} We consider the contribution
  \begin{align*}
    t_{0,1}:= & \left\|\sum_{\genfrac{}{}{0pt}{}{\xi_1,\xi_2 \in
          \Z}{\xi_1,\xi_2 \not= \xi}}\int \frac{f_1(\mu_1)}{\lb
        \xi_1\rb^{\frac{1}{2}}\lb\sigma_1\rb^{\frac{1}{2}-\nu}}
      \frac{f_2(\mu_2)}{\lb
        \xi_2\rb^{\frac{1}{2}}\lb\sigma_2\rb^{\frac{1}{2}-\nu}}
      \frac{f_3(\mu_3)}{\lb\sigma_3\rb^{\frac{1}{2}-\nu}}
      d\tau_1d\tau_2 \right\|_{{\ell}^{r'}_{\xi}L^2_{\tau}}\\
    \ls & \left\|\sum_{\genfrac{}{}{0pt}{}{\xi_1,\xi_2 \in
          \Z}{\xi_1,\xi_2 \not= \xi}}
      \lb\xi_1\rb^{-\frac{1}{2}}\lb\xi_2\rb^{-\frac{1}{2}}
      I(\mu,\mu_1,\mu_2) \left(\int f^2_1f^2_2f^2_3 d\tau_1
        d\tau_2\right)^{\frac{1}{2}}
    \right\|_{{\ell}^{r'}_{\xi}L^2_{\tau}}
  \end{align*}
  where
  \begin{equation*}
    I(\mu,\mu_1,\mu_2):=\left(\int \frac{d\tau_1 d\tau_2}
      {(\lb\sigma_1\rb\lb\sigma_2\rb\lb\sigma_3\rb)^{1-2\nu}}\right)^{\frac{1}{2}}\\
    \ls  \lb \sigma^{(0)}_{res}\rb^{\frac{1}{q'}-\frac{1}{2}-}
  \end{equation*}
  with $\sigma^{(0)}_{res}=\tau+\xi^2-2(\xi-\xi_1)(\xi-\xi_2)$ by two
  applications of Lemma \ref{lem:fam2.12}. H\"older's inequality in
  $\xi_1,\xi_2$ leads to
  \begin{equation*}
    t_{0,1}\ls \left\|\Sigma_{0,1}(\mu)\left(\sum_{\xi_1,\xi_2 \in \Z}
        \left(\int \frac{f^2_1(\mu_1)}{\lb\xi_1\rb^{1-}}
          \frac{f^2_2(\mu_2)}{\lb\xi_2\rb^{1-}}f^2_3(\mu_3) d\tau_1 d\tau_2
        \right)^{\frac{\varrho}{2}}
      \right)^{\frac{1}{\varrho}}
    \right\|_{{\ell}^{r'}_{\xi}L^2_{\tau}}
  \end{equation*}
  where
  \begin{equation*}
    \Sigma_{0,1}(\mu):=
    \left(\sum_{\genfrac{}{}{0pt}{}{\xi_1,\xi_2 \in \Z}{\xi_1,\xi_2 \not=
          \xi}}\lb\xi_1\rb^{0-}\lb\xi_2\rb^{0-}
      \lb \sigma^{(0)}_{res}\rb^{-1-}\right)^{\frac{1}{\varrho'}}
  \end{equation*}
  for $\varrho=\frac{2q'}{q'+2}+$ and $\varrho'=\frac{2q'}{q'-2}-$.
  The sum $\Sigma_{0,1}(\mu)$ is uniformly bounded due to Corollary
  \ref{cor:sums}, estimate \eqref{eq:sum1}. Hence,
  \begin{equation*}
    t_{0,1} \ls \left\|\left(\sum_{\xi_1,\xi_2 \in \Z}
        \frac{\|f_1(\cdot,\xi_1)\|^\varrho_{L^2}}{\lb\xi_1\rb^{\frac{\varrho}{2}-}}
        \frac{\|f_2(\cdot,\xi_2)\|^\varrho_{L^2}}{\lb\xi_2\rb^{\frac{\varrho}{2}-}}
        \|f_3(\cdot,\xi_3)\|^\varrho_{L^2}\right)^{\frac{1}{\varrho}}\right\|_{{\ell}^{r'}_{\xi}}
  \end{equation*}
  by Minkowski's inequality because $\varrho\leq 2$. Now, we apply
  H\"older's inequality to obtain
  \begin{equation}\label{eq:case12_st}
    t_{0,1} \ls \left\|\left(\sum_{\xi_1,\xi_2 \in \Z}
        \frac{\|f_1(\cdot,\xi_1)\|^{r'}_{L^2}}{\lb\xi_1\rb^{1-\frac{r'}{q'}+}}
        \frac{\|f_2(\cdot,\xi_2)\|^{r'}_{L^2}}{\lb\xi_2\rb^{1-\frac{r'}{q'}+}}
        \|f_3(\cdot,\xi_3)\|^{r'}_{L^2}\right)^{\frac{1}{r'}}
    \right\|_{{\ell}^{r'}_{\xi}}
  \end{equation}
  H\"older's inequality shows
  \begin{equation*}
    \left(\sum_{\xi_i \in \Z}\|f_i(\cdot,\xi_i)\|^{r'}_{L^2}\lb\xi_i\rb^{\frac{r'}{q'}-1-}
    \right)^{\frac{1}{r'}}
    \ls      \|f_i\|_{{\ell}^{q'}_{\xi}L^2_\tau} \;\;\quad , i=1,2
  \end{equation*}
  Hence, Fubini's theorem provides
  \begin{equation*}
    t_{0,1}\ls     \|f_1\|_{{\ell}^{q'}_{\xi}L^2_{\tau}}\|f_2\|_{{\ell}^{q'}_{\xi}L^2_{\tau}}
    \|f_3\|_{{\ell}^{r'}_{\xi}L^2_{\tau}}
  \end{equation*}
  for any $1<q\leq r\leq 2$, as desired.\\
  \noindent\emph{Case 0.2:}
  We consider the contribution
  \begin{equation*}
    t_{0,2}:=\left\|\sum_{\genfrac{}{}{0pt}{}{\xi_1,\xi_2 \in \Z}{\xi_1,\xi_2 \not= \xi}}\int
      \frac{f_1(\mu_1)}{\lb
        \xi-\xi_1\rb^{\frac{1}{2}}\lb\sigma_1\rb^{\frac{1}{2}-\nu}}
      \frac{f_2(\mu_2)}{\lb\xi-\xi_2\rb^{\frac{1}{2}}
        \lb\sigma_2\rb^{\frac{1}{2}-\nu}}
      \frac{f_3(\mu_3)}{\lb\sigma_3\rb^{\frac{1}{2}-\nu}}
      d\tau_1d\tau_2 \right\|_{{\ell}^{r'}_{\xi}L^2_{\tau}}
  \end{equation*}
  By replacing the weight $\lb \xi_1\rb^{-\frac{1}{2}}\lb
  \xi_2\rb^{-\frac{1}{2}}$ by $\lb \xi-\xi_1\rb^{-\frac{1}{2}}\lb
  \xi-\xi_2\rb^{-\frac{1}{2}}$ in the expression $t_{0,1}$, the same
  arguments as in the previous case lead to
  \begin{equation*}
    t_{0,2} \ls \left\|\left(\sum_{\xi_1,\xi_2 \in \Z}
        \frac{\|f_1(\cdot,\xi_1)\|^{r'}_{L^2}}{\lb\xi-\xi_1\rb^{1-\frac{r'}{q'}+}}
        \frac{\|f_2(\cdot,\xi_2)\|^{r'}_{L^2}}{\lb\xi-\xi_2\rb^{1-\frac{r'}{q'}+}}
        \|f_3(\cdot,\xi_3)\|^{r'}_{L^2}\right)^{\frac{1}{r'}}
    \right\|_{{\ell}^{r'}_{\xi}}
  \end{equation*}
  instead of \eqref{eq:case12_st} where we used Corollary
  \ref{cor:sums}, estimate \eqref{eq:sum01} to bound the sum
  \begin{equation*}
    \Sigma_{0,2}(\mu):=
    \left(\sum_{\genfrac{}{}{0pt}{}{\xi_1,\xi_2 \in \Z}{\xi_1,\xi_2 \not= \xi}}
      \lb\xi-\xi_1\rb^{0-}\lb\xi-\xi_2\rb^{0-}
      \lb \sigma^{(0)}_{res}\rb^{-1-}\right)^{\frac{1}{\varrho'}}
  \end{equation*}
  By the change of variables $\xi \mapsto \xi-\xi_1-\xi_2$ we obtain
  \begin{equation*}
    t_{0,2} \ls \left(\sum_{\xi,\xi_1,\xi_2 \in \Z}
      \frac{\|f_1(\cdot,\xi_1)\|^{r'}_{L^2}}{\lb\xi+\xi_1\rb^{1-\frac{r'}{q'}+}}
      \frac{\|f_2(\cdot,\xi_2)\|^{r'}_{L^2}}{\lb\xi+\xi_2\rb^{1-\frac{r'}{q'}+}}
      \|f_3(\cdot,\xi)\|^{r'}_{L^2}\right)^{\frac{1}{r'}}
  \end{equation*}
  Now, we sum first in $\xi_1, \xi_2$ and use
  \begin{equation}\label{eq:unif_emb}
    \sup_{\xi \in \Z}\left(\sum_{\xi_i \in \Z}\|f_i(\cdot,\xi_i)\|^{r'}_{L^2}
      \lb\xi+\xi_i\rb^{\frac{r'}{q'}-1-}
    \right)^{\frac{1}{r'}}
    \ls      \|f_i\|_{{\ell}^{q'}_{\xi}L^2_\tau} \;\;\quad , i=1,2
  \end{equation}
  to obtain
  \begin{equation*}
    t_{0,2}\ls     \|f_1\|_{{\ell}^{q'}_{\xi}L^2_{\tau}}\|f_2\|_{{\ell}^{q'}_{\xi}L^2_{\tau}}
    \|f_3\|_{{\ell}^{r'}_{\xi}L^2_{\tau}}
  \end{equation*}
  similarly as above.
  \\
  \noindent\emph{Case 1.1:} From now on we have to restrict ourselves
  to $2\leq q' <4$.  We use duality and consider for $\varphi \in
  \ell^{r}_\xi L^2_\tau$ the quantity $t_{1,1}$ defined by
  \begin{align*}
    & \sum_{\xi \in \Z} \int
    \frac{\varphi(\mu)}{\lb\sigma_0\rb^{\frac{1}{2}-\nu}}
    \sum_{\genfrac{}{}{0pt}{}{\xi_1,\xi_2 \in \Z}{\xi_1,\xi_2 \not=
        \xi}}\int \frac{f_1(\mu_1)}{\lb \xi_1\rb^{\frac{1}{2}}}
    \frac{f_2(\mu_2)}{\lb
      \xi_2\rb^{\frac{1}{2}}\lb\sigma_2\rb^{\frac{1}{2}-\nu}}
    \frac{f_3(\mu_3)}{\lb\sigma_3\rb^{\frac{1}{2}-\nu}}
    d\tau_1d\tau_2 d\tau\\
    = &\sum_{\xi_1 \in \Z} \int f_1(\mu_1)
    \sum_{\genfrac{}{}{0pt}{}{\xi,\xi_2 \in \Z}{\xi_1,\xi_2 \not=
        \xi}}\int
    \frac{\varphi(\mu)}{\lb\sigma_0\rb^{\frac{1}{2}-\nu}}
    \frac{f_2(\mu_2)}{\lb
      \xi_2\rb^{\frac{1}{2}}\lb\sigma_2\rb^{\frac{1}{2}-\nu}}
    \frac{f_3(\mu_3)}{\lb
      \xi_1\rb^{\frac{1}{2}}\lb\sigma_3\rb^{\frac{1}{2}-\nu}} d\tau
    d\tau_2 d\tau_1
  \end{align*}
  Cauchy-Schwarz in $\tau,\tau_2$ and two applications of Lemma
  \ref{lem:fam2.12} show
  \begin{equation*}
    t_{1,1}\ls \sum_{\xi_1 \in \Z} \int
    f_1(\mu_1)
    \sum_{\genfrac{}{}{0pt}{}{\xi,\xi_2 \in \Z}{\xi_1,\xi_2 \not= \xi}}
    \lb \sigma^{(1)}_{res} \rb^{\frac{1}{q'}-\frac{1}{2}-}
    \left(\int \frac{\varphi^{2}(\mu) f^2_2(\mu_2) f^2_3(\mu_3)}{\lb\xi_1\rb\lb\xi_2\rb}
      d\tau d\tau_2 \right)^{\frac{1}{2}}
    d\tau_1
  \end{equation*}
  where with
  $\sigma^{(1)}_{res}=\tau_1+\xi_1^2+2(\xi-\xi_1)(\xi-\xi_2)$.
  H\"older's inequality in $\xi,\xi_2$ leads to
  \begin{equation*}
    t_{1,1}\ls \sum_{\xi_1 \in \Z} \int
    f_1(\mu_1)\Sigma_{1,1}(\mu_1)
    \left(\sum_{\xi,\xi_2 \in \Z}
      \left( \int 
        \frac{\varphi^2(\mu)f^2_2(\mu_2)f^2_3(\mu_3)}{\lb \xi_1\rb^{1-}\lb \xi_2\rb^{1-}}
        d\tau d\tau_2
      \right)^{\frac{\varrho}{2}}\right)^{\frac{1}{\varrho}}d\tau_1
  \end{equation*}
  for $\varrho=\frac{2q'}{q'+2}+$ and $\varrho'=\frac{2q'}{q'-2}-$,
  where
  \begin{equation*}
    \Sigma_{1,1}(\mu_1):=
    \left(\lb\xi_1\rb^{0-} \sum_{\genfrac{}{}{0pt}{}{\xi,\xi_2 \in \Z}{\xi_1,\xi_2 \not= \xi}}
      \lb\xi_2\rb^{0-}
      \lb \sigma^{(1)}_{res}\rb^{-1-}\right)^{\frac{1}{\varrho'}}
  \end{equation*}
  This is bounded by Corollary \ref{cor:sums}, estimate
  \eqref{eq:sum2}. Cauchy-Schwarz in $\tau_1$ and Minkowski's
  inequality provide
  \begin{equation}\label{eq:case22_st}
    t_{1,1}\ls \sum_{\xi_1 \in \Z}\|f_1(\cdot, \xi_1)\|_{L^2_{\tau}}
    \left(\sum_{\xi,\xi_2}
      \frac{\|\varphi(\cdot,\xi)\|^\varrho_{L^2_\tau}\|f_2(\cdot,\xi_2)\|^\varrho_{L^2_\tau}
        \|f_3(\cdot,\xi_3)\|^\varrho_{L^2_\tau}}
      {\lb \xi_1\rb^{\frac{\varrho}{2}-}\lb\xi_2\rb^{\frac{\varrho}{2}-} }
    \right)^\frac{1}{\varrho}
  \end{equation}
  Now, we use H\"older in $\xi_1$ to obtain
  \begin{equation}
    t_{1,1}\ls \|f_1\|_{\ell^{q'}_{\xi} L^2_{\tau}}
    \left\|\left(\sum_{\xi,\xi_2}
        \frac{\|f_2(\cdot,\xi_2)\|^\varrho_{L^2_\tau}\|f_3(\cdot,\xi_3)\|^\varrho_{L^2_\tau}
          \|\varphi(\cdot,\xi)\|^{\varrho}_{L^2_\tau}}
        {\lb \xi_2\rb^{\frac{\varrho}{2}-}}
      \right)^{\frac{1}{\varrho}}
    \right\|_{\ell^{\varrho'}_{\xi_1}}
  \end{equation}
  By the change of variables $\xi_2\mapsto -\xi_2, \xi \mapsto \xi$ the
  second factor equals
  \begin{equation*}
    \left\|\sum_{\xi,\xi_2}
      \frac{\|\tilde{f}_2(\cdot,\xi_2)\|^\varrho_{L^2_\tau}}{\lb \xi_2\rb^{\frac{\varrho}{2}-}}
      \|\varphi(\cdot,\xi)\|^{\varrho}_{L^2_\tau}
      \|\tilde{f}_3(\cdot,\xi_1-\xi_2-\xi)\|^\varrho_{L^2_\tau}
    \right\|_{\ell^{\frac{\varrho'}{\varrho}}_{\xi_1}}^{\frac{1}{\varrho}}
  \end{equation*}
  where $\tilde{f}_j=f_j(-\cdot,-\cdot)$, $j=2,3$. This convolution is
  bounded by
  \begin{equation*}
    \|\frac{\tilde{f}_2}{\lb \xi_2\rb^{\frac{1}{2}-}}\|_{\ell^{\varrho }_{\xi_2} L^2_\tau}
    \|\varphi\|_{\ell^{r}_\xi L^2_\tau}
    \|\tilde{f}_3\|_{\ell^{r'}_\xi L^2_\tau}
  \end{equation*}
  due to Young's inequality, because
  $$
  2+\frac{\varrho}{\varrho'}= 1+\frac{\varrho}{r}+\frac{\varrho}{r'}
  $$
  Another application of H\"older's inequality with respect to $f_2$
  yields
  \begin{equation*}
    t_{1,1}\ls
    \|\varphi\|_{\ell^{r}_\xi L^2_\tau}
    \|f_1\|_{\ell^{q'}_{\xi} L^2_\tau}\|f_2\|_{\ell^{q'}_{\xi} L^2_\tau}
    \|f_3\|_{\ell^{r'}_\xi L^2_\tau}
  \end{equation*}
  for $4/3<q\leq r\leq 2$.\\
  \noindent\emph{Case 1.2:}
  For the contribution $t_{1,2}$ the same approach as above leads to
  \begin{equation*}
    t_{1,2}\ls \sum_{\xi_1 \in \Z}\|f_1(\cdot, \xi_1)\|_{L^2_{\tau}}
    \left(\sum_{\xi,\xi_2}
      \frac{\|\varphi(\cdot,\xi)\|^\varrho_{L^2_\tau}
        \|f_2(\cdot,\xi_2)\|^\varrho_{L^2_\tau}\|f_3(\cdot,\xi_3)\|^\varrho_{L^2_\tau}}
      {\lb \xi- \xi_1\rb^{\frac{\varrho}{2}-}\lb\xi-\xi_2\rb^{\frac{\varrho}{2}-} }
    \right)^\frac{1}{\varrho}
  \end{equation*}
  instead of \eqref{eq:case22_st} by replacing $\lb\xi_1\rb$,
  $\lb\xi_2\rb$ in $t_{1,1}$ by $\lb\xi-\xi_1\rb$, $\lb\xi-\xi_2\rb$,
  respectively. The only difference is the use of Corollary
  \ref{cor:sums}, estimate \eqref{eq:sum01} to bound the sum
  \begin{equation*}
    \Sigma_{1,2}(\mu_1):=
    \left(\sum_{\genfrac{}{}{0pt}{}{\xi,\xi_2 \in \Z}{\xi_1,\xi_2 \not= \xi}}
      \lb\xi-\xi_1\rb^{0-}\lb\xi-\xi_2\rb^{0-}
      \lb \sigma^{(1)}_{res}\rb^{-1-}\right)^{\frac{1}{\varrho'}}
  \end{equation*}
  H\"older's inequality in $\xi_1$ and then in $\xi,\xi_2$ provides
  \begin{align*}
    t_{1,2} \ls& \|f_1\|_{\ell^{q'}_{\xi} L^2_{\tau}}
    \left\|\left(\sum_{\xi,\xi_2}
        \frac{\|f_2(\cdot,\xi_2)\|^\varrho_{L^2_\tau}\|f_3(\cdot,\xi_3)\|^\varrho_{L^2_\tau}
          \|\varphi(\cdot,\xi)\|^{\varrho}_{L^2_\tau}} {\lb\xi-
          \xi_1\rb^{\frac{\varrho}{2}-}\lb\xi-\xi_2\rb^{\frac{\varrho}{2}-}}
      \right)^{\frac{1}{\varrho}}
    \right\|_{\ell^{q}_{\xi_1}}\\
    \ls &\|f_1\|_{\ell^{q'}_{\xi} L^2_{\tau}}
    \left(\sum_{\xi,\xi_1,\xi_2 \in \Z}
      \frac{\|\varphi(\cdot,\xi)\|^{q}_{L^2}}{\lb\xi-\xi_1\rb^{1-\frac{q}{q'}+}}
      \frac{\|f_2(\cdot,\xi_2)\|^{q}_{L^2}}{\lb\xi-\xi_2\rb^{1-\frac{q}{q'}+}}
      \|f_3(\cdot,\xi_3)\|^{q}_{L^2}\right)^{\frac{1}{q}}
  \end{align*}
  Now, H\"older in $\xi$ and the change of variables $\xi_1\mapsto
  \xi-\xi_1$ gives
  \begin{align*}
    &\left(\sum_{\xi,\xi_1,\xi_2 \in \Z}
      \frac{\|\varphi(\cdot,\xi)\|^{q}_{L^2}}{\lb\xi-\xi_1\rb^{1-\frac{q}{q'}+}}
      \frac{\|f_2(\cdot,\xi_2)\|^{q}_{L^2}}{\lb\xi-\xi_2\rb^{1-\frac{q}{q'}+}}
      \|f_3(\cdot,\xi_3)\|^{q}_{L^2}\right)^{\frac{1}{q}}\\
    \ls&\|\varphi\|_{\ell^{r}_\xi L^2_\tau}\left\|\sum_{\xi_2 \in \Z}
      \frac{\|f_2(\cdot,\xi_2)\|^{q}_{L^2}}{\lb\xi-\xi_2\rb^{1-\frac{q}{q'}+}}
      \sum_{\xi_1 \in \Z}
      \frac{\|f_3(\cdot,\xi_1-\xi_2)\|^{q}_{L^2}}{\lb\xi_1\rb^{1-\frac{q}{q'}+}}
    \right\|_{\ell^{\frac{r}{r-q}}_\xi}^{\frac{1}{q}}
  \end{align*}
  Let us define
  \begin{equation*}
    \psi(\xi_2)=\sum_{\xi_1 \in \Z}
    \frac{\|f_3(\cdot,\xi_1-\xi_2)\|^{q}_{L^2}}{\lb\xi_1\rb^{1-\frac{q}{q'}+}}\\
  \end{equation*}
  Young's inequality shows
  \begin{equation*}
    \|\psi\|_{\ell^{\frac{r}{r-q}}}\ls \|f_3\|^{q}_{\ell^{r'}_\xi L^2_\tau}
  \end{equation*}
  and therefore
  \begin{equation*}
    \left\|\sum_{\xi_2 \in \Z}
      \frac{\|f_2(\cdot,\xi_2)\|^{q}_{L^2}}{\lb\xi-\xi_2\rb^{1-\frac{q}{q'}+}}
      \psi(\xi_2)
    \right\|_{\ell^{\frac{r}{r-q}}_\xi}^{\frac{1}{q}}
    \ls  \|f_3\|_{\ell^{r'}_\xi L^2_\tau}\left\|\sum_{\xi_2 \in \Z}
      \frac{\|f_2(\cdot,\xi_2)\|^{r}_{L^2}}{\lb\xi-\xi_2\rb^{\frac{r}{q}-\frac{r}{q'}+}}
    \right\|_{\ell^{\frac{q}{r-q}}_\xi}^{\frac{1}{r}}
  \end{equation*}
  by H\"older's inequality in $\xi_2$. Due to the fact that
  \begin{equation*}
    1+\frac{r-q}{q} = \frac{r}{q'}+\frac{r}{q}-\frac{r}{q'}
  \end{equation*}
  Young's inequality shows that
  \begin{equation*}
    t_{1,2}\ls     \|\varphi\|_{\ell^{r}_\xi L^2_\tau}
    \|f_1\|_{{\ell}^{q'}_{\xi}L^2_{\tau}}\|f_2\|_{{\ell}^{q'}_{\xi}L^2_{\tau}}
    \|f_3\|_{{\ell}^{r'}_{\xi}L^2_{\tau}}
  \end{equation*}
  for all $4/3<q\leq r\leq 2$.\\
  \noindent\emph{Case 2.1:} To control the contribution from
  $m_{2,1,\nu}$ we exchange the roles of $f_1$ and $f_2$ and the
  arguments from Case 1.1
  apply.\\
  \noindent\emph{Case 2.2:} To control the contribution from
  $m_{2,2,\nu}$ we exchange the roles of $f_1$ and $f_2$ and the
  arguments from Case 1.2
  apply.\\
  \noindent\emph{Case 3.1:} Fix $ 2 \leq q' < 4$ and
  $0 \leq \nu<\frac{1}{3q'}$. By the change of variables
  \begin{equation}\label{eq:cov}
    \mu_1 \mapsto -\mu_1, \mu_2 \mapsto -\mu_2, \mu \mapsto \mu-\mu_1-\mu_2
  \end{equation}
  we obtain for the contribution $t_{3,1}$ the identity
  \begin{align*}
    &\sum_{\xi \in \Z} \int
    \frac{\varphi(\mu)}{\lb\sigma_0\rb^{\frac{1}{2}-\nu}}
    \sum_{\genfrac{}{}{0pt}{}{\xi_1,\xi_2 \in \Z}{\xi_1,\xi_2 \not=
        \xi}} \int\int
    \frac{f_1(\mu_1)}{\lb\xi_1\rb^{\frac{1}{2}}\lb\sigma_1\rb^{\frac{1}{2}-\nu}}
    \frac{f_2(\mu_2)}{\lb\xi_2\rb^{\frac{1}{2}}\lb\sigma_2\rb^{\frac{1}{2}-\nu}}
    f_3(\mu_3)
    d\tau_1d\tau_2 d\tau\\
    =&\sum_{\xi \in \Z} \int f_3(\mu)
    \sum_{\genfrac{}{}{0pt}{}{\xi_1,\xi_2 \in \Z}{\xi_1,\xi_2 \not=
        \xi}} \int\int
    \frac{\tilde{f}_1(\mu_1)}{\lb\xi_1\rb^{\frac{1}{2}}
      \lb\tilde{\sigma}_1\rb^{\frac{1}{2}-\nu}}
    \frac{\tilde{f}_2(\mu_2)}{\lb\xi_2\rb^{\frac{1}{2}}
      \lb\tilde{\sigma}_2\rb^{\frac{1}{2}-\nu}}
    \frac{\varphi(\mu_3)}{\lb\tilde{\sigma}_3\rb^{\frac{1}{2}-\nu}}
    d\tau_1d\tau_2 d\tau
  \end{align*}
  where $\tilde{\sigma}_1=\tau_1-\xi_1^2$,
  $\tilde{\sigma}_2=\tau_2-\xi_2^2$,
  $\tilde{\sigma}_3=\tau-\tau_1-\tau_2+(\xi-\xi_1-\xi_2)^2$ and
  $\tilde{f}_j=f_j(-\cdot,-\cdot)$, $j=1,2$. Using Cauchy-Schwarz in
  $\tau_1,\tau_2$ and Lemma \ref{lem:fam2.12}, the quantity $t_{3,1}$
  is bounded by
  \begin{equation*}
    \sum_{\xi \in \Z} \int f_3(\mu) \sum_{\genfrac{}{}{0pt}{}{\xi_1,\xi_2 \in
        \Z}{\xi_1,\xi_2 \not= \xi}}
    \lb\sigma^{(3)}_{res}
    \rb^{\frac{1}{q'}-\frac{1}{2}-}
    \left(\int\int
      \frac{\tilde{f}_1^2(\mu_1)
        \tilde{f}_2^2(\mu_2)
        \varphi^2(\mu_3)}{\lb\xi_1\rb\lb\xi_2\rb}
      d\tau_1d\tau_2 \right)^{\frac{1}{2}}
    d\tau
  \end{equation*}
  where $\sigma^{(3)}_{res}=\tau-\xi^2+2(\xi-\xi_1)(\xi-\xi_2)$.
  H\"older's inequality leads to
  \begin{equation*}
    \sum_{\xi \in \Z} \int f_3(\mu) \Sigma_{3,1}(\mu)
    \left(\sum_{\genfrac{}{}{0pt}{}{\xi_1,\xi_2 \in
          \Z}{\xi_1,\xi_2 \not= \xi}}  \left(\int\int
        \frac{\tilde{f}_1^2(\mu_1)
          \tilde{f}_2^2(\mu_2)
          \varphi^2(\mu_3)}{\lb\xi_1\rb^{1-}\lb\xi_2\rb^{1-}}
        d\tau_1d\tau_2 \right)^{\frac{\varrho}{2}}\right)^{\frac{1}{\varrho}}
    d\tau
  \end{equation*}
  as an upper bound for $t_{3,1}$ with
  \begin{equation*}
    \Sigma_{3,1}(\mu)=\left(\sum_{\genfrac{}{}{0pt}{}{\xi_1,\xi_2 \in
          \Z}{\xi_1,\xi_2 \not= \xi}} \lb\xi_1\rb^{0-}
      \lb\xi_2\rb^{0-}\lb\sigma^{(3)}_{res}\rb^{-1-}\right)^{\frac{1}{\varrho'}}
  \end{equation*}
  which is uniformly bounded by Corollary \ref{cor:sums}, estimate
  \eqref{eq:sum1}. By Cauchy-Schwarz in $\tau$ and Minkowski's
  inequality $t_{3,1}$ is dominated by
  \begin{equation*}
    \sum_{\xi \in \Z} \|f_3(\cdot,\xi)\|_{L^2_\tau}
    \left(\sum_{\genfrac{}{}{0pt}{}{\xi_1,\xi_2 \in
          \Z}{\xi_1,\xi_2 \not= \xi}}
      \frac{\|\tilde{f}_1(\cdot,\xi_1)\|^\varrho_{L^2_\tau}
        \|\tilde{f}_2(\cdot,\xi_2)\|^\varrho_{L^2_\tau}
        \|\varphi(\cdot,\xi_3)\|^\varrho_{L^2_\tau}}
      {\lb\xi_1\rb^{\frac{\varrho}{2}-}\lb\xi_2\rb^{\frac{\varrho}{2}-}}
    \right)^{\frac{1}{\varrho}}
  \end{equation*}
  Now, we recall that $r \geq \varrho$ for all $4/3<q\leq r\leq 2$ and
  apply H\"older's inequality and Fubini to obtain
  \begin{align}
    t_{3,1} \ls& \|f_3\|_{\ell^{r'}_\xi L^2_\tau}
    \left\|\left(\sum_{\genfrac{}{}{0pt}{}{\xi_1,\xi_2 \in
            \Z}{\xi_1,\xi_2 \not= \xi}}
        \frac{\|\tilde{f}_1(\cdot,\xi_1)\|^\varrho_{L^2_\tau}
          \|\tilde{f}_2(\cdot,\xi_2)\|^\varrho_{L^2_\tau}
          \|\varphi(\cdot,\xi_3)\|^\varrho_{L^2_\tau}}
        {\lb\xi_1\rb^{\frac{\varrho}{2}-}\lb\xi_2\rb^{\frac{\varrho}{2}-}}
      \right)^{\frac{1}{\varrho}}
    \right\|_{{\ell}^{r}_{\xi}}\nonumber \\
    \ls & \|f_3\|_{\ell^{r'}_\xi L^2_\tau}
    \left(\sum_{\genfrac{}{}{0pt}{}{\xi,\xi_1,\xi_2 \in
          \Z}{\xi_1,\xi_2 \not= \xi}}
      \frac{\|\tilde{f}_1(\cdot,\xi_1)\|^{r}_{L^2_\tau}
        \|\tilde{f}_2(\cdot,\xi_2)\|^{r}_{L^2_\tau}
        \|\varphi(\cdot,\xi_3)\|^{r}_{L^2_\tau}}
      {\lb\xi_1\rb^{1-\frac{r}{q'}+}\lb\xi_2\rb^{1-\frac{r}{q'}+}}
    \right)^{\frac{1}{r}} \label{eq:case32_st}
  \end{align}
  Again, H\"older's inequality shows
  \begin{equation*}
    \left(\sum_{\xi_i \in \Z}\|\tilde{f}_i(\cdot,\xi_i)\|^{r}_{L^2}\lb\xi_i\rb^{\frac{r}{q'}-1-}
    \right)^{\frac{1}{r}}
    \ls      \|f_i\|_{{\ell}^{q'}_{\xi}L^2_\tau} \;\;\quad , i=1,2
  \end{equation*}
  Hence,
  \begin{equation*}
    t_{3,1}\ls     \|f_1\|_{{\ell}^{q'}_{\xi}L^2_{\tau}}\|f_2\|_{{\ell}^{q'}_{\xi}L^2_{\tau}}
    \|f_3\|_{{\ell}^{r'}_{\xi}L^2_{\tau}}\|\varphi\|_{{\ell}^{r}_{\xi}L^2_{\tau}}
  \end{equation*}
  for $4/3<q\leq r\leq 2$, as desired.\\
  \noindent\emph{Case 3.2:} To obtain the contribution $t_{3,2}$ we
  replace $\lb\xi_1\rb$ and $\lb\xi_2\rb$ in $t_{3,1}$ by
  $\lb\xi-\xi_1\rb$ and $\lb\xi-\xi_2\rb$, respectively.  The change
  of variables \eqref{eq:cov} transforms $\lb\xi-\xi_1\rb$ to
  $\lb\xi-\xi_2\rb$ and vice versa and we follow the arguments above
  to obtain
  \begin{equation*}
    t_{3,2}\ls \|f_3\|_{\ell^{r'}_\xi L^2_\tau} \left(\sum_{\genfrac{}{}{0pt}{}{\xi,\xi_1,\xi_2 \in
          \Z}{\xi_1,\xi_2 \not= \xi}}
      \frac{\|\tilde{f}_1(\cdot,\xi_1)\|^{r}_{L^2_\tau}
        \|\tilde{f}_2(\cdot,\xi_2)\|^{r}_{L^2_\tau}
        \|\varphi(\cdot,\xi_3)\|^{r}_{L^2_\tau}}
      {\lb\xi-\xi_1\rb^{1-\frac{r}{q'}+}\lb\xi-\xi_2\rb^{1-\frac{r}{q'}+}}
    \right)^{\frac{1}{r}}
  \end{equation*}
  instead of \eqref{eq:case32_st}, with the only exception that
  \begin{equation*}
    \Sigma_{3,2}(\mu):=
    \left(\sum_{\genfrac{}{}{0pt}{}{\xi_1,\xi_2 \in \Z}{\xi_1,\xi_2 \not= \xi}}
      \lb\xi-\xi_1\rb^{0-}\lb\xi-\xi_2\rb^{0-}
      \lb \sigma^{(3)}_{res}\rb^{-1-}\right)^{\frac{1}{\varrho'}}
  \end{equation*}
  is controlled by Corollary \ref{cor:sums}, estimate
  \eqref{eq:sum01}. Similar to the Case 0.2, the change of variables
  $\xi \mapsto \xi-\xi_1-\xi_2$ and H\"older's inequality
  \begin{equation*}
    \sup_{\xi\in \Z}
    \left(\sum_{\xi_i \in \Z}\|f_i(\cdot,\xi_i)\|^{r}_{L^2}\lb\xi+\xi_i\rb^{\frac{r}{q'}-1-}
    \right)^{\frac{1}{r}}
    \ls      \|f_i\|_{{\ell}^{q'}_{\xi}L^2_\tau} \;\;\quad , i=1,2
  \end{equation*}
  yield
  \begin{equation*}
    t_{3,2}\ls \|f_1\|_{{\ell}^{q'}_{\xi}L^2_{\tau}}\|f_2\|_{{\ell}^{q'}_{\xi}L^2_{\tau}}
    \|f_3\|_{{\ell}^{r'}_{\xi}L^2_{\tau}}\|\varphi\|_{{\ell}^{r}_{\xi}L^2_{\tau}}
  \end{equation*}
  and the estimate for $T^\ast$ is done.  Finally, we consider the
  harmless contribution from $T^{\ast\ast}$ and show the much stronger
  estimate
  \begin{equation}\label{eq:add_est}
    \|T^{\ast\ast}(u_1,u_2,u_3)\|_{X^{\frac{1}{2},0}_{r,2}}\ls
    \|u_1\|_{X^{\frac{1}{2},\frac{1}{3}+}_{1,2}}\|u_2\|_{X^{\frac{1}{2},\frac{1}{3}+}_{1,2}}
    \|u_3\|_{X^{\frac{1}{2},\frac{1}{3}+}_{r,2}}
  \end{equation}
  which immediately yields the desired estimate by trivial embeddings
  and \eqref{eq:gain_T}.  Indeed, Young's and H\"older's inequality
  provide
  \begin{align*}
    & \left\|\lb\xi \rb^{\frac{1}{2}} \xi \int
      \frac{f_1(\tau_1,\xi)}{\lb\xi \rb^{\frac{1}{2}}\lb
        \tau_1+\xi^2\rb^{\frac{1}{3}+}} \frac{f_2(\tau_2,\xi)}{\lb\xi
        \rb^{\frac{1}{2}}\lb \tau_2+\xi^2\rb^{\frac{1}{3}+}} \frac{
        f_3(\tau_3,-\xi)}{\lb\xi \rb^{\frac{1}{2}}\lb
        \tau_3-\xi^2\rb^{\frac{1}{3}+}} d\tau_1d\tau_2
    \right\|_{{\ell}^{r'}_{\xi}L^2_{\tau}}\\
    \ls & \left\| \left\|\frac{f_1(\tau_1,\xi)}{\lb
          \tau_1+\xi^2\rb^{\frac{1}{3}+}}
      \right\|_{L^{\frac{6}{5}}_{\tau_1}}
      \left\|\frac{f_2(\tau_2,\xi)}{\lb
          \tau_2+\xi^2\rb^{\frac{1}{3}+}}\right\|_{L^{\frac{6}{5}}_{\tau_2}}
      \left\|\frac{f_3(\tau_3,-\xi)}{\lb \tau_3-\xi^2\rb^{\frac{1}{3}+}}
      \right\|_{L^{\frac{6}{5}}_{\tau_3}}
    \right\|_{{\ell}^{r'}_{\xi}}\\
    \ls &\|f_1\|_{\ell^{\infty}_\xi
      L^{2}_{\tau}}\|f_2\|_{\ell^{\infty}_\xi L^{2}_{\tau}}
    \|f_3\|_{\ell^{r'}_\xi L^{2}_{\tau}}
  \end{align*}
  This concludes the proof of Theorem \ref{thm:tri_x1}
\end{proof}
\begin{proof}[Proof of Theorem \ref{thm:tri_x2}]
  Due to the emdedding \eqref{eq:x_emb} the estimate for
  $T^{\ast\ast}$ is already covered by \eqref{eq:add_est}.  With the
  same notation as above the estimate \eqref{eq:tri_x2} for the
  contribution $T^\ast$ is equivalent to
  \begin{equation}\label{eq:tri_mult_y}
    \begin{split}
      &\left\|\sum_{\genfrac{}{}{0pt}{}{\xi_1,\xi_2 \in \Z}{\xi_1
            \not= \xi, \xi_2\not=\xi}}\int
        n(\mu,\mu_1,\mu_2)f_1(\mu_1)f_2(\mu_2)
        f_3(\mu_3) d\tau_1d\tau_2 \right\|_{{\ell}^{r'}_{\xi}L^1_{\tau}}\\
      \ls& T^\delta \|f_1\|_{{\ell}^{q'}_{\xi}L^2_{\tau}}
      \|f_2\|_{{\ell}^{q'}_{\xi}L^2_{\tau}}\|f_3\|_{{\ell}^{r'}_{\xi}L^2_{\tau}}
    \end{split}
  \end{equation}
  where $n=\lb\sigma_0\rb^{-\frac{1}{2}}m$. We decompose
  $n_{k,j}=\lb\sigma_0\rb^{-\frac{1}{2}}m_{k,j}$ as above. Again, due
  to the embedding \eqref{eq:x_emb} the stronger estimate
  \eqref{eq:tri_mult_x_nu} already proves estimate
  \eqref{eq:tri_mult_y} for $n$ replaced by $n_{k,j,\nu}$ with
  $k=1,2,3$, $j=1,2$, corresponding to the Cases 1-3 above. Hence, it
  is enough to consider the case $k=0$ where $\lb \sigma_0 \rb$ is the
  maximal modulation.
  \\
  \noindent\emph{Case 0.1:}
  Let us fix $1<q\leq r \leq 2$ and $0 \leq \delta<\frac{1}{q'}$. We
  proceed similarly to the Case 0.1 in the proof of Theorem
  \ref{thm:tri_x1}:
  \begin{align*}
    \widetilde{t_{0,1}}:= & \left\|\lb\sigma_{0}\rb^{-\frac{1}{2}}
      \sum_{\genfrac{}{}{0pt}{}{\xi_1,\xi_2 \in \Z}{\xi_1,\xi_2 \not=
          \xi}}\int \frac{f_1(\mu_1)}{\lb
        \xi_1\rb^{\frac{1}{2}}\lb\sigma_1\rb^{\frac{1}{2}-\nu}}
      \frac{f_2(\mu_2)}{\lb
        \xi_2\rb^{\frac{1}{2}}\lb\sigma_2\rb^{\frac{1}{2}-\nu}}
      \frac{f_3(\mu_3)}{\lb\sigma_3\rb^{\frac{1}{2}-\nu}}
      d\tau_1d\tau_2 \right\|_{{\ell}^{r'}_{\xi}L^1_{\tau}}\\
    \ls& \left\| \sum_{\genfrac{}{}{0pt}{}{\xi_1,\xi_2 \in
          \Z}{\xi_1,\xi_2 \not= \xi}}\int \frac{g_1(\mu_1)}{\lb
        \xi_1\rb^{\frac{1}{2}}\lb\sigma_1\rb^{\frac{1}{2}-\nu-}}
      \frac{g_2(\mu_2)}{\lb
        \xi_2\rb^{\frac{1}{2}}\lb\sigma_2\rb^{\frac{1}{2}-\nu-}}
      \frac{g_3(\mu_3)}{\lb\sigma_3\rb^{\frac{1}{2}-\nu-}}
      d\tau_1d\tau_2 \right\|_{{\ell}^{r'}_{\xi}L^p_{\tau}}
  \end{align*}
  for any $p=2-$, where we define $g_j=\lb\sigma_j\rb^{0-}f_j$ such
  that
  \begin{equation}\label{eq:prop_g}
    \|g_j\|_{{\ell}^{q'}_{\xi}L^p_{\tau}} \ls \|f_j\|_{{\ell}^{q'}_{\xi}L^2_{\tau}}
  \end{equation}
  Now, by H\"older's inequality and two applications of Lemma
  \ref{lem:fam2.12} we get
  \begin{align*}
    \widetilde{t_{0,1}} \ls &
    \left\|\sum_{\genfrac{}{}{0pt}{}{\xi_1,\xi_2 \in \Z}{\xi_1,\xi_2
          \not= \xi}}
      \lb\xi_1\rb^{-\frac{1}{2}}\lb\xi_2\rb^{-\frac{1}{2}} \lb
      \sigma^{(0)}_{res}\rb^{\frac{1}{q'}-\frac{1}{2}-} \left(\int
        g^p_1g^p_2g^p_3 d\tau_1 d\tau_2\right)^{\frac{1}{p}}
    \right\|_{{\ell}^{r'}_{\xi}L^p_{\tau}}
  \end{align*}
  with $\sigma^{(0)}_{res}=\tau+\xi^2-2(\xi-\xi_1)(\xi-\xi_2)$.
  H\"older's inequality in $\xi_1,\xi_2$ leads to
  \begin{equation*}
    \widetilde{t_{0,1}}\ls \left\|\widetilde{\Sigma_{0,1}}(\mu)
      \left(\sum_{\xi_1,\xi_2 \in \Z}
        \left(\int \frac{g^p_1(\mu_1)}{\lb\xi_1\rb^{1-}}
          \frac{g^p_2(\mu_2)}{\lb\xi_2\rb^{1-}}g^p_3(\mu_3) d\tau_1 d\tau_2
        \right)^{\frac{\varrho}{2}}
      \right)^{\frac{1}{\varrho}}
    \right\|_{{\ell}^{r'}_{\xi}L^p_{\tau}}
  \end{equation*}
  where
  \begin{equation*}
    \widetilde{\Sigma_{0,1}}(\mu):=
    \left(\sum_{\genfrac{}{}{0pt}{}{\xi_1,\xi_2 \in \Z}{\xi_1,\xi_2 \not=
          \xi}}\lb\xi_1\rb^{0-}\lb\xi_2\rb^{0-}
      \lb \sigma^{(0)}_{res}\rb^{-1-}\right)^{\frac{1}{\varrho'}}
  \end{equation*}
  for $\varrho=\frac{2q'}{q'+2}+$ and $\varrho'=\frac{2q'}{q'-2}-$.
  The sum $\widetilde{\Sigma_{0,1}}(\mu)$ is uniformly bounded due to
  Corollary \ref{cor:sums}, estimate \eqref{eq:sum1}. Hence,
  \begin{equation*}
    \widetilde{t_{0,1}} \ls \left\|\left(\sum_{\xi_1,\xi_2 \in \Z}
        \frac{\|g_1(\cdot,\xi_1)\|^\varrho_{L^p}}{\lb\xi_1\rb^{\frac{\varrho}{2}-}}
        \frac{\|g_2(\cdot,\xi_2)\|^\varrho_{L^p}}{\lb\xi_2\rb^{\frac{\varrho}{2}-}}
        \|g_3(\cdot,\xi_3)\|^\varrho_{L^p}\right)^{\frac{1}{\varrho}}
    \right\|_{{\ell}^{r'}_{\xi}}
  \end{equation*}
  by Minkowski's inequality because $\varrho\leq p$. Now, we apply
  H\"older's inequality and Fubini as in Case 0.1 of the proof of
  Theorem \ref{thm:tri_x1} and obtain
  \begin{equation*}
    \widetilde{t_{0,1}}\ls \|g_1\|_{{\ell}^{q'}_{\xi}L^p_{\tau}}
    \|g_2\|_{{\ell}^{q'}_{\xi}L^p_{\tau}}
    \|g_3\|_{{\ell}^{r'}_{\xi}L^p_{\tau}}
  \end{equation*}
  for any $1<q \leq r\leq 2$. Finally, \eqref{eq:prop_g} proves
  the desired estimate.\\
  \noindent\emph{Case 0.2:}
  We consider $\widetilde{t_{0,2}}$ defined as
  \begin{equation*}
    \left\|\lb\sigma_0\rb^{-\frac{1}{2}}
      \sum_{\genfrac{}{}{0pt}{}{\xi_1,\xi_2 \in \Z}{\xi_1,\xi_2 \not= \xi}}\int
      \frac{f_1(\mu_1)}{\lb
        \xi-\xi_1\rb^{\frac{1}{2}}\lb\sigma_1\rb^{\frac{1}{2}-\nu}}
      \frac{f_2(\mu_2)}{\lb\xi-\xi_2\rb^{\frac{1}{2}}
        \lb\sigma_2\rb^{\frac{1}{2}-\nu}}
      \frac{f_3(\mu_3)}{\lb\sigma_3\rb^{\frac{1}{2}-\nu}}
      d\tau_1d\tau_2 \right\|_{{\ell}^{q'}_{\xi}L^1_{\tau}}
  \end{equation*}
  The same arguments as in the previous case lead to
  \begin{equation*}
    \widetilde{t_{0,2}} \ls \left\|\left(\sum_{\xi_1,\xi_2 \in \Z}
        \frac{\|g_1(\cdot,\xi_1)\|^\varrho_{L^p}}
        {\lb\xi-\xi_1\rb^{\frac{\varrho}{2}-}}
        \frac{\|g_2(\cdot,\xi_2)\|^\varrho_{L^p}}
        {\lb\xi-\xi_2\rb^{\frac{\varrho}{2}-}}
        \|g_3(\cdot,\xi_3)\|^\varrho_{L^p}\right)^{\frac{1}{\varrho}}
    \right\|_{{\ell}^{r'}_{\xi}}
  \end{equation*}
  where we used Corollary \ref{cor:sums}, estimate \eqref{eq:sum01} to
  bound the sum
  \begin{equation*}
    \widetilde{\Sigma_{0,2}}(\mu):=
    \left(\sum_{\genfrac{}{}{0pt}{}{\xi_1,\xi_2 \in \Z}{\xi_1,\xi_2 \not= \xi}}
      \lb\xi-\xi_1\rb^{0-}\lb\xi-\xi_2\rb^{0-}
      \lb \sigma^{(0)}_{res}\rb^{-1-}\right)^{\frac{1}{\varrho'}}
  \end{equation*}
  By the change of variables $\xi \mapsto \xi-\xi_1-\xi_2$ we obtain
  \begin{equation*}
    \widetilde{t_{0,2}} \ls \left(\sum_{\xi,\xi_1,\xi_2 \in \Z}
      \frac{\|g_1(\cdot,\xi_1)\|^{r'}_{L^p}}{\lb\xi+\xi_1\rb^{1-\frac{r'}{q'}+}}
      \frac{\|g_2(\cdot,\xi_2)\|^{r'}_{L^p}}{\lb\xi+\xi_2\rb^{1-\frac{r'}{q'}+}}
      \|g_3(\cdot,\xi)\|^{r'}_{L^p}\right)^{\frac{1}{r'}}
  \end{equation*}
  Now, we sum first in $\xi_1, \xi_2$ and use the analogue of
  \eqref{eq:unif_emb} for $\|g_i\|_{L^p_\tau}$ ($i=1,2$) to obtain
  \begin{equation*}
    \widetilde{t_{0,2}}\ls  \|g_1\|_{{\ell}^{q'}_{\xi}L^p_{\tau}}
    \|g_2\|_{{\ell}^{q'}_{\xi}L^p_{\tau}}
    \|g_3\|_{{\ell}^{r'}_{\xi}L^p_{\tau}}
  \end{equation*}
  and recall the property \eqref{eq:prop_g} of $g_j$.
\end{proof}

\begin{proof}[Proof of Remark \ref{rem:counter}]
  Assume that the estimate \eqref{eq:counter} is valid for some $b\leq
  0$, $1 \leq r \leq \frac{4}{3}$ and $1\leq p,q \leq \infty$.  Then
  for all $f_i \in \ell^{r'}_{\xi}L^{q'}_{\tau}$ ($1 \leq i \leq 3$)
  and $f_0 \in \ell^{r}_{\xi}L^{p}_{\tau}$ we have
  \begin{equation}\label{eq:counter2}
    \sum_{\genfrac{}{}{0pt}{}{\xi,\xi_1,\xi_2 \in \Z}{\xi_1 \not= \xi,\xi_2\not=\xi}}
    \int
    \frac{\langle \xi \rangle^{\frac{1}{2}}f_0(\mu)f_1(\mu_1)f_2(\mu_2)
      |\xi_3|f_3(\mu_3)}
    {\langle \sigma_0\rangle^{-b}
      \langle \xi_1 \rangle^{\frac{1}{2}}\langle \sigma_1\rangle^{\frac{1}{2}}
      \langle \xi_2 \rangle^{\frac{1}{2}}\langle \sigma_2\rangle^{\frac{1}{2}}
      \langle \xi_3 \rangle^{\frac{1}{2}}\langle \sigma_3\rangle^{\frac{1}{2}}}
    d\tau d\tau_1 d\tau_2< \infty
  \end{equation}
  We choose
  \begin{align*}
    f_0(0,\tau)&=\chi(\tau) \text{ and } f_0(\xi,\tau)=0 \text{ for }\xi\not=0,\\
    f_2(1,\tau_2)&=\chi(\tau_2) \text{ and }
    f_2(\xi_2,\tau_2)=0 \text{ for } \xi_2\not=1,\\
    f_1(0,\tau_1)&=0 \text{ and } f_1(\xi_1,\tau_1)=\frac{\chi(\tau_1
      + (\xi_1 +1)^2)} {\langle \xi_1
      \rangle^{\frac{1}{4}}\ln^{\frac{1}{3}}(\lb \xi_1 \rb)}
    \text{ for } \xi_1\not=0,\\
    f_3(0,\tau_3)&=0 \text{ and } f_3(\xi_3,\tau_3)=\frac{\chi(\tau_3
      - \xi_3^2)} {\langle \xi_3
      \rangle^{\frac{1}{4}}\ln^{\frac{1}{3}}(\lb \xi_3 \rb)} \text{
      for } \xi_3\not=0,
  \end{align*}
  where $\chi$ denotes the characteristic function of $[-1,1]$. Then
  $f_0 \in \ell^{r}_{\xi}L^{p}_{\tau}$ and $f_2 \in
  \ell^{r'}_{\xi}L^{q'}_{\tau}$ for all $1\leq r,p,q \leq \infty$ and
  $f_{1},f_3 \in \ell^{r'}_{\xi}L^{q'}_{\tau}$ for all $r'\geq 4$ and
  $1\leq p,q \leq \infty$.  Let $ I(\xi_1)$ be defined as
  \begin{align*}
    &\int \chi(\tau) \chi(\tau_1 + (\xi_1 + 1)^2)\chi(\tau_2)
    \chi(\tau-\tau_1-\tau_2 - (\xi_1 + 1)^2) d\tau d\tau_1 d\tau_2\\
    \gs &\int \chi(\tau)\chi(\tau_1 + (\xi_1 + 1)^2)
    \chi(\tau -\tau_1- (\xi_1 + 1)^2)d\tau_1 d\tau\\
    \gs &\int \chi(\tau_1 + (\xi_1 + 1)^2) d\tau_1=2
  \end{align*}
  Due to $\lb \sigma_1 \rb^{\frac{1}{2}} \ls \lb \xi_1
  \rb^{\frac{1}{2}}$ the left hand side of (\ref{eq:counter2}) becomes
  \begin{equation*}
    \sum_{|\xi_1|\geq 1}
    \frac{I(\xi_1)}{\langle \xi_1 \rangle\ln^{\frac{2}{3}}(\lb \xi_1 \rb)}
    \\
    \gs \sum_{|\xi_1|\geq 1}
    \frac{1}{\langle \xi_1 \rangle\ln^{\frac{2}{3}}(\lb \xi_1 \rb)}
    = \infty
  \end{equation*}
  which contradicts (\ref{eq:counter2}).
\end{proof}

\section{The proof of the quintilinear estimate}\label{sect:quinti}
\noindent
Before we start with the proof of Theorem \ref{thm:quinti} we show the
following trilinear refinement of the $L^6$ Strichartz type
estimate, see \cite{Bo93a}, Proposition 2.36. The major point is, that
for one of the factors the loss of $\eps$ derivatives can be avoided.
In fact, this refinement also follows by carefully using the
decomposition arguments and the Galilean transformation in
\cite{Bo93a}, Section 5. However, we decided to present a proof based
on the representation
$\|u\|^3_{L^6_{xt}}=\|u^2\overline{u}\|_{L^2_{xt}}$ which we learned
from \cite{MV}, in combination with the estimates from Section
\ref{sect:nodc}. Similar arguments were already used in \cite{DPST06},
cf.  Proposition 4.6 and its proof.
\begin{lemma}\label{lem:stref}
  For $\frac{1}{3}<b<\frac{1}{2}$ and $s>3(\frac{1}{2}-b)$ the
  estimate
  \begin{equation}\label{eq:q1}
    \|u_1u_2\overline{u}_3\|_{L^2_{xt}} \ls \|u_1\|_{X^{s,b}}\|u_2\|_{X^{s,b}}\|u_3\|_{X^{0,b}}
  \end{equation}
  holds true.
\end{lemma}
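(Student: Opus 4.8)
The plan is to reduce the $L^2_{xt}$ estimate to a counting problem via duality and the resonance relation, then apply the refined divisor estimate from Section~\ref{sect:nodc}. Let me think about how the pieces fit together.

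First I would rewrite the left-hand side using duality: testing against $u_0 \in L^2_{xt}$ (equivalently $X^{0,0}$), the estimate \eqref{eq:q1} is equivalent to a bound on the quadrilinear form

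$$\left|\int u_0 \overline{u_1 u_2 \overline{u}_3}\,dx\,dt\right| \ls \|u_0\|_{X^{0,0}}\|u_1\|_{X^{s,b}}\|u_2\|_{X^{s,b}}\|u_3\|_{X^{0,b}}.$$

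On the Fourier side, writing $f_i(\xi_i,\tau_i) = \langle\xi_i\rangle^{s_i}\langle\sigma_i\rangle^b \widehat{u}_i$ (with $s_0=s_3=0$, $s_1=s_2=s$ and $\sigma_i = \tau_i \pm \xi_i^2$ depending on whether the factor is conjugated), this becomes a weighted convolution estimate with the constraint $\xi_0 = \xi_1+\xi_2-\xi_3$, $\tau_0=\tau_1+\tau_2-\tau_3$.

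Next I would extract the key resonance identity. With the natural sign conventions the phase combination satisfies a relation analogous to \eqref{eq:res_id}: one of the $\langle\sigma_i\rangle$ must control the resonance function $2(\xi_0-\xi_1)(\xi_0-\xi_2)$ (or the appropriate variant). This is where the gain in regularity originates. I would dualize so as to integrate out two of the time variables using Lemma~\ref{lem:fam2.12} (two applications, since $b>\frac13$ ensures $2b>\frac23$, giving integrability), producing a factor $\langle\sigma_{\mathrm{res}}\rangle^{-\gamma}$ with $\gamma$ close to $2b-\frac12$. The Cauchy--Schwarz step in the remaining frequency/time variables then leaves a lattice sum of the shape controlled by Corollary~\ref{cor:sums}: a sum over $\xi_1,\xi_2$ of $\langle\xi_1\rangle^{-2s}\langle\xi_2\rangle^{-2s}\langle a + 2(\xi-\xi_1)(\xi-\xi_2)\rangle^{-1-}$, which is uniformly bounded precisely because of the divisor count. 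The hypothesis $s>3(\frac12-b)$ is exactly what guarantees enough negative powers $\langle\xi_i\rangle^{-2s}$ survive after distributing the modulation weights so that \eqref{eq:sum1} applies.

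The main obstacle I anticipate is the \emph{bookkeeping of which modulation is largest} and how the two derivative weights $\langle\xi_1\rangle^s\langle\xi_2\rangle^s$ get distributed against the available $\langle\sigma_i\rangle^b$ factors. Since only two of the three $b$-powers can be spent on the $\tau$-integrations (to invoke Lemma~\ref{lem:fam2.12}), the third must be traded against the resonance relation to feed the divisor sum, and one must verify in each case that the leftover exponent on the resonance factor exceeds $1$ (the threshold for \eqref{eq:sum1}) while the surviving powers of $\langle\xi_1\rangle,\langle\xi_2\rangle$ remain at least $\eps$. A careful case split on $\max_i\langle\sigma_i\rangle$ handles this, but the endpoint matching of exponents — confirming $s>3(\frac12-b)$ is both necessary for and sufficient to close each case — is the delicate point. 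The refinement claimed in the statement (no $\eps$-loss on the third factor $u_3$) comes from placing the full derivative budget on $u_1,u_2$ and using the sharp divisor count of Lemma~\ref{lem:divisors}, Part~\ref{it:num_div_ref}, rather than the lossy $r^\eps$ bound, in the regime where the divisors are nearly balanced.
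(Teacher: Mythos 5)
Your overall skeleton (Cauchy--Schwarz/duality, resonance identity, two applications of Lemma~\ref{lem:fam2.12}, then a divisor sum) matches the paper's, but there is a genuine gap: you apply the divisor-sum estimate to the \emph{full} product $u_1u_2\overline{u}_3$, and this cannot work. Corollary~\ref{cor:sums}, estimate \eqref{eq:sum1}, carries the restriction $\xi_1,\xi_2\not=\xi$, and that restriction is not cosmetic: on the diagonal $\xi_1=\xi$ (or $\xi_2=\xi$) the resonance factor $2(\xi-\xi_1)(\xi-\xi_2)$ vanishes identically, so $\lb a+2(\xi-\xi_1)(\xi-\xi_2)\rb^{-1-}$ degenerates to the constant $\lb a\rb^{-1-}$, and the sum over the remaining free frequency of $\lb\xi_2\rb^{-2s}$ diverges whenever $2s\leq 1$ --- precisely the interesting regime, since $s$ may be taken arbitrarily close to $3(\tfrac12-b)$, hence close to $0$ for $b$ near $\tfrac12$. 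So the lattice sum you claim is ``uniformly bounded precisely because of the divisor count'' is in fact infinite unless the diagonal interactions are removed first. The paper does this by writing $u_1u_2\overline{u}_3=C_1(u_1,u_2,u_3)+C_2(u_1,u_2,u_3)$, where only $C_1$ (the part with $\xi_1\not=\xi$, $\xi_2\not=\xi$) is treated by the resonance/divisor argument; the diagonal part is identified via Plancherel as $C_2=u_1\dashint_0^{2\pi}u_2\overline{u}_3\,dy+u_2\dashint_0^{2\pi}u_1\overline{u}_3\,dy-u_1\ast u_2\ast\overline{u}_3$ and estimated by a separate elementary argument: H\"older/Cauchy--Schwarz, the time Sobolev embedding $\|u\|_{L^6_tL^2_x}\ls\|u\|_{X^{0,\frac13}}$, and Young's inequality for the convolution term. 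Without this decomposition and the separate treatment of $C_2$, your proof does not close.

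Two smaller inaccuracies in your accounting. First, all three modulation weights are consumed by the two $\tau$-integrations: Lemma~\ref{lem:fam2.12} applied twice yields the resonance factor with exponent $2-6b$ (i.e.\ $\gamma=6b-2$), not $\gamma\approx 2b-\tfrac12$; a single H\"older step in $(\xi_1,\xi_2)$ with exponent $(1-2s)+$, using $\frac{2-6b}{1-2s}<-1$ (equivalent to $s>3(\tfrac12-b)$), then puts the sum into the form \eqref{eq:sum1}. In particular, no case analysis on $\max_i\lb\sigma_i\rb$ is needed for this lemma; that bookkeeping belongs to Theorems~\ref{thm:tri_x1} and \ref{thm:tri_x2}. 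Second, your final remark is correct in spirit: the absence of an $\eps$-loss on $u_3$ reflects that the surviving frequency weights sit on $\xi_1,\xi_2$ rather than on $\xi-\xi_1,\xi-\xi_2$, which is exactly why \eqref{eq:sum1} requires the refined divisor count of Lemma~\ref{lem:divisors}, Part~\ref{it:num_div_ref}, in the nearly balanced regime.
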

\begin{proof}
  We rewrite $ u_1u_2\overline{u}_3=C_1(u_1,u_2,u_3)+C_2(u_1,u_2,u_3)
  $ for
  \begin{align*}
    \widehat{C_1(u_1,u_2,u_3)}(\xi)&=(2\pi)^{-1}
    \sum_{\genfrac{}{}{0pt}{}{\xi=\xi_1+\xi_2+\xi_3}{\xi_1 \not= \xi,
        \xi_2\not=\xi}} \widehat{u}_1(\xi_1)\widehat{u}_2(\xi_2)
    \widehat{\overline{u}}_3(\xi_3)\\
    \widehat{C_2(u_1,u_2,u_3)}(\xi)&=(2\pi)^{-1}
    \sum_{\genfrac{}{}{0pt}{}{\xi=\xi_1+\xi_2+\xi_3}{\xi_1 = \xi
        \text{ or } \xi_2=\xi}}
    \widehat{u}_1(\xi_1)\widehat{u}_2(\xi_2)
    \widehat{\overline{u}}_3(\xi_3)
  \end{align*}
  where we suppressed the $t$ dependence.  By Plancherel's identity we
  observe that
  \begin{equation*}
    C_2(u_1,u_2,u_3)=u_1\dashint_0^{2\pi}u_2\overline{u}_3 dy
    +u_2\dashint_0^{2\pi}u_1\overline{u}_3 dy
    -u_1\ast u_2 \ast \overline{u}_3
  \end{equation*}
  where $\ast$ denotes convolution with respect to normalized Lebesgue
  measure on $[0,2\pi]$. Clearly,
  \begin{align*}
    \left\|C_2(u_1,u_2,u_3)\right\|_{L^2_t L^2_{x}}\ls \prod_{1 \leq k
      \leq 3}\|u_k\|_{L^6_tL^2_x}\ls \prod_{1 \leq k \leq 3}
    \|u_k\|_{X^{0,\frac{1}{3}}}
  \end{align*}
  by Sobolev estimates in the time variable. For the convolution term
  we also used Young's inequality.  So it remains to prove
  (\ref{eq:q1}) with $\|u_1u_2\overline{u}_3\|_{L^2_{xt}}$ on the left
  hand side replaced by $\|C_1(u_1,u_2,u_3)\|_{L^2_{xt}}$.  Now by
  Cauchy-Schwarz' inequality and Fubini's theorem (cp. the arguments
  in the previous section) matters reduce to show that
  \[\sup_{\xi,\tau}\Sigma(\xi,\tau)<\infty\]
  where $\Sigma(\xi,\tau)$ is defined as
  \begin{equation*}
    \sum_{\genfrac{}{}{0pt}{}{\xi=\xi_1+\xi_2+\xi_3}
      {\xi_1 \not= \xi, \xi_2\not=\xi}}\langle \xi_1 \rangle^{-2s}\langle \xi_2 \rangle^{-2s}
    \int  \langle \tau_1 + \xi_1^2 \rangle^{-2b}
    \langle \tau_2 + \xi_2^2 \rangle^{-2b}\langle \tau_3 - \xi_3^2 \rangle^{-2b}d\tau_1d\tau_2.
  \end{equation*}
  Using Lemma \ref{lem:fam2.12} twice, we see that
  \begin{align*}
    \Sigma(\xi,\tau) & \ls
    \sum_{\genfrac{}{}{0pt}{}{\xi=\xi_1+\xi_2+\xi_3} {\xi_1 \not= \xi,
        \xi_2\not=\xi}}\langle \xi_1 \rangle^{-2s}\langle \xi_2
    \rangle^{-2s}
    \langle \tau + \xi^2 -2(\xi - \xi_1)(\xi - \xi_2)\rangle^{2-6b}& \\
    & \ls \left(\sum_{\genfrac{}{}{0pt}{}{\xi=\xi_1+\xi_2+\xi_3}
        {\xi_1 \not= \xi, \xi_2\not=\xi}}\langle \xi_1
      \rangle^{0-}\langle \xi_2 \rangle^{0-} \langle \tau + \xi^2
      -2(\xi - \xi_1)(\xi - \xi_2) \rangle^{\frac{2-6b}{1-2s}-}
    \right)^{(1-2s)+}&
  \end{align*}
  by H\"older's inequality. Since by assumption $\frac{2-6b}{1-2s}<
  -1$, a final application of Corollary \ref{cor:sums}, Part 3,
  completes the proof.
\end{proof}
In the $L^2_{xt}$-norm on the left hand side of (\ref{eq:q1}) we may,
of course, replace any single factor by its complex conjugate.
Especially we have
\begin{equation}\label{eq:q2}
  \|u_1u_2\overline{u}_3\|_{L^2_{xt}} \ls \|u_1\|_{X^{0,b}}\|u_2\|_{X^{s,b}}\|u_3\|_{X^{s,b}}
\end{equation}
Fixing $u_2$ and $u_3$ and considering the linear operator
\begin{equation*}
  X^{0,b} \to L^2_{xt}: \quad u_1 \mapsto u_1u_2\overline{u}_3
\end{equation*}
we obtain by duality the estimate
\begin{equation}\label{eq:q3}
  \|v\overline{u}_2u_3\|_{X^{0,-b}}\ls \|v\|_{L^2_{xt}}\|u_2\|_{X^{s,b}}\|u_3\|_{X^{s,b}}
\end{equation}
Choosing $v=u_1\overline{u}_4u_5$ and applying (\ref{eq:q2}) (and
(\ref{eq:q1}), respectively) again, we have shown the following
quintilinear estimate:
\begin{corollary}\label{cor:qui}
  Set $i=1$ or $i=4$.  For $\frac{1}{3}<b_0<\frac{1}{2}$ and
  $s_0>3(\frac{1}{2}-b_0)$ the estimate
  \begin{equation}\label{eq:q4}
    \|u_1 \overline{u}_2u_3\overline{u}_4 u_5\|_{X^{0,-b_0}} \ls 
    \|u_i\|_{X^{0,b_0}} \prod_{\genfrac{}{}{0pt}{}{k=1}{k\not=i}}^5\|u_k\|_{X^{s_0,b_0}}
  \end{equation}
  is valid.
\end{corollary}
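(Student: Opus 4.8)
The plan is to derive \eqref{eq:q4} by combining the duality estimate \eqref{eq:q3} with the two conjugation variants \eqref{eq:q1} and \eqref{eq:q2} of the trilinear $L^2_{xt}$ bound, exactly along the lines sketched before the statement. First I would regroup the quintic product, using commutativity of pointwise multiplication, as
\[
u_1\overline{u}_2u_3\overline{u}_4u_5 = v\,\overline{u}_2u_3,\qquad v:=u_1\overline{u}_4u_5 .
\]
Applying \eqref{eq:q3} with $b=b_0$ and $s=s_0$ (admissible since $\tfrac13<b_0<\tfrac12$ and $s_0>3(\tfrac12-b_0)$ are precisely the hypotheses of Lemma \ref{lem:stref}) then gives
\[
\|u_1\overline{u}_2u_3\overline{u}_4u_5\|_{X^{0,-b_0}} \ls \|v\|_{L^2_{xt}}\,\|u_2\|_{X^{s_0,b_0}}\|u_3\|_{X^{s_0,b_0}},
\]
so that matters reduce to bounding $\|v\|_{L^2_{xt}}=\|u_1\overline{u}_4u_5\|_{L^2_{xt}}$ by the product of the three remaining norms, with the designated factor $u_i$ carrying the low regularity $X^{0,b_0}$.

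The second step applies Lemma \ref{lem:stref} to $v$, and the two admissible indices are handled by its two conjugation variants. If $i=1$, the low-regularity factor occupies an unconjugated slot of $v$, so I would invoke \eqref{eq:q2} to obtain $\|u_1\overline{u}_4u_5\|_{L^2_{xt}}\ls \|u_1\|_{X^{0,b_0}}\|u_4\|_{X^{s_0,b_0}}\|u_5\|_{X^{s_0,b_0}}$. If $i=4$, the low-regularity factor occupies the conjugated slot, so I would instead invoke \eqref{eq:q1}, which places $X^{0,b_0}$ on the conjugated entry. Substituting either bound into the display above yields \eqref{eq:q4} immediately.

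The only delicate point — and the reason the admissible indices are exactly $\{1,4\}$ under this grouping — is the bookkeeping of complex conjugations. Dualizing the map $u_1\mapsto u_1u_2\overline{u}_3$ of \eqref{eq:q2} produces \eqref{eq:q3}, in which the surviving factors appear as $\overline{u}_2u_3$; this matches the conjugation pattern carried by the indices $2$ and $3$ in the original product, so supplying $u_2,u_3$ at high regularity is consistent. It then remains only to match the conjugation type of the low-regularity slot: index $1$ is unconjugated in the product and calls for \eqref{eq:q2}, whereas index $4$ is conjugated and calls for \eqref{eq:q1}. No analytic difficulty arises beyond Lemma \ref{lem:stref} itself, whose proof already absorbs the refined number-of-divisor input from Section \ref{sect:nodc}; the corollary is pure bookkeeping on top of it.
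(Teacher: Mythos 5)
Your proposal is correct and is essentially identical to the paper's own argument: both regroup the quintic product as $(u_1\overline{u}_4u_5)\,\overline{u}_2u_3$, apply the duality estimate \eqref{eq:q3} (obtained from \eqref{eq:q2}) to peel off $u_2,u_3$, and then bound $\|u_1\overline{u}_4u_5\|_{L^2_{xt}}$ by \eqref{eq:q2} when $i=1$ and by \eqref{eq:q1} when $i=4$. The conjugation bookkeeping you spell out matches the paper's intended (unwritten) verification, so nothing further is needed.
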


In order to prove Theorem \ref{thm:quinti} we shall rely on
the interpolation properties of our scale of spaces obtained by the
complex method.
\begin{lemma}\label{lem:interpolation}
  Let $s_i,b_i \in \R$, $1<r_i,p_i<\infty$ for $i=1,2$. Then,
  \begin{equation*}
    (X^{s_0,b_0}_{r_0,p_0},X^{s_1,b_1}_{r_1,p_1})_{[\theta]}=X^{s,b}_{r,p} \qquad (0<\theta<1) 
  \end{equation*}
  where
  \begin{align*}
    s=(1-\theta)s_0+\theta s_1 \quad ,& \quad  b=(1-\theta)b_0+\theta b_1 \,  ,\\
    \frac{1}{r} =\frac{1-\theta}{r_0}+\frac{\theta}{r_1}\quad ,& \quad
    \frac{1}{p} =\frac{1-\theta}{p_0}+\frac{\theta}{p_1}\,.
  \end{align*}
\end{lemma}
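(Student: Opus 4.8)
The plan is to reduce the statement to the classical complex interpolation of mixed‑norm Lebesgue spaces, and to transport the $(s,b)$--weights across the strip by an analytic family of Fourier multipliers that are unimodular on the two boundary lines. To set up, note that for every admissible quadruple the map
\begin{equation*}
  u\longmapsto w_{s,b}\,\mathcal{F}u,\qquad w_{s,b}(\xi,\tau):=\lb\tau+\xi^2\rb^{b}\lb\xi\rb^{s},
\end{equation*}
is an isometric isomorphism of $X^{s,b}_{r,p}$ onto the mixed‑norm space $\ell^{r'}_\xi L^{p'}_\tau$ (on $\Z\times\R$), since by Definition \ref{def:x} one has $\|u\|_{X^{s,b}_{r,p}}=\|w_{s,b}\mathcal{F}u\|_{\ell^{r'}_\xi L^{p'}_\tau}$. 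All the spaces $X^{s_i,b_i}_{r_i,p_i}$ embed continuously into the periodic tempered distributions, hence form a compatible couple with the common dense subspace $\test$.

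The external input is the Benedek--Panzone interpolation theorem for mixed‑norm Lebesgue spaces,
\begin{equation*}
  \left[\ell^{r_0'}_\xi L^{p_0'}_\tau,\ \ell^{r_1'}_\xi L^{p_1'}_\tau\right]_{[\theta]}
  =\ell^{r'}_\xi L^{p'}_\tau,
\end{equation*}
which is applicable because $\Z$ (counting measure) and $\R$ (Lebesgue measure) are $\sigma$--finite and $1<r_i,p_i<\infty$; the restriction to finite exponents is precisely what makes the Calderón construction endpoint‑free, and it is exactly the hypothesis of the lemma. A direct computation shows that the dual‑exponent relations $\tfrac1{r'}=\tfrac{1-\theta}{r_0'}+\tfrac{\theta}{r_1'}$ and $\tfrac1{p'}=\tfrac{1-\theta}{p_0'}+\tfrac{\theta}{p_1'}$ are equivalent to the affine relations for $r$ and $p$ asserted in the statement.

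The crux is to bridge the two different weights. For $z$ in the closed strip set $s_z=(1-z)s_0+zs_1$ and $b_z=(1-z)b_0+zb_1$, so that the affine parameters of the lemma give the factorization $w_{s,b}=w_{s_0,b_0}^{\,1-\theta}w_{s_1,b_1}^{\,\theta}$, and consider the analytic family
\begin{equation*}
  \mathcal{W}_z u:=\lb\tau+\xi^2\rb^{b_z}\lb\xi\rb^{s_z}\,\mathcal{F}u\qquad(u\in\test).
\end{equation*}
On the line $\Re z=0$ its symbol has modulus $w_{s_0,b_0}$, whence $\mathcal{W}_{it}$ is an isometry of $X^{s_0,b_0}_{r_0,p_0}$ onto $\ell^{r_0'}_\xi L^{p_0'}_\tau$; on $\Re z=1$ the symbol has modulus $w_{s_1,b_1}$, whence $\mathcal{W}_{1+it}$ is an isometry of $X^{s_1,b_1}_{r_1,p_1}$ onto $\ell^{r_1'}_\xi L^{p_1'}_\tau$. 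By Stein's interpolation theorem for analytic families the operator $\mathcal{W}_\theta$ maps $[X^{s_0,b_0}_{r_0,p_0},X^{s_1,b_1}_{r_1,p_1}]_{[\theta]}$ into $\ell^{r'}_\xi L^{p'}_\tau$ with norm at most one; since $\|\mathcal{W}_\theta u\|_{\ell^{r'}_\xi L^{p'}_\tau}=\|u\|_{X^{s,b}_{r,p}}$, this gives $\|u\|_{X^{s,b}_{r,p}}\ls\|u\|_{[\cdot]_{[\theta]}}$. Running the identical argument with the inverse family $\mathcal{V}_z g:=\mathcal{F}^{-1}(\lb\tau+\xi^2\rb^{-b_z}\lb\xi\rb^{-s_z}g)$, whose boundary symbols are again unimodular, produces the reverse inequality, and the two bounds together yield equality of the interpolation norm with the $X^{s,b}_{r,p}$--norm.

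The exponent bookkeeping and the isometry assertions are routine; the genuine technical point is the admissibility of the family $\mathcal{W}_z$, i.e.\ checking that $z\mapsto\mathcal{W}_z u$ is analytic with the growth control required by Stein's theorem for $u\in\test$, and that pointwise modulus‑one multipliers indeed act isometrically on $\ell^{r'}_\xi L^{p'}_\tau$. I expect this to be the only place demanding care. An alternative would be to invoke Calderón's weighted interpolation theorem together with the same factorization of $w_{s,b}$ directly, but the analytic‑family argument seems cleaner to write because the weight $\lb\xi\rb^{s}\lb\tau+\xi^2\rb^{b}$ splits as a product adapted to the discrete‑times‑continuous underlying measure.
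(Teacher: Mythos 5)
Your argument is correct in outline, but it takes a genuinely different route from the paper's. The paper conjugates with the free Schr\"odinger group: the single, parameter-\emph{independent} map $\mathcal{F}\circ e^{-it\partial_x^2}$, which amounts to the substitution $\tau\mapsto\tau+\xi^2$ on the Fourier side, carries every $X^{s,b}_{r,p}$ isometrically onto the \emph{weighted} mixed-norm space $\ell^{r'}_\xi(\lb\xi\rb^s;L^{p'}_\tau(\lb\tau\rb^b))$; the point of the conjugation is that the modulation weight becomes a pure $\tau$-weight, i.e.\ the total weight becomes a tensor product adapted to the iterated norm. Interpolation of the $X$-spaces is then literally interpolation of weighted vector-valued sequence and Lebesgue spaces, which the paper quotes from Bergh--L\"ofstr\"om (Theorem 5.6.3, with $2^k$ replaced by $k$, for the outer weighted $\ell^{r'}_\xi$ scale, and Theorem 5.5.3 for the inner weighted $L^{p'}_\tau$ scale). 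You instead leave the weight $\lb\tau+\xi^2\rb^{b}\lb\xi\rb^{s}$ untouched, use the parameter-\emph{dependent} isometries $u\mapsto w_{s_i,b_i}\mathcal{F}u$ onto \emph{unweighted} mixed-norm spaces, and bridge the two weights with the analytic multiplier family $\mathcal{W}_z$; this is in effect an in-line proof of the Stein--Weiss/Calder\'on weighted interpolation theorem that the paper invokes as a black box. What your route buys is independence from the tensor structure of the weight: a multiplier argument never needs the conjugation trick, since unweighted mixed-norm interpolation is insensitive to the shape of the weight function on $\Z\times\R$. What it costs are two points you should make explicit. First, the version of ``Stein's theorem'' you need is the \emph{abstract} one, in which the domain couple is $(X^{s_0,b_0}_{r_0,p_0},X^{s_1,b_1}_{r_1,p_1})$ rather than a couple of $L^p$-spaces of functions (Voigt's abstract Stein interpolation, or equivalently a direct argument inserting $z\mapsto\mathcal{W}_z f(z)$ into the Calder\'on space of the target couple --- painless here because $\mathcal{W}_z$ is multiplication by a symbol whose modulus depends only on $\Real z$, hence is uniformly bounded on the strip). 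Second, your two inequalities give equality of the norms only on a dense subclass, so the identification of the \emph{spaces} needs the density chain: $\test$ is dense in $X^{s,b}_{r,p}$ by definition, $X_0\cap X_1$ is dense in $(X_0,X_1)_{[\theta]}$ by the general theory, and one still wants density of $\test$ in $X_0\cap X_1$ (standard, via Fourier truncation). Both proofs use the hypothesis $1<r_i,p_i<\infty$ in the same place, namely the (weighted or unweighted) mixed-norm interpolation theorem.
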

\begin{proof}
  The map
  \begin{equation*}
    \mathcal{F} \circ e^{-it\partial_x^2}:\; X^{s,b}_{r,p} \to \ell^{r'}_\xi(\lb \xi\rb^s;L^{p'}_\tau(\lb\tau\rb^b))
  \end{equation*}
  is an isometric isomorphism. Here, the image space is the space of
  sequences in $\ell^{r'}_\xi$ with weight $\lb \xi\rb^s$, taking
  values in $L^{p'}_\tau$ with weight $\lb\tau\rb^b$ (with the natural
  norm). Now, arguing as in \cite{BeLo76} Theorem 5.6.3 (replacing
  $2^k$ by $k$) and using \cite{BeLo76} Theorem 5.5.3 the claim
  follows.
\end{proof}
\begin{proof}[Proof of Theorem \ref{thm:quinti}]
  By Sobolev-type embeddings and Young's inequality, we see that for
  $r_1,q_1>1$, $s_1>\frac{1}{q_1}$, $b_1>\frac{1}{3}$ and an auxiliary
  exponent $p$ with $b_1+\frac{1}{2}>\frac{1}{p}>5(\frac{1}{2}-b_1)$
  \begin{align}\label{eq:q5}
    \|u_1 \overline{u}_2 u_3 \overline{u}_4 u_5\|_{X^{0,-b_1}_{r_1,2}}
    & \ls
    \|u_1 \overline{u}_2 u_3 \overline{u}_4 u_5\|_{X^{0,0}_{r_1,p}} & \nonumber\\
    & \leq \|u_1\|_{X^{0,0}_{r_1,5p}} \prod_{i=2}^5
    \|u_i\|_{X^{0,0}_{\infty,5p}}&\\
    & \ls
    \|u_1\|_{X^{0,b_1}_{r_1,2}}\prod_{i=2}^5\|u_i\|_{X^{s_1,b_1}_{q_1,2}}\nonumber
  \end{align}
  Now fix $\frac{4}{3}<q\le r\le2$ and $b>\frac{1}{6}+\frac{1}{3q}$.
  We will use complex multilinear interpolation (\cite{BeLo76},
  Theorem 4.4.1) between (\ref{eq:q4}) and (\ref{eq:q5}) with
  interpolation parameter $\theta = \frac{1}{2}$. To do so, we choose
  \begin{equation*}
    s_0=\frac{3}{2}-\frac{2}{q}-\eps,b_0=\frac{2}{3q}+\eps
  \end{equation*}
  in the endpoint \eqref{eq:q4} and
  \begin{equation*}
    \frac{1}{r_1}=\frac{2}{r}-\frac{1}{2}\; , \; \frac{1}{q_1}=\frac{2}{q}-\frac{1}{2} \; , \;
    s_1=\frac{2}{q}-\frac{1}{2}+\eps \; , \; b_1=\frac{1}{3}+\eps
  \end{equation*}
  in the endpoint \eqref{eq:q5}, where
  $\eps:=b-\frac{1}{6}-\frac{1}{3q}>0$ and use Lemma
  \ref{lem:interpolation}. As a result,
  \begin{equation*}
    \|u_1 \overline{u}_2u_3\overline{u}_4 u_5\|_{X^{0,-b}_{r,2}} \ls
    \|u_1\|_{X^{0,b}_{r,2}} \prod_{i=2}^5\|u_i\|_{X^{\frac{1}{2},b}_{q,2}}
  \end{equation*}
  where in the last line of (\ref{eq:q5}) as well as in the last
  expression we may exchange $u_1$ and $u_4$. Now our first claim
  \eqref{eq:quinti_gen} follows from $\langle \xi \rangle \le
  \sum_{1\le i \le 5}\langle \xi_i \rangle$. We use (\ref{eq:x_emb})
  of Lemma \ref{lem:cont_emb}, that is
  $X^{\frac{1}{2},-\frac{1}{2}+}_{r,2} \subset
  X^{\frac{1}{2},-1}_{r,\infty}$, and apply (\ref{eq:gain_T}) with
  $\nu = \frac{1}{3q'}-$ to all the six norms appearing, which gives a
  factor $T^{\frac{2}{q'}-}$.  Finally, \eqref{eq:quinti_a} follows
  from \eqref{eq:quinti} and the proof of Theorem \ref{thm:quinti} is
  complete.
\end{proof}

\section{The gauge transform}\label{sect:gauge}
\noindent
In this subsection we study a nonlinear transformation which turned
out to be a key ingredient to the well-posedness theory of the DNLS.
This type of transformation for the DNLS was already used by
Hayashi-Ozawa \cite{HO92,Hay93,HO94}, later by Takaoka \cite{Tak99},
in the nonperiodic case and then adapted to the periodic setting by
the second author in \cite{H06,H06diss}. Let us define for $u\in
C([-T,T],L^2(\T))$
\begin{equation}\label{eq:gauge0}
  \mathcal{G}_0u:=\exp\left(-i\mathcal{I}u\right)u
\end{equation}
where
\begin{equation}\label{eq:I}
  \mathcal{I}u(t,x):=\dashint_0^{2\pi} \int_\theta^x |u(t,y)|^2
  -\dashint_0^{2\pi}|u(t,z)|^2dz dy d\theta
\end{equation}
is the unique primitive of
\begin{equation*}
  x\mapsto |u(t,x)|^2
  -\dashint_0^{2\pi}|u(t,z)|^2 dz
\end{equation*}
with vanishing mean value.
Before we study the mapping properties of this transformation let us
recall the Sobolev multiplication law in our setting.
\begin{lemma}\label{lem:sob_mult}
  Let $1<r<\infty$, $0\leq s\leq s_1$ and $s_1>\frac{1}{r}$.  Then,
  \begin{equation}\label{eq:sob_mult1}
    \|u_1u_2\|_{\widehat{H}^{s}_r}\ls
    \|u_1\|_{\widehat{H}^{s_1}_{r}}\|u_2\|_{\widehat{H}^{s}_{r}}
  \end{equation}
  In particular, for $1<r<\infty$, $s>\frac{1}{r}$ we have
  \begin{equation}\label{eq:sob_mult2}
    \|u_1u_2\|_{\widehat{H}^{s}_{r}}\ls
    \|u_1\|_{\widehat{H}^{s}_{r}}\|u_2\|_{\widehat{H}^{s}_{r}}
  \end{equation}
\end{lemma}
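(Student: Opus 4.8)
The plan is to reduce the statement to a weighted convolution estimate on $\Z$ and then distribute the weight $\lb\xi\rb^s$ onto the two factors by a Leibniz-type splitting. Since $\widehat{u_1u_2}(\xi)=(2\pi)^{-\frac12}\sum_{\xi_1+\xi_2=\xi}\widehat{u}_1(\xi_1)\widehat{u}_2(\xi_2)$, the triangle inequality reduces \eqref{eq:sob_mult1} to
\begin{equation*}
  \left\|\lb\xi\rb^s\sum_{\xi_1+\xi_2=\xi}|\widehat{u}_1(\xi_1)|\,|\widehat{u}_2(\xi_2)|\right\|_{\ell^{r'}_\xi}
  \ls \|u_1\|_{\widehat{H}^{s_1}_{r}}\|u_2\|_{\widehat{H}^{s}_{r}}.
\end{equation*}
From $\xi=\xi_1+\xi_2$ and $s\ge0$ I would use the elementary bound $\lb\xi\rb^s\ls\lb\xi_1\rb^s+\lb\xi_2\rb^s$ to split the left-hand side into a contribution $B$ carrying the weight on $u_2$ and a contribution $A$ carrying it on $u_1$. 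Throughout I abbreviate $F=\lb\cdot\rb^{s_1}|\widehat{u}_1|$ and $G=\lb\cdot\rb^{s}|\widehat{u}_2|$, so that $\|F\|_{\ell^{r'}}=\|u_1\|_{\widehat{H}^{s_1}_{r}}$ and $\|G\|_{\ell^{r'}}=\|u_2\|_{\widehat{H}^{s}_{r}}$, and I regard both inner sums as convolutions on $\Z$ to be treated by Young's inequality.

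The term $B=|\widehat{u}_1|\ast G$ is straightforward: by Young's inequality $\ell^1\times\ell^{r'}\to\ell^{r'}$ and then Hölder in $\xi_1$,
\begin{equation*}
  \|B\|_{\ell^{r'}}\le\big\||\widehat{u}_1|\big\|_{\ell^1}\,\|G\|_{\ell^{r'}}
  \le\|\lb\cdot\rb^{-s_1}\|_{\ell^{r}}\,\|F\|_{\ell^{r'}}\,\|G\|_{\ell^{r'}},
\end{equation*}
and $\|\lb\cdot\rb^{-s_1}\|_{\ell^{r}}<\infty$ precisely because $s_1r>1$, i.e. $s_1>\frac1r$.

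The term $A=\big(\lb\cdot\rb^{s}|\widehat{u}_1|\big)\ast|\widehat{u}_2|$ is where the real work lies. Writing $\lb\cdot\rb^{s}|\widehat{u}_1|=\lb\cdot\rb^{s-s_1}F$ and $|\widehat{u}_2|=\lb\cdot\rb^{-s}G$, I would apply the general Young inequality $\ell^{a}\times\ell^{b}\to\ell^{r'}$ with $\frac1a+\frac1b=1+\frac1{r'}$, followed by Hölder, to obtain
\begin{equation*}
  \|A\|_{\ell^{r'}}\ls\|\lb\cdot\rb^{s-s_1}\|_{\ell^{\rho}}\,\|\lb\cdot\rb^{-s}\|_{\ell^{\sigma}}\,\|F\|_{\ell^{r'}}\|G\|_{\ell^{r'}},
\end{equation*}
where $\frac1a=\frac1\rho+\frac1{r'}$ and $\frac1b=\frac1\sigma+\frac1{r'}$, so that the Young constraint reads $\frac1\rho+\frac1\sigma=\frac1r$. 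The two weight norms are finite exactly when $\frac1\rho<s_1-s$ and $\frac1\sigma<s$. The main obstacle is that neither naive endpoint choice works across the whole range: $\rho=\infty$ forces $s>\frac1r$, while $\sigma=\infty$ forces $s_1-s>\frac1r$, and there is an intermediate band where both fail. The decisive observation is that any admissible pair obeys $\frac1\rho+\frac1\sigma<(s_1-s)+s=s_1$, with all values below $s_1$ attainable; since $\frac1r<s_1$, one can indeed split $\frac1r=\frac1\rho+\frac1\sigma$ with $0\le\frac1\rho<s_1-s$ and $0\le\frac1\sigma<s$ for every $0<s<s_1$. This is precisely where the hypothesis $s_1>\frac1r$ is consumed, and the degenerate cases $s=0$ and $s=s_1$ collapse to a single Young inequality with one $\ell^1$-factor (using $s_1>\frac1r$, respectively $s=s_1>\frac1r$).

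Combining the bounds for $A$ and $B$ yields \eqref{eq:sob_mult1}. Finally, \eqref{eq:sob_mult2} is the special case $s_1=s$: then $s=s_1>\frac1r$ and $0\le s\le s_1$ hold, so \eqref{eq:sob_mult1} applies directly.
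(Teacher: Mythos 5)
Your argument is correct, but it handles the crucial term by a genuinely different mechanism than the paper. The paper also reduces to the convolution form, but instead of the symmetric splitting $\lb\xi\rb^s\ls\lb\xi_1\rb^s+\lb\xi_2\rb^s$ it uses the asymmetric pointwise bound $\lb \xi \rb^s \ls \lb\xi-\xi_1\rb^{s}+\lb\xi_1\rb^{s_1}\lb\xi-\xi_1\rb^{s-s_1}$, valid because $s\le s_1$: when $|\xi_1|$ dominates, the excess decay is shifted onto the other frequency via $\lb\xi_1\rb^{s-s_1}\le\lb\xi-\xi_1\rb^{s-s_1}$. After this splitting \emph{both} resulting convolutions are estimated by the endpoint Young inequality $\ell^1\times\ell^{r'}\to\ell^{r'}$, the $\ell^1$ factor being absorbed by H\"older together with $\|\lb\cdot\rb^{-s_1}\|_{\ell^{r}}<\infty$; this is precisely your treatment of $B$, applied twice (with the roles of the two factors exchanged in the second term). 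Your term $A$, by contrast, keeps the weight $\lb\xi_1\rb^{s}$ on $u_1$ and compensates with non-endpoint Young plus the exponent splitting $\frac1r=\frac1\rho+\frac1\sigma$, $\frac1\rho<s_1-s$, $\frac1\sigma<s$, which is what forces your case analysis at $s=0$ and $s=s_1$ (where, as you note, the strict inequalities degenerate and one must fall back on an $\ell^\infty$ weight norm). The two mechanisms have the same strength here: the paper's pointwise weight transfer is shorter and uniform in $s$, with no degenerate cases, while your version is the more systematic default move and makes explicit that the hypothesis $s_1>\frac1r$ acts as a total summability budget, $\frac1\rho+\frac1\sigma=\frac1r<s_1=(s_1-s)+s$, distributed between the two weights.
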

\begin{proof}
  We have $ \lb \xi \rb^s \ls \lb\xi-\xi_1\rb^{s}
  +\lb\xi_1\rb^{s_1}\lb\xi-\xi_1\rb^{s-s_1} $ and by Young's
  inequality
  \begin{align*}
    \|u_1u_2\|_{\widehat{H}^{s}_r} \ls & \|\widehat{u}_1\|_{\ell^1}
    \|\lb\xi \rb^s \widehat{u}_2\|_{\ell^{r'}}+
    \|\lb\xi\rb^{s_1}\widehat{u}_1\|_{\ell^{r'}}
    \|\lb\xi \rb^{s-s_1} \widehat{u}_2\|_{\ell^{1}}\\
    \ls & \|u_1\|_{\widehat{H}^{s_1}_r} \|u_2\|_{\widehat{H}^{s}_r}
  \end{align*}
  because $\|\lb\xi\rb^{-s_1} \widehat{u}_i\|_{\ell^{1}}\ls
  \|\widehat{u}_i\|_{\ell^{r'}}$, $i=1,2$.
\end{proof}
\begin{lemma}\label{lem:lip}
  Let $1< r \leq 2$ and $s>\frac{1}{r}-\frac{1}{2}$ or $r=2$ and
  $s\geq 0$. Then,
$$
\mathcal{G}_0: C([-T,T],\widehat{H}^{s}_r(\T)) \to
C([-T,T],\widehat{H}^{s}_r(\T))
$$
is a locally bi-lipschitz homeomorphism with inverse $
\mathcal{G}^{-1}_0 u=\exp\left(i\mathcal{I}u\right)u $.
\end{lemma}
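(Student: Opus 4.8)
The plan is to get the inverse formula essentially for free and then reduce everything to a single multiplier estimate. Since $|u(t,x)|^2$ is real, $\mathcal{I}u$ is real-valued, so $|\mathcal{G}_0u|=|u|$ pointwise; as $\mathcal{I}$ depends on $u$ only through $|u|^2$, this gives $\mathcal{I}(\mathcal{G}_0u)=\mathcal{I}u$ and hence $\exp(i\mathcal{I}(\mathcal{G}_0u))\mathcal{G}_0u=\exp(i\mathcal{I}u)\exp(-i\mathcal{I}u)u=u$, together with the symmetric identity. Thus $\mathcal{G}_0^{-1}u=\exp(i\mathcal{I}u)u$ is a genuine two-sided inverse, and both maps have the form $u\mapsto e^{\mp i\mathcal{I}u}u$; by symmetry it suffices to treat $\mathcal{G}_0$. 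For the Lipschitz property I would use the splitting
\begin{equation*}
\mathcal{G}_0u-\mathcal{G}_0v=e^{-i\mathcal{I}u}(u-v)+\bigl(e^{-i\mathcal{I}u}-e^{-i\mathcal{I}v}\bigr)v,
\end{equation*}
so that at each fixed $t$ everything reduces to the two bounds $\|e^{-i\mathcal{I}u}f\|_{\widehat{H}^{s}_{r}}\ls_R\|f\|_{\widehat{H}^{s}_{r}}$ and $\|(e^{-i\mathcal{I}u}-e^{-i\mathcal{I}v})f\|_{\widehat{H}^{s}_{r}}\ls_R\|u-v\|_{\widehat{H}^{s}_{r}}\|f\|_{\widehat{H}^{s}_{r}}$, with constants depending on the radius $R$ of a ball containing $u,v$.

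The engine behind both bounds is the regularity of $\mathcal{I}u$. Since $\partial_x\mathcal{I}u=|u|^2-\dashint_0^{2\pi}|u|^2$ has vanishing mean, we have $\widehat{\mathcal{I}u}(\xi)=\widehat{|u|^2}(\xi)/(i\xi)$ for $\xi\neq0$ and $\widehat{\mathcal{I}u}(0)=0$, so that $\|\mathcal{I}u\|_{\widehat{H}^{\sigma+1}_{r}}\ls\||u|^2\|_{\widehat{H}^{\sigma}_{r}}$. I would then establish the bilinear estimate
\begin{equation*}
\||u|^2\|_{\widehat{H}^{\sigma}_{r}}\ls\|u\|_{\widehat{H}^{s}_{r}}^2\qquad\text{whenever }\sigma<2s-\tfrac1r.
\end{equation*}
The role of the hypothesis $s>\frac1r-\frac12$ is precisely to make the interval $(\frac1r-1,\,2s-\frac1r)$ nonempty: choosing $\sigma$ in it places $\mathcal{I}u\in\widehat{H}^{s_1}_{r}$ with $s_1=\sigma+1>\frac1r$ and $s_1\ge s$, i.e.\ into the multiplier algebra.

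With $\mathcal{I}u\in\widehat{H}^{s_1}_{r}$, $s_1>\frac1r$, the first bound follows by expanding $e^{-i\mathcal{I}u}=\sum_{k\ge0}\frac{(-i\mathcal{I}u)^k}{k!}$ and invoking Lemma \ref{lem:sob_mult}: since $\widehat{H}^{s_1}_{r}$ is an algebra by \eqref{eq:sob_mult2} and $\|(\mathcal{I}u)^kf\|_{\widehat{H}^{s}_{r}}\ls C^k\|\mathcal{I}u\|_{\widehat{H}^{s_1}_{r}}^k\|f\|_{\widehat{H}^{s}_{r}}$ by \eqref{eq:sob_mult1}, the series sums to a bound of the form $e^{C\|\mathcal{I}u\|_{\widehat{H}^{s_1}_{r}}}\|f\|_{\widehat{H}^{s}_{r}}$, which is $\ls_R$ on a ball because $\|\mathcal{I}u\|_{\widehat{H}^{s_1}_{r}}\ls\|u\|_{\widehat{H}^{s}_{r}}^2$. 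For the difference bound I would use $|e^{-ia}-e^{-ib}|\le|a-b|$ together with a telescoping argument reducing $e^{-i\mathcal{I}u}-e^{-i\mathcal{I}v}$ to $\mathcal{I}u-\mathcal{I}v$ measured in $\widehat{H}^{s_1}_{r}$; since $\mathcal{I}u-\mathcal{I}v$ is, via $|u|^2-|v|^2=(u-v)\overline{u}+v\,\overline{(u-v)}$, bilinear in $(u-v)$ and $(u,v)$, the same bilinear estimate controls it by $\|u-v\|_{\widehat{H}^{s}_{r}}(\|u\|_{\widehat{H}^{s}_{r}}+\|v\|_{\widehat{H}^{s}_{r}})$. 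Continuity of $t\mapsto\mathcal{G}_0u(t)$ in $\widehat{H}^{s}_{r}$ then follows from these fixed-time estimates applied to $u(t)-u(t')$ and continuity of $t\mapsto u(t)$, and assembling the pieces yields the locally bi-Lipschitz homeomorphism with the stated inverse.

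The hard part will be the bilinear estimate of the second paragraph in the endpoint regime where $s$ is close to $\frac1r-\frac12$. Because $r\le2$ forces $r'\ge2$, Young's inequality for the $\ell^{r'}_{\xi}$ convolution defining $\widehat{|u|^2}$ is adverse, and one cannot afford to discard the negative output weight $\langle\xi\rangle^{\sigma}$ gained from the primitive. I expect the genuine work to lie in the high-high interaction ($\langle\xi_1\rangle\sim\langle\xi_2\rangle\gg\langle\xi\rangle$) and the high-low interaction ($\langle\xi_1\rangle\sim\langle\xi\rangle\gg\langle\xi_2\rangle$): in the former one balances $\langle\xi\rangle^{\sigma}$ against $\langle\xi_1\rangle^{-s}\langle\xi_2\rangle^{-s}$ by a careful three-exponent H\"older split, while in the latter one must exploit that the low-frequency summation effectively runs only over $|\xi_2|\ls|\xi|$. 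Both computations sharpen to the single condition $\sigma<2s-\frac1r$, and it is exactly at this point that the assumption $s>\frac1r-\frac12$ is what the argument needs to close.
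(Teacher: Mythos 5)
Your proof has the same skeleton as the paper's: the paper likewise reduces the lemma to a single product estimate (its \eqref{eq:exp}), proves it by expanding the exponential and iterating the multiplication law of Lemma \ref{lem:sob_mult} with $s_1=\max\{\tfrac1r+,s\}$, and then controls $\|\mathcal{I}f-\mathcal{I}g\|_{\widehat{H}^{s_1}_r}$ through $\||f|^2-|g|^2\|_{\widehat{H}^{s_1-1}_r}$; your observation that $|\mathcal{G}_0u|=|u|$ forces $\mathcal{I}(\mathcal{G}_0u)=\mathcal{I}u$, so that the stated inverse really is a two-sided inverse, is a nice point the paper leaves implicit. However, there is a genuine gap exactly where the lemma lives: the bilinear estimate $\||u|^2\|_{\widehat{H}^{\sigma}_r}\ls\|u\|^2_{\widehat{H}^{s}_r}$ is never proved --- you defer it as ``the hard part'' with only a sketch --- and, as stated, it is false. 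For $s>\frac1r$ your claimed range $\sigma<2s-\frac1r$ contains exponents $\sigma>s$, where the high--low interaction destroys the estimate: with $u_N=1+N^{-s}e^{iNx}$ one has $\|u_N\|_{\widehat{H}^{s}_r}\sim 1$ but $\||u_N|^2\|_{\widehat{H}^{\sigma}_r}\gs N^{\sigma-s}\to\infty$. The correct constraint is $\sigma\le\min\{s,\,2s-\tfrac1r\}$ (up to endpoints); your scheme survives this correction, since the interval $(\max\{\tfrac1r-1,s-1\},\min\{s,2s-\tfrac1r\})$ is nonempty precisely when $s>\frac1r-\frac12$, but the estimate still needs a proof. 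Moreover, no delicate interaction analysis is needed at the $\sigma$ you actually use: for $s\le\frac1r$ the paper takes $\sigma=\frac1r-1+$, writes $\widehat{|u|^2}$ as the convolution $\widehat{u}\ast\widehat{\overline{u}}$, applies Young's inequality in $\ell^{2-}$ (together with $\|\widehat{u}\|_{\ell^{2-}}\ls\|u\|_{\widehat{H}^{s}_r}$ by H\"older, which is exactly where $s>\frac1r-\frac12$ enters), and absorbs the weight $\langle\xi\rangle^{\sigma}$ by one more H\"older; for $s>\frac1r$ one simply takes $\sigma=s$ and quotes the algebra property \eqref{eq:sob_mult2}.

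A second gap: the lemma also asserts the case $r=2$, $s\ge0$, in particular $s=0$, which your method cannot reach: there your interval $(\tfrac1r-1,2s-\tfrac1r)=(-\tfrac12,-\tfrac12)$ is empty, and indeed no bound $\||u|^2\|_{\widehat{H}^{\sigma}_2}\ls\|u\|^2_{L^2}$ with $\sigma>-\frac12$ can hold (your own high--high example at $s=0$ shows this). That endpoint requires an $L^2$-specific argument --- $|e^{-i\mathcal{I}u}|=1$ makes multiplication a Plancherel isometry, and $\|\mathcal{I}u-\mathcal{I}v\|_{L^\infty}\ls\||u|^2-|v|^2\|_{L^1}$ --- which the paper imports from \cite{H06}; neither tool has an $r<2$ analogue. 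Finally, a minor point: the pointwise bound $|e^{-ia}-e^{-ib}|\le|a-b|$ is of no direct use in $\widehat{H}^{s}_r$ for $r<2$, since these norms are not monotone under pointwise domination of functions; the telescoping must be done inside the power series, via $a^n-b^n=(a-b)\sum_{k=0}^{n-1}a^kb^{n-1-k}$ and the multiplication law, which is how the paper proceeds and presumably what your ``telescoping argument'' should mean.
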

\begin{proof}
  We will transfer the ideas from the proof of \cite{H06}, Lemma 2.3
  (where the claim is shown in the $L^2$ setting) to the case $1< r <
  2$ and $s>\frac{1}{r}-\frac{1}{2}$.  Obviously, it suffices to prove
  \begin{equation}\label{eq:exp}
    \begin{split}
      &\left\| \left(\exp(\pm i \mathcal{I}f)-\exp(\pm i
          \mathcal{I}g)\right)h
      \right\|_{\widehat{H}^{s}_r}\\
      \ls &
      \exp(c\|f\|^2_{\widehat{H}^s_r}+c\|g\|^2_{\widehat{H}^s_r})
      \|f-g\|_{\widehat{H}^s_r}\|h\|_{\widehat{H}^s_r}
    \end{split}
  \end{equation}
  for smooth $f,g,h \in \widehat{H}^s_r$.  By the series expansion of
  the exponential and \eqref{eq:sob_mult1} we infer that the left hand
  side of \eqref{eq:exp} is bounded by
  \begin{equation*}
    \|h\|_{\widehat{H}^s_r}\|\mathcal{I}f-\mathcal{I}g\|_{\widehat{H}^{s_1}_r}
    \sum_{n=1}^\infty \frac{c^{n}}{n!} \sum_{k=0}^{n-1}
    \|\mathcal{I}f\|^{k}_{\widehat{H}^{s_1}_r}\|\mathcal{I}g\|^{n-1-k}_{\widehat{H}^{s_1}_r}
  \end{equation*}
  for $s_1=\max\{\frac{1}{r}+,s\}$. By the definition of $\mathcal{I}$
  it follows
  \begin{equation*}
    \|\mathcal{I}f-\mathcal{I}g\|_{\widehat{H}^{s_1}_r}\ls
    \||f|^2-|g|^2\|_{\widehat{H}^{s_1-1}_r}
  \end{equation*}
  In the case $s\leq \frac{1}{r}$ we have
  \begin{equation*}
    \||f|^2-|g|^2\|_{\widehat{H}^{s_1-1}_r}\ls 
    (\|\widehat{f}\|_{\ell^{2-}}+\|\widehat{g}\|_{\ell^{2-}})\|\widehat{f}-\widehat{g}\|_{\ell^{2-}}
  \end{equation*}
  Because $s>\frac{1}{r}-\frac{1}{2}$ we find
  \begin{equation*}
    \|\mathcal{I}f-\mathcal{I}g\|_{\widehat{H}^{s_1}_r}
    \ls \left(\|f\|_{\widehat{H}^s_r}+\|g\|_{\widehat{H}^s_r}\right)
    \|f-g\|_{\widehat{H}^s_r}
  \end{equation*}
  In the case $s>\frac{1}{r}$ we use \eqref{eq:sob_mult2} to deduce
  \begin{equation*}\||f|^2-|g|^2\|_{\widehat{H}^{s_1-1}_r}\ls
    \|f-g\|_{\widehat{H}^{s}_r}
    (\|f\|_{\widehat{H}^{s}_r}+\|g\|_{\widehat{H}^{s}_r})
  \end{equation*}
  and the estimate \eqref{eq:exp} follows.
\end{proof}
The following lemma is contained in \cite{H06diss} in the $r=2$ case.
\begin{lemma}\label{lemma:trans}
  Let $1< r \leq 2$ and $s>\frac{1}{r}-\frac{1}{2}$ or $r=2$ and
  $s\geq 0$.  The translation operators
  \begin{equation*}
    \begin{split}
      \tau^\mp&: C([-T,T],\widehat{H}^s_r(\T)) \to C([-T,T],\widehat{H}^s_r(\T))\\
      \tau^{\mp}u(t,x)&:=u(t,x\mp 2t\dashint_0^{2\pi}|u(t,y)|^2dy)
    \end{split}
  \end{equation*}
  are continuous. However, their restrictions to arbitrarily small
  balls are not uniformly continuous.
\end{lemma}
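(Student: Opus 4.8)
The plan is to work on the Fourier side, where $\tau^\mp$ becomes multiplication by a unimodular phase, and to treat the two assertions separately. Writing $P(u)(t):=\dashint_0^{2\pi}|u(t,y)|^2\,dy=\frac{1}{2\pi}\|u(t)\|_{L^2}^2$ and $a_u(t):=\pm 2tP(u)(t)$, one has
$$\widehat{\tau^\mp u}(t,\xi)=e^{-ia_u(t)\xi}\,\widehat{u}(t,\xi),$$
so that $|\widehat{\tau^\mp u}(t,\xi)|=|\widehat{u}(t,\xi)|$ and hence $\|\tau^\mp u(t)\|_{\widehat{H}^s_r}=\|u(t)\|_{\widehat{H}^s_r}$ for each fixed $t$: \emph{the operator is a spatial isometry at every time}. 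The hypotheses on $(s,r)$ enter exactly through the continuous embedding $\widehat{H}^s_r(\T)\hookrightarrow L^2(\T)$: for $r=2$ this is Parseval together with $\lb\xi\rb^s\ge 1$, and for $r<2$ it follows from H\"older's inequality in $\xi$, the relevant weight being summable \emph{precisely} because $s>\frac1r-\frac12$. Consequently $P$ maps bounded subsets of $C([-T,T],\widehat{H}^s_r)$ Lipschitz--continuously into $C([-T,T],\R)$, and $\sup_t|a_{u_n}(t)-a_u(t)|\le 2T\sup_t|P(u_n)(t)-P(u)(t)|\to 0$ whenever $u_n\to u$.

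For continuity I would fix $u$, let $u_n\to u$, and split
$$\widehat{\tau^\mp u_n}-\widehat{\tau^\mp u}=e^{-ia_{u_n}\xi}\big(\widehat{u_n}-\widehat{u}\big)+\big(e^{-ia_{u_n}\xi}-e^{-ia_u\xi}\big)\widehat{u}.$$
The first summand contributes exactly $\|u_n(t)-u(t)\|_{\widehat{H}^s_r}$, which tends to $0$ uniformly in $t$ by the isometry property. The second summand is the crux. Since $|e^{-ia_{u_n}\xi}-e^{-ia_u\xi}|\le\min\{2,\,|a_{u_n}(t)-a_u(t)|\,|\xi|\}$, a direct estimate would lose a derivative through the factor $|\xi|$. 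Instead I would use that $K:=\{u(t):t\in[-T,T]\}$ is compact in $\widehat{H}^s_r$ and that $r'<\infty$, so that $K$ has uniformly small high--frequency tails: given $\eps>0$ there is $M$ with $\sup_t\|\lb\xi\rb^s\widehat{u}(t)\|_{\ell^{r'}_{|\xi|>M}}<\eps$. On $|\xi|\le M$ the phase difference is bounded by $M\sup_t|a_{u_n}-a_u|\to0$, while on $|\xi|>M$ one uses the bound $2$ together with the tail estimate; this yields $\sup_t\|\tau^\mp u_n(t)-\tau^\mp u(t)\|_{\widehat{H}^s_r}\to0$. The same low/high--frequency splitting applied to two times $t,t'$ (in place of the indices $n$) shows that $\tau^\mp u\in C([-T,T],\widehat{H}^s_r)$, so $\tau^\mp$ is well defined and continuous.

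To disprove uniform continuity on the ball $B_\rho$ for an arbitrary $\rho>0$, I would use \emph{time--independent} data combining a low mode that carries the $L^2$--mass with a single high mode that detects the induced shift. Fix the horizon $T$, set $A=\tfrac\rho2$ and $\eps_N=\tfrac\rho2\lb N\rb^{-s}$, and let $u_N$, $v_N$ have Fourier coefficients
$$\widehat{u_N}(0)=A,\ \widehat{u_N}(N)=\eps_N,\qquad \widehat{v_N}(0)=B_N,\ \widehat{v_N}(N)=\eps_N,$$
all others zero, where $0<B_N<A$ is chosen so that $2NT\big(P(u_N)-P(v_N)\big)=\pi$. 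Then $\|u_N\|_{\widehat{H}^s_r}=\|v_N\|_{\widehat{H}^s_r}\le\tfrac\rho2\,2^{1/r'}<\rho$, so $u_N,v_N\in B_\rho$, while $P(u_N)-P(v_N)=\tfrac1{2\pi}(A^2-B_N^2)$ forces $A-B_N=O(N^{-1})$ and hence $\|u_N-v_N\|_{\widehat{H}^s_r}=|A-B_N|\to0$. Only the high modes are translated, and at $t=T$ they acquire a relative phase $\pi$, whence
$$\|\tau^\mp u_N(T)-\tau^\mp v_N(T)\|_{\widehat{H}^s_r}\ge \lb N\rb^s\eps_N\,\big|e^{\mp i\pi}-1\big|=2\lb N\rb^s\eps_N=\rho.$$
Thus $\|u_N-v_N\|_{C([-T,T],\widehat{H}^s_r)}\to0$ while the images stay $\rho$ apart, contradicting uniform continuity on $B_\rho$.

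I expect the main obstacle to be the second summand in the continuity estimate, where the phase difference is multiplied by the unbounded frequency $|\xi|$; the compactness/uniform--tail argument resolves it, and it crucially needs $r'<\infty$ (that is $r>1$) as well as the embedding into $L^2$, without which the shift $a_u$ would not even be continuous. On the counterexample side the only point requiring care is to exhibit a family valid for \emph{all} admissible $s$: the low mode produces an $N$--independent shift of size $\sim\rho^2$, and the high mode at frequency $N$ amplifies it into an order--one phase, so the construction is insensitive to the size of $s$ and works uniformly for $r=2$ and $r<2$ alike.
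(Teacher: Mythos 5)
Your proposal is correct and follows essentially the same route as the paper's proof (given there as a sketch with reference to \cite{H06diss}): continuity is obtained from the embedding $\widehat{H}^s_r(\T)\subset L^2(\T)$ (valid exactly for $s>\frac1r-\frac12$, resp. $s\ge0$ when $r=2$) controlling the shift, the isometry of translations on the Fourier side, and a dominated-convergence/tail argument for the phase factor --- your uniform-tail-of-the-compact-orbit step is just a direct way of achieving the uniformity in $t$ that the paper gets by piecewise-constant approximation in time. The counterexample to uniform continuity is the same construction as the paper's $u_{n,j}(x)=\rho n^{-s}e^{inx}+c_{n,j}$: a normalized high mode plus constants whose $L^2$-masses differ by $O(1/n)$, which the frequency $n$ amplifies into an order-one relative phase.
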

\begin{proof}
  We only sketch the main ideas and refer to \cite{H06diss},
  Propositions 3.2.1 and 3.2.2 for details in the $H^s$ case which
  easily carry over to the $\widehat{H}^{s}_r$ setting: Because the
  embedding $\widehat{H}^{s}_r\subset L^2$ is continuous, the
  continuity statement follows from the continuity of the map
  \begin{equation*}
    \begin{split}
      \tau: \R\times C([-T,T],\widehat{H}^s_r(\T)) &\to C([-T,T],\widehat{H}^s_r(\T))\\
      \tau(h,u)(t,x)&:=u(t,x+ht)
    \end{split}
  \end{equation*}
  If we fix the time variable, the map
  \begin{equation*}
    \begin{split}
      \R\times \widehat{H}^s_r(\T) \to \widehat{H}^s_r(\T) \, , \quad
      (h,f)\mapsto f(\cdot+h)
    \end{split}
  \end{equation*}
  is continuous. This follows from the fact that a translation by a
  fixed amount is an isometry in $\widehat{H}^s_r(\T)$ combined with
  $e^{i\xi h}\to e^{i\xi h_0}$ for $h\to h_0$ and the dominated
  convergence theorem.  Now, because $[-T,T]$ is compact, we may
  approximate $u \in C([-T,T],\widehat{H}^s_r(\T))$ uniformly by a
  piecewise constant (in time) function and apply the result for $t$
  fixed.
  
  For $r>0$, the sequences of functions
  \begin{equation*}
    u_{n,j}(t,x)=u_{n,j}(x)=rn^{-s}e^{inx}+c_{n,j}\; , \quad n \in \N, \, j=1,2
  \end{equation*}
  with $c_{n,1}=\frac{1}{\sqrt{n}}$ and $c_{n,2}=0$ provide a
  counterexample to the uniform continuity on balls.
\end{proof}
As in \cite{H06,H06diss} we define $\mathcal{G}=\mathcal{G}_0\circ
\tau^{-}$, i.e.
\begin{equation}\label{eq:gauge}
  \mathcal{G}u(t,x)=(\mathcal{G}_0u)\left(t,x-2t\dashint_0^{2\pi}|u(t,y)|^2dy \right)
\end{equation}
\begin{lemma}\label{lemma:gauge_prop}
  Let $u,v\in C([-T,T],H^2)\cap C^1((-T,T),L^2)$ such that
  $v=\mathcal{G}u$. Then, $u$ solves \eqref{eq:dnls} if and only if
  $v$ solves
  \begin{equation}\label{eq:gauge_dnls}
    \begin{split}
      i \partial_t v(t) + \partial_x^2
      v(t)&=-i\mathcal{T}(v)(t)-\tfrac{1}{2} \mathcal{Q}(v)(t)\; ,
      \quad  t \in (-T,T)\\
      v(0)&=\mathcal{G}u(0)
    \end{split}
  \end{equation}
  where
  \begin{equation}\label{eq:mod_tri1}
    \mathcal{T}(v)=
    v^2\partial_x \bar{v}-2i\dashint_0^{2\pi} \Imag v\partial_x  \bar{v} \, dx \,v
  \end{equation}
  and
  \begin{equation}\label{eq:mod_quinti1}
    \mathcal{Q}(v)=\left(|v|^4-\dashint_0^{2\pi}|v|^4 dx \right)v
    -2\dashint_0^{2\pi}|v|^2dx
    \left(|v|^2-\dashint_0^{2\pi}|v|^2dx\right)v
  \end{equation}
  i.e. $\mathcal{T}(v)=T(v,v,v)$ and $\mathcal{Q}(v)=Q(v,v,v,v,v)$ for
  $T$ and $Q$ defined in \eqref{eq:mod_tri} and \eqref{eq:mod_quinti},
  respectively.  Moreover, the map
  \begin{equation*}
    \mathcal{G}: C([-T,T],\widehat{H}^s_r(\T)) \to
    C([-T,T],\widehat{H}^s_r(\T))
  \end{equation*}
  is a homeomorphism with inverse $ \mathcal{G}^{-1}=\tau^+\circ
  \mathcal{G}_0^{-1} $. The restrictions of $\mathcal{G}$ and
  $\mathcal{G}^{-1}$ to arbitrarily small balls fail to be uniformly
  continuous. However, $\mathcal{G}$ is locally bi-lipschitz on
  subsets of functions with prescribed $L^2$ norm.
\end{lemma}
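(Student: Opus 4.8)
The plan is to separate the algebraic identity \eqref{eq:gauge_dnls} from the mapping properties of $\mathcal{G}$, deriving the former by a direct computation and the latter as soft consequences of Lemmas \ref{lem:lip} and \ref{lemma:trans}. For the equation I would follow the factorization $\mathcal{G}=\mathcal{G}_0\circ\tau^-$ in two steps. First, set $w=\mathcal{G}_0u=e^{-i\mathcal{I}u}u$ and differentiate, using $\partial_x\mathcal{I}u=|u|^2-\dashint_0^{2\pi}|u|^2\,dx$ together with the expression for $\partial_t\mathcal{I}u$ obtained from \eqref{eq:I} and the evolution of $|u|^2$ under \eqref{eq:dnls}. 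Substituting that $u$ solves \eqref{eq:dnls}, the gauge is built precisely so that the worst contribution $i\partial_x(|u|^2u)$ and the mixed term produced by $\partial_x^2$ combine to remove the full derivative loss, leaving a cubic term of the form $w^2\partial_x\bar w$, a mean-value correction, and a quintic remainder. Second, the Galilean shift $\tau^-$ is applied to cancel the residual term proportional to $\dashint_0^{2\pi}|u|^2\,dx\,\partial_x w$; the shift is consistent because $\dashint_0^{2\pi}|w|^2\,dx=\dashint_0^{2\pi}|u|^2\,dx$ is unchanged by the gauge and invariant under the translation. Comparing the result with \eqref{eq:mod_tri1} and \eqref{eq:mod_quinti1}, and identifying the frequency-restricted sums with $T(v,v,v)$ and $Q(v,v,v,v,v)$ from \eqref{eq:mod_tri} and \eqref{eq:mod_quinti}, finishes this step. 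As the identity is purely algebraic and the $r=2$ version is carried out in \cite{H06diss}, I would reference that computation, noting that it is insensitive to the data space.

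For the mapping properties I would argue by composition. Lemma \ref{lemma:trans} gives that $\tau^\mp$ are continuous self-maps of $C([-T,T],\widehat{H}^s_r(\T))$, and Lemma \ref{lem:lip} gives that $\mathcal{G}_0$ is a locally bi-Lipschitz homeomorphism with inverse $e^{i\mathcal{I}u}u$; hence $\mathcal{G}=\mathcal{G}_0\circ\tau^-$ is continuous. To obtain the inverse I would exploit that both factors preserve the pointwise modulus, hence the mean $\dashint_0^{2\pi}|u|^2\,dx$: from $|\mathcal{G}_0u|=|u|$ one gets $\mathcal{I}(\mathcal{G}_0u)=\mathcal{I}u$, so $\mathcal{G}_0^{-1}$ undoes $\mathcal{G}_0$, while translation is an $L^2$-isometry so $\tau^+$ undoes $\tau^-$. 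A direct check then gives $\tau^+\circ\mathcal{G}_0^{-1}\circ\mathcal{G}_0\circ\tau^-=\Id$ and the reverse composition, so that $\mathcal{G}^{-1}=\tau^+\circ\mathcal{G}_0^{-1}$.

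For the two continuity dichotomies I would again reduce everything to the translation. The non-uniform continuity on balls propagates through $\mathcal{G}_0$: since $\mathcal{G}_0$ is bi-Lipschitz on bounded sets, the sequences $u_{n,j}$ from the proof of Lemma \ref{lemma:trans}, which satisfy $\|u_{n,1}-u_{n,2}\|\to0$ while $\|\tau^-u_{n,1}-\tau^-u_{n,2}\|\gtrsim1$, yield $\|\mathcal{G}u_{n,1}-\mathcal{G}u_{n,2}\|\gtrsim\|\tau^-u_{n,1}-\tau^-u_{n,2}\|\gtrsim1$; note that these sequences have distinct $L^2$ norms, which is exactly what produces the order-one phase discrepancy. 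On a subset of fixed $L^2$ norm, by contrast, the mean $\dashint_0^{2\pi}|u|^2\,dx$ is constant, so the shift $2t\dashint_0^{2\pi}|u|^2\,dx$ no longer depends on $u$ and $\tau^-$ restricts to a \emph{fixed} translation, which is an isometry; composing this with the bi-Lipschitz bound \eqref{eq:exp} of Lemma \ref{lem:lip} (valid on bounded subsets of $\widehat{H}^s_r$, with the exponential constant controlled there) shows that $\mathcal{G}$ is locally bi-Lipschitz on such subsets.

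The main obstacle is the first step, namely the precise derivation of \eqref{eq:gauge_dnls} and, within it, the bookkeeping of the mean-value correction terms subtracted in \eqref{eq:mod_tri1} and \eqref{eq:mod_quinti1}. These corrections produce exactly the Fourier restrictions $\xi_1\neq\xi$, $\xi_2\neq\xi$ that carry the crucial cancellations exploited in Theorems \ref{thm:tri_x1} and \ref{thm:tri_x2}, so their coefficients must be pinned down exactly rather than merely up to lower-order contributions; by comparison, the homeomorphism, the failure of uniform continuity, and the bi-Lipschitz property on fixed-$L^2$ sets are all straightforward consequences of Lemmas \ref{lem:lip} and \ref{lemma:trans} combined with the $L^2$-invariance of $\mathcal{G}_0$ and $\tau^\mp$.
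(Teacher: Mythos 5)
Your proposal is correct and follows essentially the same route as the paper: the equivalence of \eqref{eq:dnls} and \eqref{eq:gauge_dnls} is reduced to the known periodic gauge computation in \cite{H06,H06diss} together with the Fourier-side identification of $\mathcal{T}(v)=T(v,v,v)$ and $\mathcal{Q}(v)=Q(v,v,v,v,v)$, while the mapping properties are obtained from the factorization $\mathcal{G}=\mathcal{G}_0\circ\tau^-$ via Lemmas \ref{lem:lip} and \ref{lemma:trans}. In fact you spell out more carefully than the paper's one-line conclusion (``the mapping properties follow from Lemma \ref{lem:lip}'') how the counterexample sequences from Lemma \ref{lemma:trans} propagate through the locally bi-Lipschitz map $\mathcal{G}_0$, and why $\tau^-$ degenerates to a fixed (isometric) translation on sets of prescribed $L^2$ norm, which is exactly the intended argument.
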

\begin{proof}
  To see the equivalence of \eqref{eq:dnls} and \eqref{eq:gauge_dnls}
  the calculations for the periodic case may be found in \cite{H06},
  Section 2.  The fact that we may represent $\mathcal{T}$ and
  $\mathcal{Q}$ via convolution operators on the Fourier side where
  certain frequency interactions are cancelled out was remarked in
  \cite{H06diss}, Remark 3.2.7. The verification of the precise
  formulas are straightforward, using (suppressing the $t$ dependence)
  \begin{align*}
    (2\pi)^{-1}\widehat{v}\ast\widehat{\partial_x \overline{v}}(0)&=i
    \dashint_0^{2\pi}
    \Imag v \partial_x \overline{v} \, dx\\
    (2\pi)^{-1} \widehat{v}\ast\widehat{\overline{v}}(0)&=\dashint_0^{2\pi}|v|^2 \, dx\\
    (2\pi)^{-2}\widehat{v}\ast\widehat{\overline{v}}\ast\widehat{v}\ast\widehat{\overline{v}}(0)
    &=\dashint_0^{2\pi}|v|^4 \, dx
  \end{align*}
  The mapping properties
  follow from Lemma \ref{lem:lip}.
\end{proof}
\begin{remark}\label{rem:cancellation}
  The cancellation of certain frequency interactions due to the term
  $$
  2\dashint_0^{2\pi} \Imag v \partial_x \overline{v} \, dx v \, ,
  $$
  which is crucial for our arguments, cp. \eqref{eq:mod_tri}, is an important feature of the gauge
  transformation. We observe as well that this expression itself is not well-defined in
  $\widehat{H}^{\frac{1}{2}}_r(\T)$ for $1<r<2$.
\end{remark}
\section{Proof of well-posedness}\label{sect:proof_wp}
\noindent
Now, we show that the Cauchy problem \eqref{eq:gauge_dnls} is
well-posed.  Let $\chi \in C_0^\infty(\R)$ be nonnegative and
symmetric such that $\chi(t)=0$ for $|t|\geq 2$ and $\chi(t)=1$ for
$|t|\leq 1$.  Recall that $Z^s_r:=X^{s,\frac{1}{2}}_{r,2} \cap
X^{s,0}_{r,\infty}$.  We have, similar to the $L^2$ case, the
following linear estimates:
\begin{lemma}\label{lemma:lin_est}
  Let $s\in \R$, $1<r<\infty$.
  \begin{align}
    \left\|\chi Su_0\right\|_{Z^s_r}&\ls \|u_0\|_{\widehat{H}^{s}_r} \label{eq:lin_hom}\\
    \left\|\chi \int_0^t S(t-t')u(t') dt'\right\|_{Z^s_r}&\ls
    \|u\|_{X^{s,-\frac{1}{2}}_{r,2}}
    +\|u\|_{X^{s,-1}_{r,\infty}}\label{eq:lin_inhom}
  \end{align}
\end{lemma}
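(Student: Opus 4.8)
The plan is to prove the two estimates separately, exploiting that the group $S(t)=e^{it\partial_x^2}$ and the cut-off $\chi$ act on the $X^{s,b}_{r,p}$-norms only through the time/modulation variable, so that the $\ell^{r'}_\xi$-structure with weight $\lb\xi\rb^{s}$ stays inert and everything reduces to scalar-in-$\tau$ statements, uniform in $\xi$. For the homogeneous estimate \eqref{eq:lin_hom} I would simply compute the full space-time Fourier transform: from $\widehat{Su_0}(t,\xi)=e^{-it\xi^2}\widehat{u_0}(\xi)$ one gets $\mathcal{F}(\chi Su_0)(\tau,\xi)=\widehat{u_0}(\xi)\,\widehat{\chi}(\tau+\xi^2)$, which factorizes, so that after the shift $\tau\mapsto\tau-\xi^2$
\[
  \|\chi Su_0\|_{X^{s,b}_{r,p}}
  =\|\lb\xi\rb^{s}\widehat{u_0}\|_{\ell^{r'}_\xi}\,
   \big\|\lb\tau\rb^{b}\widehat{\chi}\big\|_{L^{p'}_\tau}.
\]
Since $\chi\in C_0^\infty(\R)$ the function $\widehat{\chi}$ is Schwartz, so the second factor is finite for $(b,p)=(\tfrac12,2)$ and for $(b,p)=(0,\infty)$; taking the larger of the two components of $Z^s_r$ yields \eqref{eq:lin_hom}.

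For \eqref{eq:lin_inhom} I would first factor out the group. Writing $w(t)=\chi(t)\int_0^t S(t-t')u(t')\,dt'=\chi(t)S(t)\int_0^t F(t')\,dt'$ with $F=e^{-it\partial_x^2}u$, we have $e^{-it\partial_x^2}w(t)=\chi(t)\int_0^t F(t')\,dt'$. Applying the isometry $\mathcal{F}\circ e^{-it\partial_x^2}\colon X^{s,b}_{r,p}\to\ell^{r'}_\xi(\lb\xi\rb^{s};L^{p'}_\tau(\lb\tau\rb^{b}))$ from the proof of Lemma \ref{lem:interpolation} turns the modulation weight $\lb\tau+\xi^2\rb$ into $\lb\tau\rb$ acting on the time-Fourier transform of $\widehat{F}(\cdot,\xi)$. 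As this operation and the cut-off touch only the $t$-variable, the whole claim reduces, uniformly in $\xi$, to the scalar estimate for $h\in\mathcal{S}(\R)$ and $w_0(t):=\chi(t)\int_0^t h(t')\,dt'$,
\[
  \big\|\lb\tau\rb^{b}\widehat{w_0}\big\|_{L^{p'}_\tau}
  \ls \big\|\lb\tau\rb^{-1/2}\widehat{h}\big\|_{L^{2}_\tau}
     +\big\|\lb\tau\rb^{-1}\widehat{h}\big\|_{L^{1}_\tau},
\]
for the two target pairs $(b,p')=(\tfrac12,2)$ and $(b,p')=(0,1)$; the two source terms then reassemble, via Minkowski's inequality in $\ell^{r'}_\xi(\lb\xi\rb^{s})$, into $\|u\|_{X^{s,-1/2}_{r,2}}+\|u\|_{X^{s,-1}_{r,\infty}}$.

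The scalar estimate is the classical time-localization lemma of the Fourier restriction norm method (cf. \cite{GTV97}), and I would derive it from the representation
\[
  w_0(t)=\frac{1}{\sqrt{2\pi}}\int \widehat{h}(\tau)\,
    \chi(t)\,\frac{e^{it\tau}-1}{i\tau}\,d\tau ,
\]
splitting $\widehat{h}$ into its restrictions to $|\tau|\le1$ and $|\tau|>1$. On $|\tau|\le1$ the kernel $(e^{it\tau}-1)/(i\tau)$ is regular; expanding it in its Taylor series produces, after the $t$-Fourier transform, a sum of Schwartz profiles in $\tau$ with coefficients bounded by $\int_{|\tau|\le1}|\widehat{h}|$, which is dominated by either source norm. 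On $|\tau|>1$ I would split the kernel into the two terms $\chi(t)e^{it\tau}/(i\tau)$ and $-\chi(t)/(i\tau)$: the first has $t$-transform $\widehat{\chi}(\cdot-\tau)/(i\tau)$, concentrated at output modulation $\approx\tau$, and Young's inequality against the Schwartz $\widehat{\chi}$ controls it by $\|\lb\tau\rb^{-1/2}\widehat{h}\|_{L^2}$ in the $L^2_\tau$-target and by $\|\lb\tau\rb^{-1}\widehat{h}\|_{L^1}$ in the $L^1_\tau$-target; the second yields the fixed profile $\widehat{\chi}$ near output modulation $0$ times the constant $\int_{|\tau|>1}\widehat{h}(\tau)/(i\tau)\,d\tau$.

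The main obstacle is precisely the endpoint $b=\tfrac12$, which is why $Z^s_r$ is defined as the intersection $X^{s,1/2}_{r,2}\cap X^{s,0}_{r,\infty}$ and why the second summand $\|u\|_{X^{s,-1}_{r,\infty}}$ appears on the right-hand side. The difficulty is the free-evolution coefficient $\int_{|\tau|>1}\widehat{h}(\tau)/(i\tau)\,d\tau$: bounding it by the $L^2$-modulation norm $\|\lb\tau\rb^{-1/2}\widehat{h}\|_{L^2}$ would require $\lb\tau\rb^{-1/2}\in L^2(|\tau|>1)$, which fails logarithmically, whereas the $L^1$-modulation norm $\|\lb\tau\rb^{-1}\widehat{h}\|_{L^1}$ controls it at once. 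Since this contribution sits at output modulation $\approx0$, it is captured by the $X^{s,0}_{r,\infty}$-component of $Z^s_r$ rather than by $X^{s,1/2}_{r,2}$, exactly matching the structure of the statement and of the $r=2$ case referred to before the lemma. The embedding \eqref{eq:pers} of Lemma \ref{lem:cont_emb} then gives the continuity in time of the output, and the passage from the scalar bound to the $\ell^{r'}_\xi$-statement with $r'\neq2$ is routine.
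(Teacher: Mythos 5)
Your proof is correct, and while the homogeneous bound \eqref{eq:lin_hom} is handled exactly as in the paper (the Fourier transform factorizes and $\widehat{\chi}$ is Schwartz), for the Duhamel bound \eqref{eq:lin_inhom} you take a genuinely different route. The paper follows \cite{CKSTT04}, Lemma 3.1: after reducing to $u$ supported in $\{|t|\le 2\}$, it uses the identity $1_{[0,t]}(t')=\tfrac12\left(\sign(t')+\sign(t-t')\right)$, smoothly truncated via $\varphi(t')=\chi(t'/10)\sign(t')$, to split $\chi(t)\int_0^t S(t-t')u(t')\,dt'$ into a free-evolution term $F_1(t)=\tfrac12\chi(t)S(t)\int_\R \varphi(t')S(-t')u(t')\,dt'$, treated by \eqref{eq:lin_hom} plus Parseval and bounded by $\|u\|_{X^{s,-1}_{r,\infty}}$, and a time-convolution term $F_2(t)=\tfrac12\chi(t)\int_\R\varphi(t-t')S(t-t')u(t')\,dt'$, treated via the kernel bound $|\mathcal{F}_t\varphi(\tau)|\ls\lb\tau\rb^{-1}$. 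You instead follow the classical scheme of \cite{GTV97}: conjugate by the group to reduce to a scalar-in-$\tau$ lemma, then decompose the explicit kernel $(e^{it\tau}-1)/(i\tau)$ into low input modulations (Taylor expansion) and high input modulations, the latter splitting into an oscillatory convolution piece handled by Young's inequality and the constant coefficient $\int_{|\tau|>1}\widehat{h}(\tau)/(i\tau)\,d\tau$ attached to the fixed profile $\widehat{\chi}$. Both decompositions isolate the same endpoint obstruction: a free-evolution contribution of the inhomogeneity which is only controlled by $\|u\|_{X^{s,-1}_{r,\infty}}$, the $L^2$-modulation bound failing logarithmically; this is precisely why $Z^s_r$ is defined as an intersection. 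Your version makes that failure completely explicit, whereas the paper's computation hides it inside the Parseval step for $F_1$. What the paper's sign-splitting buys is brevity -- no frequency decomposition, no Taylor series, no separate scalar lemma; what your route buys is self-containedness, no preliminary support reduction on $u$, and a transparent view of the endpoint. One small point to spell out: in the Taylor step the weighted norms of the profiles $\mathcal{F}_t(t^n\chi)$ grow at most like $n\,2^n$ (the support has length $4$), so the $1/n!$ coefficients do make the series of profiles converge in both target norms.
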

\begin{proof}
  We use the approach from \cite{CKSTT04}, Lemma 3.1.  Let $u_0 \in
  C^\infty(\R)$ be periodic. We calculate $\mathcal{F}(\chi S
  u_0)(\tau,\xi)=\mathcal{F}_t\chi(\tau+\xi^2)\mathcal{F}_x u_0(\xi)$
  and \eqref{eq:lin_hom} follows because $\mathcal{F}_t\chi$ is
  rapidly decreasing.  It suffices to consider smooth $u$ with
  $\supp(u)\subset \{(t,x)\mid |t|\leq 2\}$.  We rewrite
  \begin{equation*}
    \chi(t) \int_0^t S(t-t') f(t')\, dt'=F_1(t)+F_2(t)
  \end{equation*}
  where
  \begin{align*}
    F_1(t)
    =&\frac{1}{2}\chi(t)S(t)\int_\R \varphi(t') S(-t') u(t')\, dt'\\
    F_2(t)=&\frac{1}{2}\chi(t)\int_\R \varphi(t-t')S(t-t') u(t')\, dt'
  \end{align*}
  and $\varphi(t')=\chi(t'/10)\sign(t')$. Concerning $\varphi$ we have
  \begin{equation}
    \label{eq:varphi}
    |\mathcal{F}_t\varphi(\tau)| \ls \lb\tau\rb^{-1}
  \end{equation}
  Estimate \eqref{eq:lin_hom} yields
  \begin{equation*}
    \|F_1\|_{Z^s_r} \ls
    \left\|\int_\R \varphi(t') S(-t') u(t')\, dt'\right\|_{\widehat{H}^{s}_r}
  \end{equation*}
  Parseval's equality implies
  \begin{equation*}
    \mathcal{F}_x \left(\int_\R \varphi(t') S(-t') u(t')\, dt'\right)(\xi)
    =\int_\R \overline{\mathcal{F}_t \varphi}(\tau+\xi^2) \mathcal{F}u(\tau,\xi) \,
    d\tau
  \end{equation*}
  which gives
  \begin{equation*}
    \left\|\int_\R \varphi(t') S(-t') u(t')\, dt'\right\|_{\widehat{H}^{s}_r} \ls
    \|u\|_{X^{s,-1}_{r,\infty}}
  \end{equation*}
  by \eqref{eq:varphi}. Now, let us consider $F_2$.  Due to Young's
  inequality, we may remove the cutoff function $\chi$ in front of the
  integral. The Fourier transform of the remainder is given by
  \begin{equation*}
    \mathcal{F}\left(\int_\R \varphi(t-t')S(t-t') u(t')\, dt'\right) (\tau,\xi)
    = \mathcal{F}_t \varphi(\tau+\xi^2) \mathcal{F}{u}(\tau,\xi)
  \end{equation*}
  Estimate \eqref{eq:varphi} proves \eqref{eq:lin_inhom}.
\end{proof}
A standard application of the fixed point argument gives
\begin{theorem}\label{thm:wp_gauge_dnls}
  Let $\frac{4}{3}<q \leq r\leq 2$. Then, for every
  \begin{equation*}
    v_0\in B_R:=\{ v_0 \in \widehat{H}^{\frac{1}{2},r}(\T)\mid
    \|v_0\|_{\widehat{H}^{\frac{1}{2},q}}< R\}
  \end{equation*}
  and $T \ls R^{-2q'-}$ there exists a solution
  \begin{equation*}
    v \in Z^{\frac{1}{2}}_{r}(T) \subset C([-T,T],\widehat{H}^{\frac{1}{2}}_r(\T))
  \end{equation*}
  of the Cauchy problem \eqref{eq:gauge_dnls}. This solution is unique
  in the space $Z^{\frac{1}{2}}_{q}(T)$ and the map
  \begin{equation*}
    \left(B_R, \|\cdot\|_{\widehat{H}^{\frac{1}{2},r}}\right)
    \longrightarrow C([-T,T],\widehat{H}^{\frac{1}{2},r}(\T)):
    \quad v_0 \mapsto v
  \end{equation*}
  is locally Lipschitz continuous. Moreover, it is real analytic.
\end{theorem}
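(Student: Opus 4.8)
The plan is to solve the integral equation associated with \eqref{eq:gauge_dnls} by a contraction argument in the restriction space $Z^{\frac12}_r(T)$, extracting both the lifespan and the persistence of regularity from the gains $T^\delta$ in the multilinear estimates. Writing $\mathcal N(w):=-iT(w,w,w)-\tfrac12 Q(w,w,w,w,w)$, a solution on $[-T,T]$ is a fixed point of
\[ \Phi(v):=\chi\,S v_0-i\,\chi\int_0^t S(t-t')\,\mathcal N(\chi_T v)(t')\,dt', \]
where $S(t)=e^{it\partial_x^2}$, $\chi=1$ on $[-T,T]$, and $\chi_T=\chi(\cdot/T)$ localizes the inputs of $T$ and $Q$ to $\{|t|\le 2T\}$ so that Theorems \ref{thm:tri_x1}, \ref{thm:tri_x2} and \ref{thm:quinti} apply. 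By Lemma \ref{lemma:lin_est}, estimates \eqref{eq:lin_hom}--\eqref{eq:lin_inhom}, bounding $\Phi(v)$ in $Z^{\frac12}_p:=X^{\frac12,\frac12}_{p,2}\cap X^{\frac12,0}_{p,\infty}$ reduces to the datum in $\widehat H^{\frac12}_p$ plus $\mathcal N(\chi_T v)$ measured in $X^{\frac12,-\frac12}_{p,2}\cap X^{\frac12,-1}_{p,\infty}$; the standard mapping property $\|\chi_T v\|_{X^{\frac12,\frac12}_{p,2}}\ls\|v\|_{Z^{\frac12}_p}$ of the smooth time cutoff then lets me feed $Z$-norms into the trilinear and quintilinear estimates. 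Continuity in time, $v\in C([-T,T],\widehat H^{\frac12}_r)$, is free from the embedding \eqref{eq:pers}.

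Next I would run the a priori estimate on two tiers. \emph{Lower tier} (the $q$-scale, which is the \emph{weaker} norm since $\widehat H^{\frac12}_r\subset\widehat H^{\frac12}_q$ by Remark \ref{rem:prop_sob}): taking all factors in the $q$-norm gives
\[ \|\Phi(v)\|_{Z^{\frac12}_q}\ls\|v_0\|_{\widehat H^{\frac12}_q}+T^\delta\big(\|v\|_{Z^{\frac12}_q}^3+\|v\|_{Z^{\frac12}_q}^5\big), \]
which on a ball $\|v\|_{Z^{\frac12}_q}\le 2CR$ closes exactly when $T^\delta(R^2+R^4)\ls 1$; choosing $\delta<\tfrac1{q'}$ (cubic) and $\delta<\tfrac2{q'}$ (quintic) near these thresholds, this is precisely $T\ls R^{-2q'-}$, so the lifespan is dictated by the weaker norm alone. \emph{Upper tier} (the $r$-scale): here I use that the cubic estimate carries \emph{two} factors in the $q$-norm and only one in the $r$-norm, and that \eqref{eq:quinti_a} carries, through its sum over $k$, one factor in the $r$-norm and four in the $q$-norm, whence
\[ \|\Phi(v)\|_{Z^{\frac12}_r}\ls\|v_0\|_{\widehat H^{\frac12}_r}+T^\delta\big(R^2+R^4\big)\,\|v\|_{Z^{\frac12}_r}. \]
Under the same smallness $T^\delta(R^2+R^4)\le\tfrac12$ the $r$-norm is controlled linearly; this is the persistence of regularity, the lifespan not seeing $\|v_0\|_{\widehat H^{\frac12}_r}$.

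Then I would prove the contraction in the weaker scale: telescoping $\mathcal N(v)-\mathcal N(w)$ into multilinear differences and applying the estimates with all factors in the $q$-norm yields $\|\Phi(v)-\Phi(w)\|_{Z^{\frac12}_q}\ls T^\delta(R^2+R^4)\|v-w\|_{Z^{\frac12}_q}$, a contraction once $T^\delta(R^2+R^4)\le\tfrac12$. This produces a unique fixed point $v$ in the $Z^{\frac12}_q(T)$-ball (hence uniqueness in $Z^{\frac12}_q(T)$) and the Lipschitz bound $\|v-w\|_{Z^{\frac12}_q}\ls\|v_0-w_0\|_{\widehat H^{\frac12}_q}$ for two such solutions. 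That $v\in Z^{\frac12}_r(T)$ with $\|v\|_{Z^{\frac12}_r(T)}\ls\|v_0\|_{\widehat H^{\frac12}_r}$ follows because the Picard iterates are uniformly bounded in $Z^{\frac12}_r(T)$ by the upper-tier estimate and converge in $Z^{\frac12}_q(T)$, so the limit inherits the $r$-bound by lower semicontinuity.

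Finally, for local Lipschitz dependence in the \emph{stronger} $r$-norm I would estimate $v-w$ in $Z^{\frac12}_r$ directly. In the cubic difference $T(v,v,v)-T(w,w,w)$ the term with $v-w$ in the conjugated (distinguished) slot is bounded by $T^\delta R^2\|v-w\|_{Z^{\frac12}_r}$ and absorbed, while the two terms with $v-w$ in an unconjugated slot are only controlled by the asymmetric estimate in the weaker norm, namely $T^\delta R R'\,\|v-w\|_{Z^{\frac12}_q}$ with $R':=\max(\|v_0\|_{\widehat H^{\frac12}_r},\|w_0\|_{\widehat H^{\frac12}_r})$; the quintic difference is handled the same way via the sum in \eqref{eq:quinti_a}. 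Inserting the lower-tier bound $\|v-w\|_{Z^{\frac12}_q}\ls\|v_0-w_0\|_{\widehat H^{\frac12}_q}$ and the embedding $\|v_0-w_0\|_{\widehat H^{\frac12}_q}\le\|v_0-w_0\|_{\widehat H^{\frac12}_r}$ gives $\|v-w\|_{Z^{\frac12}_r}\ls(1+T^\delta R R')\,\|v_0-w_0\|_{\widehat H^{\frac12}_r}$, i.e. local Lipschitz continuity with an $R,R'$-dependent constant. Real analyticity of $v_0\mapsto v$ then follows from the analytic implicit function theorem, as $\Phi$ is affine in $v_0$ and polynomial (degree $\le 5$) in $v$, so the contraction fixed point depends analytically on the datum. \textbf{The main obstacle} is exactly this upper-tier interplay: decoupling the lifespan from the stronger norm (persistence) and upgrading to a genuine Lipschitz bound in that norm, both of which hinge on the precise placement of $q$- versus $r$-factors in the multilinear estimates and on the embedding $\widehat H^{\frac12}_r\subset\widehat H^{\frac12}_q$ rescuing the asymmetric cubic term; note the footnote's remark that the usual convolution-constraint argument for persistence is not available in this setting.
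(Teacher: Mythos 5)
Your overall architecture is sound but genuinely different from the paper's. The paper runs the contraction in the \emph{strong} space $Z^{\frac{1}{2}}_{r}(T)$ on a time interval that is a priori small, so that \eqref{eq:diff_est} makes $\Phi$ a strict contraction there; Lipschitz dependence and real analyticity then come at once from the implicit function theorem, uniqueness in $Z^{\frac{1}{2}}_{q}(T)$ from a separate time-translation argument, and the lifespan $T\ls R^{-2q'-}$ (depending only on the weak norm) is recovered a posteriori from the mixed estimate
\begin{equation*}
  \|v\|_{Z^{\frac{1}{2}}_{r}(T)}
  \ls \|v(0)\|_{\widehat{H}^{\frac{1}{2}}_{r}}
  +T^{\frac{1}{q'}-}\|v\|^2_{Z^{\frac{1}{2}}_{q}(T)}\|v\|_{Z^{\frac{1}{2}}_{r}(T)}
  +T^{\frac{2}{q'}-}\|v\|^4_{Z^{\frac{1}{2}}_{q}(T)}\|v\|_{Z^{\frac{1}{2}}_{r}(T)}
\end{equation*}
valid for solutions, combined with an iteration (continuation) argument. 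You instead contract in the \emph{weak} space $Z^{\frac{1}{2}}_{q}(T)$ directly at the full lifespan and then upgrade: persistence via uniform $Z^{\frac{1}{2}}_{r}$-bounds on the Picard iterates plus Fatou/lower semicontinuity, and Lipschitz continuity in the $r$-topology via your two-tier difference estimate. Both schemes rest on exactly the same ingredients (Lemma \ref{lemma:lin_est}, the asymmetric placement of $q$- and $r$-factors in \eqref{eq:tri_x1}, \eqref{eq:tri_x2}, \eqref{eq:quinti_a}, and the embedding of the $r$-scale into the $q$-scale); yours trades the paper's continuation argument for the Fatou step. Your existence, persistence and Lipschitz arguments are correct as sketched.

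Two claims, however, are asserted rather than proved, and both are genuine gaps as written. (i) \emph{Uniqueness}: the contraction gives uniqueness of the fixed point only within the ball $\|v\|_{Z^{\frac{1}{2}}_{q}(T)}\leq 2CR$; the parenthetical ``hence uniqueness in $Z^{\frac{1}{2}}_{q}(T)$'' is a non sequitur, since a second solution with large $Z^{\frac{1}{2}}_{q}(T)$-norm is invisible to your argument. This is precisely what the paper's extra step supplies: given another solution $w$, apply the difference estimate on a sub-interval $[-T',T']$ with $T'$ small in terms of $\|w\|_{Z^{\frac{1}{2}}_{q}(T)}$ (restriction norms do not increase when the interval shrinks), conclude $v=w$ there, then translate in time and iterate to exhaust $[-T,T]$. (ii) \emph{Analyticity}: the implicit function theorem applied to the contraction you actually ran yields analyticity of $v_0\mapsto v$ only into the $q$-based space, with respect to the $q$-topology. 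The theorem claims analyticity of $\bigl(B_R,\|\cdot\|_{\widehat{H}^{\frac{1}{2}}_{r}}\bigr)\to C([-T,T],\widehat{H}^{\frac{1}{2}}_{r}(\T))$, and at the full lifespan the linearization $I-L$, where $L$ denotes the derivative of the nonlinear part of $\Phi$ at the solution, is \emph{not} a small perturbation of the identity on $Z^{\frac{1}{2}}_{r}(T)$: the cross terms (increment in an unconjugated slot) carry the factor $T^{\delta}RR'$, which need not be small when $R'\gg R$. This is repairable by the same two-tier device you use for the Lipschitz bound: from $\|Lh\|_{Z^{\frac{1}{2}}_{q}}\leq\theta\|h\|_{Z^{\frac{1}{2}}_{q}}$ with $\theta<1$ and $\|Lh\|_{Z^{\frac{1}{2}}_{r}}\leq\theta\|h\|_{Z^{\frac{1}{2}}_{r}}+K\|h\|_{Z^{\frac{1}{2}}_{q}}$ one gets $\|L^{n}g\|_{Z^{\frac{1}{2}}_{r}}\leq\theta^{n}\|g\|_{Z^{\frac{1}{2}}_{r}}+nK\theta^{n-1}\|g\|_{Z^{\frac{1}{2}}_{q}}$, so the Neumann series for $(I-L)^{-1}$ converges on $Z^{\frac{1}{2}}_{r}$ and the analytic implicit function theorem can be applied in the $r$-scale. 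Without (i) and this strengthening of (ii), the uniqueness and analyticity assertions of the theorem are not established.
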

\begin{proof}[Sketch of proof]
  As a consequence of the estimates \eqref{eq:lin_inhom},
  \eqref{eq:tri_x1}, \eqref{eq:tri_x2} and \eqref{eq:quinti_a}
  \begin{equation*}
    \Phi(v)(t):=\int_0^t e^{i(t-t')\partial_x^2}
    \left(-\tfrac{1}{2}\mathcal{Q}-i\mathcal{T}\right)v(t') \, dt'
  \end{equation*}
  extends to a continuous map $\Phi:Z^{\frac{1}{2}}_{r}(T)\to
  Z^{\frac{1}{2}}_{r}(T)$ for $\frac{4}{3} < r \leq 2$ along with the
  estimate
  \begin{equation}\label{eq:diff_est}
    \begin{split}
      \|\Phi(v_1)-\Phi(v_2)\|_{Z^{\frac{1}{2}}_{r}(T)} \ls &
      T^{\frac{1}{r'}-}
      \left(\|v_1\|_{Z^{\frac{1}{2}}_{r}(T)}+\|v_2\|_{Z^{\frac{1}{2}}_{r}(T)}\right)^2
      \|v_1-v_2\|_{Z^{\frac{1}{2}}_{r}(T)}\\
      +&T^{\frac{2}{r'}-}\left(\|v_1\|_{Z^{\frac{1}{2}}_{r}(T)}+\|v_2\|_{Z^{\frac{1}{2}}_{r}(T)}
      \right)^4\|v_1-v_2\|_{Z^{\frac{1}{2}}_{r}(T)}
    \end{split}
  \end{equation}
  and with \eqref{eq:lin_hom} we also have
  \begin{equation*}
    \|e^{it\partial_x^2} v_0+\Phi(v)\|_{Z^{\frac{1}{2}}_{r}(T)}
    \ls
    \|v_0\|_{\widehat{H}^{\frac{1}{2},r_1}}+T^{\frac{1}{r'}-}\|v\|^3_{Z^{\frac{1}{2}}_{r}(T)}
    +T^{\frac{2}{r'}-}\|v\|^5_{Z^{\frac{1}{2}}_{r}(T)}
  \end{equation*}
  Hence, for fixed $v_0$ the operator $ e^{it\partial_x^2} v_0+\Phi: D
  \to D $ is a strict contraction in some closed ball $D\subset
  Z^{\frac{1}{2}}_{r}(T)$ for small enough $T$. By the contraction
  mapping principle we find a fixed point $v \in
  Z^{\frac{1}{2}}_{r}(T)$ which is a solution of \eqref{eq:gauge_dnls}
  for small times. Similarly, the implicit function theorem shows that
  the map $v_0 \mapsto v$ is real analytic, hence locally Lipschitz.
  Uniqueness in $Z^{\frac{1}{2}}_{q}(T)$ follows by contradiction: A
  translation in time reduces matters to uniqueness for an arbitrarily
  short time interval which follows from the estimate
  \eqref{eq:diff_est} with $r=q$.  The lower bound on the maximal time
  of existence is a consequence of the mixed estimate
  \begin{equation*}
    \|v\|_{Z^{\frac{1}{2}}_{r}(T)}
    \ls \|v(0)\|_{\widehat{H}^{\frac{1}{2}}_{r}}
    +T^{\frac{1}{q'}-}\|v\|^2_{Z^{\frac{1}{2}}_{q}(T)}\|v\|_{Z^{\frac{1}{2}}_{r}(T)}
    +T^{\frac{2}{q'}-}\|v\|^4_{Z^{\frac{1}{2}}_{q}(T)}\|v\|_{Z^{\frac{1}{2}}_{r}(T)}
  \end{equation*}
  for solutions $v$ and an iteration argument.
\end{proof}
By combining Theorem \ref{thm:wp_gauge_dnls} with Lemma
\ref{lemma:gauge_prop} and some approximation arguments Theorem
\ref{thm:main} follows (this is carried out in detail in \cite{G05}
for the nonperiodic case).
\begin{remark}\label{rem:sharp_uni}
  In fact, our estimates also imply uniqueness of solutions of
  \eqref{eq:gauge_dnls} in a restriction space based on the
  $X^{\frac{1}{2},\frac{1}{2}}_{q,2}$ component only.  Hence, the
  optimal uniqueness statement concerning solutions of \eqref{eq:dnls}
  provided by our methods is the following: Let $4/3<q\leq 2$ and
  $u_1,u_2 \in C([-T,T],\widehat{H}^{\frac{1}{2}}_q(\T))$ be solutions
  of \eqref{eq:dnls} with $u_1(0)=u_2(0)$. If additionally
  $\mathcal{G}u_1,\mathcal{G}u_2 \in
  X^{\frac{1}{2},\frac{1}{2}}_{q,2}$ which also satisfy the equation
  \eqref{eq:gauge_dnls}, then $u_1=u_2$.
\end{remark}
\bibliographystyle{hplain}
\bibliography{literatur_dnls}\label{sect:refs}

\end{document}